\newcommand{\cit}[1]{{\rm \textbf{#1}}}
\newcommand{\Ref}[2]{\cit{%
\ifthenelse{\equal{#1}{thm}}{Theorem}{}%
\ifthenelse{\equal{#1}{prop}}{Proposition}{}%
\ifthenelse{\equal{#1}{lem}}{Lemma}{}%
\ifthenelse{\equal{#1}{cor}}{Corollary}{}%
\ifthenelse{\equal{#1}{defn}}{Definition}{}%
\ifthenelse{\equal{#1}{oss}}{Remark}{}%
\ifthenelse{\equal{#1}{sec}}{Section}{}%
\ifthenelse{\equal{#1}{ex}}{Example}{}%
\ifthenelse{\equal{#1}{conj}}{Conjecture}{}%
\ifthenelse{\equal{#1}{ssec}}{Subsection}{}%
\ifthenelse{\equal{#1}{tab}}{Table}{}%
\ifthenelse{\equal{#1}{cla}}{Claim}{}%
\  \ref{#1:#2}%
}}
\theoremstyle{plain} 
\newtheorem{prop}{Proposition}[section]
\newtheorem{thm}[prop]{Theorem}
\newtheorem{lem}[prop]{Lemma} 
\newtheorem{cor}[prop]{Corollary}
\newtheorem{rem}[prop]{Remark}
\theoremstyle{remark}
\theoremstyle{definition}
\newcommand{\hk}{hyperk\"{a}hler }
\newcommand{\kahl}{K\"{a}hler }
\newcommand{\kntipo}{$K3^{[n]}$ type}
\newcommand{\Kv}{{K_v}}
\newcommand{\Kvt}{\wt{K}_v}
\newcommand{\Z}{\mathbb{Z}}
\newcommand{\C}{\mathbb{C}}
\renewcommand{\P}{\mathbb{P}}
\newcommand{\mc}{\mathcal}
\newcommand{\Zt}{\widetilde{Z}}
\newcommand{ \ff} { \frac }
\newcommand{\be}{\begin{equation}}
\newcommand{\ee}{\end{equation}}
\renewcommand{\phi}{\varphi}
\newcommand{\wt}{\widetilde}
\DeclareMathOperator{\rk}{rk}
\DeclareMathOperator{\Hom}{Hom}
\DeclareMathOperator{\Pic}{Pic}
\DeclareMathOperator{\im}{Im}
\DeclareMathOperator{\Sym}{Sym}
\DeclareMathOperator{\alb}{alb}
\begin{document}

\title{The Hodge diamond of O'Grady's $6$--dimensional example}

\author{Giovanni Mongardi}
\address{Dipartimento di Matematica, Universit\`{a} degli studi di Milano,  Via Cesare Saldini 50, Milano 20133, Italia  }

\author{Antonio Rapagnetta}
\address{Dipartimento di Matematica, Universit\`a di Roma Tor Vergata, Via della ricerca scientifica 1, Roma  00133, Italia}

\author{Giulia Sacc\`{a}}
\address{Department of Mathematics, Stony Brook University, Stony Brook, NY 11974-3651}

\begin{abstract} We realize O'Grady's six dimensional example of irreducible
holomorphic symplectic manifold as a quotient of an IHS manifold of K3$^{[3]}$--type by a birational involution, thereby computing its Hodge numbers.

\end{abstract}
\keywords{Keywords: Irreducible holomorphic symplectic manifolds, Hodge numbers, O'Grady's six dimensional manifold \\ MSC 2010 classification Primary 14J40; Secondary 14E07, 14F05}

\maketitle

\section{Introduction}

In this paper we present a new way of obtaining O'Grady's six dimensional 
example of irreducible holomorphic symplectic manifold and use this to compute 
its Hodge numbers. Further applications, such as the description of the 
movable cone or the answer to Torelli--type questions for this deformation class of 
irreducible holomorphic symplectic manifolds, will be the topic of a subsequent paper.

Recall that an irreducible holomorphic symplectic manifold (IHS) is a simply connected compact 
K\"ahler manifold that has a unique up to scalar holomorphic symplectic form. 
They arise naturally as one of the three building blocks of manifolds with 
trivial first Chern class according to the Beauville--Bogomolov decomposition \cite{Bogo}, \cite{Beauville}, 
the other two blocks being Abelian varieties and Calabi-Yau manifolds. 
By definition, IHS manifolds are higher dimensional generalizations of K3 surfaces, 
moreover they  have a canonically defined quadratic form on their integral 
second cohomology group, which allows to speak of their periods and to develop their theory 
in a way which is analogous to the theory of $K3$ surfaces. 
The interested reader can see \cite{huy_basic} and \cite{og_k3} for a general introduction on the topic.

There are two deformation classes of IHS manifolds in every even dimension greater or equal to $4$, introduced by Beauville  in \cite{Beauville}. 
They are the Hilbert scheme of $n$ points on a K3 and the generalized Kummer variety of 
dimension $2n$ of an abelian surface (i.e. the Albanese fiber of the Hilbert scheme of $n+1$ 
points of the abelian surface). Elements of these two deformation classes have second Betti 
number equal to $23$ and $7$, respectively, and are referred to as IHS  manifolds 
of K3$^{[n]}$--type and of generalized Kummer type, respectively.
There are two more examples, found by O'Grady in  \cite{OG1} and \cite{OG2}, 
of dimension ten and six, respectively, which are obtained from  a symplectic resolution 
of some singular moduli spaces of sheaves on a K3 surface and on an 
abelian surface, respectively. They are referred to as the exceptional examples 
of IHS, and their deformation classes are denoted by OG10, respectively OG6.

These exceptional examples have not been studied as much and their geometries are less understood. Though their topological Euler 
characteristic is known,  see \cite{rap_phd} and \cite{mozo}, 
even other basic invariants such as their Hodge numbers have not been computed yet. 
In the case of manifolds of K3$^{[n]}$--type, the Hodge numbers were computed by G\"ottsche \cite{got_hod}. 
 
One of the main results of this paper is to realize O'Grady's six dimensional example 
as a quotient of an IHS manifold of K3$^{[3]}$--type by a birational symplectic involution: 
we therefore relate this deformation class to the most studied deformation class of IHS manifolds
and this allows us, by resolving the indeterminacy locus of the involution 
and by describing explicitly its fixed locus (which has codimension $2$), 
to compute the Hodge numbers. The involution we use was first introduced in \cite{rap_phd} to compute 
the Beauville-Bogomolov form for IHS of type OG6 and then used in \cite{mw} 
to determine a special subgroup of the automorphisms group of such manifolds. 

Recently, there has been
considerable interest in exhibiting and classifying symplectic automorphisms of 
IHS manifolds \cite{bs}, \cite{hm}, \cite{gregorio} and \cite{mon_auto}. 
Notice that quotients of IHS by symplectic automorphisms rarely admit a symplectic 
resolution since for this to happen the fixed locus has to be of codimension $2$ 
(see \cite{kawat} for one of the few cases where this happens). 
Our construction, however, indicates that ``quotients'' by \emph{birational} symplectic automorphisms 
can have a symplectic resolution, and thus they are potentially interesting. 
In upcoming work, we will study some of these birational morphisms 
for manifolds of K3$^{[n]}$--type.

Recall that O'Grady's six dimensional example is obtained as a symplectic resolution of a certain 
natural subvariety of a moduli space of sheaves on an abelian surface $A$. 
In order to describe how to obtain it as a ``quotient'' of another 
IHS by a birational symplectic automorphism, we first need to introduce some notation. 

Let $X$ be a K3 or an abelian surface. 
Fix an effective  Mukai vector\footnote{$v$ is effective if it is the Mukai vector of a coherent sheaf on $X$.} 
$v \in H^*_{alg}(X, \Z)$, with $v^2 \ge -2$, and let $H$ be a sufficiently general ample line bundle on $X$. 
It is well known \cite{mukai}, \cite{yoshioka}  that, if $v$ is primitive,  
the moduli space $M_{v}(X, H)$ of $H$--stable sheaves on $X$ with Mukai vector $v$ 
is a smooth projective manifold of dimension $v^2+2$ and that, if $v^2 \ge 0$, it admits a 
holomorphic symplectic form. If $X$ is a K3 surface, then $M_{v}(X, H)$ 
is an IHS variety of K3$^{[n]}$--type, for $n=v^2/2+1$. 
Whereas, if $X=A$  is an abelian surface and if $v^2 \ge 4$,  
there is a non trivial Albanese variety and, 
in order to get an \emph{irreducible} holomorphic symplectic manifold, one needs to consider a fiber
\be 
K_v(A, H):=\alb^{-1}( 0),
\ee
of the Albanese morphism (which is isotrivial)
\[
\alb : M_v(A, H) \to  A\times A^{\vee}.
\]
Recall that if  $v^2 \ge 6$, $K_v(A, H)$ is deformation equivalent 
to the generalized Kummer variety $K^{[n]}(A):=\sum^{-1}(0)$, 
where $\sum: A^{[n+1]} \to A$ is the summation morphism. 

If we consider an $H$--stable sheaf $F$ with a primitive Mukai vector $v_0$, 
then for $m \ge 2$, the sheaf $F^{\oplus m}$ is strictly $H$--semistable. 
Hence if we set $v=m v_0$, this sheaf determines a singular point of the moduli space $M_{v}(X, H)$, 
whose smooth locus still carries a holomorphic symplectic form. 
In \cite{OG1} and \cite{OG2}, O'Grady considered the case of $v_0=(1, 0, -1)$ and $m=2$, 
and showed that the singular symplectic variety $M_{v}(X, H)$ admits a symplectic resolution
$\wt{M}_{v}(X, H).$ 
For $X$ K3, this resolution gives a $10$--dimensional IHS manifold of type OG10. 
For $X=A$, fix $F_{0}\in M_{v}(A, H)$ and denote by $K_{v}(A, H)$ the fiber over $0$ of the isotrivial fibration
\begin{equation} \label{Kv}
\begin{array}{cccc}
\mathbf{a}_{v}: & M_{v}(A, H) & \longrightarrow & A\times A^{\vee} \\
& F & \longmapsto & (Alb(c_{2}(F)),\det(F)\otimes \det(F_{0})^{-1})
\end{array}
\end{equation}
where $Alb : CH_{0}(A)\rightarrow A$ is the Albanese homomorphism. The proper transform $\wt{K}_{v}(A, H)$ of $K_{v}(A, H)$ in $ \wt {M}_{v}(X, H)$
is smooth and the induced map  
\be \label{f}
f_{v}: \wt{K}_{v}(A, H) \to K_{v}(A, H),
\ee
 is a symplectic resolution. The gives the $6$--dimensional IHS  $\wt{K}_{v}(A, H)$, whose deformation type is called OG6,   that is the object of this paper. 

Lehn and Sorger proved in \cite{ls} that for \emph{any} primitive $v_0$, with $v_0^2=2$, 
the moduli space ${M_{2v_0}(X, H)}$ admits a symplectic resolution. 
Finally, Perego and the second named author  \cite{per_rap}  showed that for any choice of $v_0$, 
with $v_0^2=2$, on a $K3$ or abelian surface, 
the IHS manifolds that one gets are deformation equivalent to OG10 and OG6, respectively. 

When $A$ is a general principally polarized abelian surface and $M_{v}(X, H)$ parametrizes pure $1$--dimensional sheaves, the IHS manifold $\wt{K}_{v}(A, H)$
is the image of a degree $2$ rational map whose domain is an IHS manifold of K3$^{[3]}$--type as we now  briefly sketch. 

Let us  consider a principal polarization $\Theta \subset A$. 
The Mukai vector $v_0=(0, \Theta, 1)$ satisfies $v_0^2=2$, and hence, 
if we set $v=2v_0$, there is symplectic resolution $\wt K_v \to K_v$ that is deformation equivalent to OG6. 
There is a natural support morphism $K_v \to |2\Theta|=\mathbb P^3$, realizing $K_v$ as a Lagrangian fibration. By definition of $K_v$, 
the fiber over a smooth curve $C \in |2\Theta|$ is the kernel of the natural morphism 
$\Pic^{6}(C) \to A$ (which is also the restriction of  $\mathbf{a}_{v}$  to $\Pic^{6}(C)\subset M_{v}(A, H)$).

It is well known that the morphism associated to the linear system $|2\Theta|$ 
is the quotient morphism $A \to A \slash \pm 1 \subset \mathbb P^3$ 
onto the singular Kummer surface of $A$. Let $S \to A \slash \pm 1$ be 
the minimal resolution of $A$. It is  well known that $S$, the Kummer surface of $A$, 
is a K3 surface. Notice that $S$ come naturally equipped with the 
degree $4$ nef line bundle $D$ obtained by pulling back the 
hyperplane section of $A \slash \pm 1 \subset \mathbb P^3$. Consider the diagram
\[
q: A \to A/ \pm 1 \subset \P^3
\]
\be \label{diagramma SKA}
\xymatrix{
\wt A \ar[d]_b \ar[r]^a & S \ar[d]^p \\
A \ar[r]_{q\phantom{mmm}} & A /\pm 1
}
\ee
where $\wt A$ is the blow up of $A$ at its $16$ $2$--torsion points or, equivalently, 
the ramified cover of $S$ along the exceptional curves $E_1, \dots, E_{16}$ of $p$. 
Consider the moduli space $M_w(S)$ of sheaves on $S$ with Mukai vector $w=(0, D, 1)$ that are 
stable with respect to a choosen, sufficiently general, polarization. 
This is an IHS manifold  birational to the Hilbert cube of $S$ 
and it has a natural morphism $M_w(S) \to |D|=\mathbb P^3$ realizing it as the 
relative compactified Jacobian of the linear system $|D|$ (also a Lagrangian fibration).

The morphisms in diagram \ref{diagramma SKA} induce a rational generically $2:1$ map
\[
b_* a^*=q^* p_*: M_w(S) \dashrightarrow M_{v}(A,H).
\]
Since $M_w(S)$ is simply connected, the image of this map lies in a fiber of $\mathbf{a}_{v}$, 
giving a $2:1$ morphism $ \Phi: M_w(S) \dashrightarrow K_{v}(A,H)$
. 

On the smooth fibers, this maps restricts to the natural $2:1$  pull back 
morphism $\Pic^3(C') \to \Pic^6(C)$, whose image is precisely  $\ker[\Pic^6(C) \to A]$.  
Recall that $\sum_i E_i$ is divisible by $2$ in $H^2(S, \Z)$ and that the line bundle 
$\eta:=\mc O_S( \ff{1}{2} \sum E_i)$ determines the double cover $q$.  
It follows that  the involution on $M_w(S)$ corresponding to $\Phi$ is given by tensoring 
by $\eta$ and $\wt{K}_{v}(A,H)$ is a birational model of the ``quotient'' of $M_w(S)$
by the birational involution induced by tensorization by  $\eta$.



In this paper,
for any Abelian surface $A$ and for an effective Mukai vector $v=2v_0$ with $v_0^2=2$ on $A$, 
we show that $\wt{K}_{v}(A,H)$ admits a rational double cover from  an IHS manifold $\underline{Y}_{v}(A,H)$ of 
K3$^{[3]}$--type. 
Recall that the singular locus $\Sigma_v \subset K_{v}(A, H)$ has codimension $2$ and 
can be identified with $A \times A^\vee \slash \pm1$ (for more details, see Section \ref{sec:reso}).
Following  \cite{OG2}, the symplectic resolution (\ref{f}) can be  obtained by two subsequent blow ups followed by a contraction: 
first one blows up the singular locus of $\Sigma_v $, then one blows up the proper transform of $\Sigma_v$ itself (which is smooth); 
 these two operations produce a manifold $\widehat{K}_{v}(A,H)$ that has a holomorphic 
two form degenerating along the strict transform of the exceptional divisor of 
the first blow up; contracting this exceptional divisor finally gives the manifold 
$\widetilde{K}_{v}(A, H)$ that has non--degenerate (hence symplectic) two form and a 
regular morphism $\wt K \to K$ which is, therefore, a symplectic resolution. 
The inverse image $\widehat{\Sigma}$ of $\Sigma_v$ in  $K_{v}(A, H)$ is a smooth divisor,  
which is divisible by two in the integral cohomology by results of the second named author \cite{rap_phd}. We show that the associated ramified double cover is a smooth manifold
birational to an IHS manifold of K3$^{[3]}$--type, which we denote by $\underline{Y}_{v}(A,H)$ and which is equipped with a birational symplectic  involution. 

This enable us to reconstruct $\widetilde{K}(A, H)$ starting from $\underline{Y}_{v}(A,H)$,  and its symplectic  birational  involution 
\[\underline{\tau}_{v}:\underline{Y}_{v}(A,H)\rightarrow \underline{Y}_{v}(A,H).\]
More specifically, $\underline{Y}_{v}(A,H)$
contains 256 $\P^3$s,  the birational involution $\underline{\tau}_{v}$ is regular on the complement of these $\P^3$s, and, moreover, this involution lifts to a regular involution on the blow up $\overline{Y}_{v}(A,H)$ of $\underline{Y}_{v}(A,H)$ along the 256 $\P^3$s. 
The fixed locus of the induced involution on $\overline{Y}_{v}(A,H)$  is  smooth and four dimensional, 
hence the blow up $\widehat{Y}_{v}(A,H)$ of $\overline{Y}_{v}(A,H)$ along this fixed locus carries an 
involution $\widehat{\tau}_{v}$ admitting a smooth quotient  $\widehat{Y}_{v}(A,H)\slash \widehat{\tau}_{v}$.
This quotient is  $\widehat{K}_{v}(A,H)$ and  $\widehat{Y}_{v}(A,H)$ is its double cover branched over $\widehat{\Sigma}_{v}$. Finally 
$\widehat{K}_{v}(A,H)$ is  the blow up of $\wt{K}_{v}(A,H)$ along $256$ smooth $3$--dimensional quadrics.

This construction allows to relate the Hodge numbers of $\wt{K}_{v}(A,H)$ to the  
invariant Hodge numbers of $\underline{Y}_{v}(A,H)$. Finally, the  invariant Hodge numbers of 
$\underline{Y}_{v}(A,H)$ may be determined by using monodromy results of Markman \cite{mar}. 
This  yields  our main   result:

\begin{thm} Let $\wt K$ be an irreducible holomorphic symplectic of type OG6. The odd Betti numbers of $\wt K$ are zero, and its non--zero Hodge numbers are collected in the following table:
\[
\begin{array}{ccccccl}
    &   &    &    H^{0,0}=1 &  & & \\
   &   &  H^{2,0}=1  &    H^{1,1}=6 & H^{0,2}=1 & & \\
     & H^{4,0}=1 &  H^{3,1}=12  &  H^{2,2}=173 & H^{1,3}=12 & H^{0,4}=1 & \\
  H^{6,0}=1 & H^{5,1}=6 & H^{4,2}=173 &  H^{3,3}=1144 & H^{2,4}=173 & H^{1,5}=6 & H^{0,6}=1 \\
    & H^{6,2}=1 &  H^{5,3}=12  &  H^{4,4}=173 & H^{3,5}=12 & H^{2,6}=1 & \\
   &   &  H^{6,4}=1  &    H^{5,5}=6 & H^{4,6}=1 & & \\
   &   &    &   H^{6,6}=1 .&  & & \\
\end{array}
\]
\end{thm}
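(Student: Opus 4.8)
\emph{Plan.}
Since Hodge numbers are a deformation invariant and, by \cite{per_rap}, the manifold $\wt{K}_{v}(A,H)$ produced by our construction (for any Abelian surface $A$ and any effective $v=2v_{0}$ with $v_{0}^{2}=2$) is of type OG6, it suffices to establish the table for $\wt K=\wt{K}_{v}(A,H)$. The strategy is to run one cohomological computation along the chain of varieties and maps constructed in the body of the paper,
\[
\underline{Y}_{v}(A,H)\ \xleftarrow{\ \beta\ }\ \overline{Y}_{v}(A,H)\ \xleftarrow{\ \gamma\ }\ \widehat{Y}_{v}(A,H)\ \xrightarrow{\ \pi\ }\ \widehat{K}_{v}(A,H)\ \xrightarrow{\ \rho\ }\ \wt{K}_{v}(A,H),
\]
where $\beta$ is the blow up of the $256$ copies of $\mathbb{P}^{3}$, $\gamma$ is the blow up of the smooth fourfold $F=\Fix(\overline{\tau}_{v})$, $\pi$ is the quotient by $\widehat{\tau}_{v}$ (equivalently, the double cover of $\widehat{K}_{v}$ branched along the fixed divisor $\widehat{\Sigma}_{v}$ of $\widehat{\tau}_{v}$, which, as the exceptional divisor of $\gamma$, is a $\mathbb{P}^{1}$--bundle over $F$), and $\rho$ is the blow up of the $256$ smooth three--dimensional quadrics. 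Each arrow is either a blow up along a smooth centre or a quotient by an involution, so each induces an explicit operation on rational Hodge structures; since all the cohomology groups that occur will be seen to have vanishing odd part, the odd Betti numbers of $\wt K$ vanish.

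\emph{The four inputs.}
First I would record the Hodge numbers of $\underline{Y}_{v}(A,H)$: being of K3$^{[3]}$--type they are given by G\"ottsche's formula \cite{got_hod}. Second, the blow up formula for rational Hodge structures expresses $h^{p,q}(\overline{Y}_{v})$ in terms of $h^{p,q}(\underline{Y}_{v})$ and the (trivial) Hodge numbers of $\mathbb{P}^{3}$, and $h^{p,q}(\widehat{Y}_{v})$ in terms of $h^{p,q}(\overline{Y}_{v})$ and the Hodge numbers of $F$; here one uses the explicit description obtained in Section~\ref{sec:reso}, according to which $F$ is a resolution of $(A\times A^{\vee})/\{\pm1\}$, so that $h^{\bullet,\bullet}(F)$ is read off from the $(-1)$--invariant part of $H^{*}(A\times A^{\vee})$ together with the contribution of the $256$ exceptional loci. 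Third, $\pi$ being a quotient by an involution, $H^{p,q}(\widehat{K}_{v})=H^{p,q}(\widehat{Y}_{v})^{\widehat{\tau}_{v}}$. Fourth, the blow up formula applied to $\rho$ gives $h^{p,q}(\wt K)$ in terms of $h^{p,q}(\widehat{K}_{v})$ and the elementary Hodge numbers of a three--dimensional quadric. Feeding these into one another, everything reduces to computing the $\widehat{\tau}_{v}$--invariant Hodge numbers.

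\emph{The invariant Hodge numbers.}
The classes introduced by $\gamma$ are supported on its exceptional divisor, which is pointwise fixed by $\widehat{\tau}_{v}$; hence they are all $\widehat{\tau}_{v}$--invariant, and similarly $\widehat{\tau}_{v}^{*}$ acts on the contribution of the $256$ exceptional divisors of $\beta$ through the permutation that $\overline{\tau}_{v}$ induces on the $256$ planes, which is determined by the birational geometry of $\underline{\tau}_{v}$. Thus everything comes down to the invariant cohomology $H^{p,q}(\underline{Y}_{v})^{\underline{\tau}_{v}}$ of the K3$^{[3]}$--type manifold. For this I would use that the birational involution $\underline{\tau}_{v}$ acts on the full cohomology of $\underline{Y}_{v}$ compatibly with its Looijenga--Lunts--Verbitsky (LLV) module structure (and that on the smooth model $\overline{Y}_{v}$ this action agrees, on the subspace $\beta^{*}H^{*}(\underline{Y}_{v})$, with the genuine action of $\overline{\tau}_{v}^{*}$), so that its action in every degree is determined by its action on $H^{2}(\underline{Y}_{v})$ --- the explicit Hodge isometry $\underline{\tau}_{v}^{*}|_{H^{2}}$ computed in \cite{rap_phd} and \cite{mw}. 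Combining this with the decomposition of $H^{*}(\mathrm{K3}^{[3]})$ into irreducible modules over the LLV algebra $\mathfrak{so}(4,20)$ --- which by Markman's monodromy results \cite{mar} is deformation invariant (hence computable on one model), and which, carrying naturally a Hodge structure, refines to a decomposition respecting the Hodge filtration --- one obtains $\tr\big(\underline{\tau}_{v}^{*}\mid H^{p,q}(\underline{Y}_{v})\big)$ in every bidegree, and hence $\dim H^{p,q}(\underline{Y}_{v})^{\underline{\tau}_{v}}=\tfrac12\big(h^{p,q}(\underline{Y}_{v})+\tr(\underline{\tau}_{v}^{*}\mid H^{p,q}(\underline{Y}_{v}))\big)$.

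\emph{Assembling, and the main obstacle.}
Running the four computations through the chain produces the asserted table; the symmetries $h^{p,q}=h^{q,p}$ and $h^{p,q}=h^{6-p,6-q}$, the value $h^{1,1}=6$ forced by the known Beauville--Bogomolov lattice of OG6 (so $b_{2}=8$), the existence of the holomorphic symplectic form ($h^{2,0}=1$), and the total Euler number matching \cite{rap_phd}, \cite{mozo} all serve as consistency checks on the bookkeeping. I expect the main obstacle to be precisely the invariant--cohomology step: making rigorous that a merely birational symplectic involution acts on all of $H^{*}(\underline{Y}_{v})$ through the LLV representation and that this action is the one realized by $\overline{\tau}_{v}^{*}$ on the smooth model, and then carrying out the representation--theoretic trace computation for $\mathrm{K3}^{[3]}$ bidegree by bidegree. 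The explicit identification and Hodge--theoretic description of the fourfold $F$, and the determination of the $\overline{\tau}_{v}$--action on the $256$ planes, are secondary difficulties.
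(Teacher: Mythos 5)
Your outline reproduces the architecture of the paper's proof: the same chain $\underline{Y}_{v}\leftarrow\overline{Y}_{v}\leftarrow\widehat{Y}_{v}\rightarrow\widehat{K}_{v}\rightarrow\widetilde{K}_{v}$, blow-up formulae for the Hodge structures, the identification $H^{p,q}(\widehat{K}_{v})=H^{p,q}(\widehat{Y}_{v})^{\widehat{\tau}_{v}}$, and the propagation of a weight--two action to $H^{4}$ and $H^{6}$ via Markman's decomposition (your LLV formulation is equivalent for this purpose to the formulae \eqref{markmans decomposition} used in Lemma \ref{4fin}). However, there is a genuine gap at exactly the step you defer as ``the main obstacle''. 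The quantity $H^{p,q}(\underline{Y}_{v})^{\underline{\tau}_{v}}$ is not defined as it stands: $\underline{\tau}_{v}$ is only birational (a Mukai flop along the $256$ copies of $\P^{3}$, Remark \ref{rmk:ratinv}), and the operator one must use is the monodromy operator $m(\underline{\tau}_{v})$ of Remark \ref{monod}, which differs from the correspondence given by the closure of the graph by the cycles $J_{i,v}\times J_{i,v}$. Relating the honest invariants $H^{*}(\overline{Y}_{v})^{\overline{\tau}_{v}}$ on the smooth model to the $m(\underline{\tau}_{v})$--invariants on $\underline{Y}_{v}$ is precisely the content of Lemma \ref{3fin}: one needs that each class $[J_{i,v}]$ is $m(\underline{\tau}_{v})$--\emph{anti}invariant (proved there via Fulton's key formula \cite{ful}, the computation $[J_{i,v}\times J_{i,v}]^{*}[J_{i,v}]=-4[J_{i,v}]$ coming from the normal bundle being the cotangent bundle of $\P^{3}$, integrality of the monodromy operator, and an ampleness argument using \cite{huykc} to exclude the eigenvalue $+1$), together with the splitting $H^{6}(\underline{Y}_{v})=H^{0}(J_{v})^{\perp}\oplus H^{0}(J_{v})$ and the identification of $\overline{\tau}_{v}^{*}$ on $H^{4}(\overline{\Gamma}_{v})^{\perp}$ with $m(\underline{\tau}_{v})$ on $H^{0}(J_{v})^{\perp}$. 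Without this, the correction terms $+256$ and $+512$ in Lemmas \ref{Kt and Ktt}--\ref{3fin} cannot be justified and the final count ($h^{2,2}=173$, $h^{3,3}=1144$) does not come out.

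Two further points. First, your bookkeeping of the classes introduced by $\beta$ is imprecise: $\overline{\tau}_{v}$ does not act through a ``permutation of the $256$ planes'' (indeed $J_{v}$ is the indeterminacy locus of $\underline{\tau}_{v}$); it preserves each exceptional component $I_{i,v}\subset\P(V)\times\P(V)$ and exchanges the two $\P^{2}$--fibrations, so only part of the exceptional contribution is invariant --- this factor swap is exactly what produces the asymmetric corrections in Lemma \ref{3fin} and cannot be replaced by ``all exceptional classes are invariant''. Second, your weight--two input differs from the paper's: rather than quoting an explicit Hodge isometry $\underline{\tau}_{v}^{*}|_{H^{2}}$ from \cite{rap_phd} and \cite{mw} (it is not clear those references contain it in the form you need), the paper bootstraps the invariant weight--two numbers $(h^{2,0})^{+}=1$, $(h^{1,1})^{+}=5$ backwards through the chain of lemmas from O'Grady's $b_{2}(\widetilde{K}_{v})=8$ and $h^{2,0}(\widetilde{K}_{v})=1$ \cite{OG2}, and only then applies Markman's result \cite{mar}; if you keep your sourcing you must verify that input independently, and in any case you would still need the Lemma \ref{3fin} argument above to transport it.
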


As a corollary, we also get the Chern numbers of this sixfold, see Proposition \ref{prop:chern} for details.

We should point out that this construction \emph{cannot} be carried out for IHS manifolds of type OG10, 
since the exceptional divisor of the second blow up 
(the procedure to obtain the symplectic resolution is the same) is \emph{not} divisible by $2$ in the integral cohomology.

The structure of the paper is as follows. In Section \ref{sec:reso}, 
we recall local and global properties of O'Grady's and Lehn--Sorger symplectic resolution. 
In Section \ref{local}, we construct an affine double of  the Lehn--Sorger local model of the 
deepest stratum of the singularity of $K_{v}(A,H)$, branched over the singular locus. In Section \ref{global}, we  globalize the previous results 
to construct global double covers $Y_{v}$ of $K_{v}(A,H)$ branched  over the singular locus.
In Section \ref{birgeom}, we prove that $Y_{v}$ is birational 
to an IHS manifold of K3$^{[3]}$--type. Finally, in Section \ref{sec:hodge}, 
we use the previous results to compute the Hodge numbers.

\bigskip
\subsection*{Notations}
For a closed embedding $X_1\subset X_2$ of algebraic  algebraic varieties  
we denote with $Bl_{X_1}X_2$ the blowup of $X_2$ along $X_1$.\\
For any affine cone or vector bundle  $X_3$,  we denote with $\mathbb{P}(X_3)$ its projectification.\\ 
Finally we denote by $H^{k}(X_{1})$ the $k$--th singular cohomology group of $X_1$ with rational coefficient.
 
\subsection*{Acknowledgments} We wish to thank Kieran O'Grady for useful discussions. 
The first two named authors are supported by FIRB 2012 ``Spazi di moduli ed applicazioni''.


\section{The resolution}\label{sec:reso}

Let us fix  a primitive Mukai vector $v_0 \in H^*_{alg}(A, \Z)$ with $v_0^2=2$,  set
$v=2v_0$, and consider a v-generic ample line bundle  $H$ on $A$ (see Section 2.1 of
\cite{per_rap}). 
By \cite[Th\'eor\`eme  1.1]{ls} the projective variety $\Kv:=K_v(A, H)$ admits a
simplectic resolution $\Kvt$ which is deformation equivalent to O'Grady's six dimensional example by \cite[Theorem 1.6(2)]{per_rap}. In this section we recall the description of the
singularity of $\Kv$ and of the symplectic resolution $f: \Kvt \to \Kv$ following both the papers of O'Grady
\cite{OG1}, \cite{OG1}
and Lehn and Sorger \cite{ls}.

Since the singular locus $\Sigma_{v}$ of $\Kv$ parametrizes polystable sheaves of the form
$F_1 \oplus F_2$, with $F_{i}\in M_{v_{0}}(X, H)$, we have 
$\Sigma_{v}=\Kv\cap \Sym^2 M_{v_0}(A, H)$. Since  $v_0^2=2$ the smooth moduli space 
$M_{v_{0}}$ is isomorphic to $A\times A^{\vee}$ and, as the Albanese map $\alb$ is an isotrivial
fibration,
the singular locus $\Sigma_{v}$ is isomorphic to $(A \times A^\vee) \slash \pm 1$.
This also implies that the singular locus $\Omega_{v}$ of $\Sigma_{v}$ consists of
$256$
points representing sheaves of the form $F^{\oplus2}$ with $F\in M_{v_{0}}(X, H)$.

The analytic type of the singularities appearing in  $\Kv$ is completely understood.
If $p\in \Sigma_{v}\setminus \Omega_v$, i.e. $p$ represents a polystable sheaf of
the form $F_1\oplus F_2$ where $F_1\ne F_2$, 
there exists a neighborhood $U\subset \Kv$ of $p$,  in the classical topology,
biholomorphic to a neighborhood of the origin in the
hypersurface defined in 
$\mathbb{A}^{7}$ by the equation $\sum_{i=1}^{3}x_{i}^{2}=0$ (see for example \cite[Prop. 4.4]{argiulia} or
\cite[Prop. 1.4.1]{OG1}), 
i.e. $\Kv$ has an $A_{1}$ singularity along $\Sigma_{v}\setminus
\Omega_v$.   

If $p\in\Omega_v$, the description of the analytic type of the singularity of $\Kv$ at
$p$ is due to Lehn and Sorger and it is contained in
\cite[Th\'eor\`eme 4.5.]{ls}. To recall this description, let $V$ be a four
dimensional vector space, let $\sigma$ be a symplectic form on $V$, and let
$\mathfrak{sp}(V)$ be the symplectic Lie algebra of $(V,\sigma)$, i.e. the Lie
algebra  of the Lie group of the automorphisms of $V$  preserving  the symplectic
form $\sigma$.
 
We let
\[
Z:=\{ A \in  \mathfrak{sp}(V) \,\, | \,\, A^2=0 \}
\]
be the subvariety  of matrices in $\mathfrak{sp}(V)$ having square zero. It is known
that $Z$ is the closure of the nilpotent orbit of type $\mathfrak o(2,2)$, which
parametrizes rank $2$ square zero matrices. Moreover, by  Criterion $2$ of \cite{hes}, $Z$ is also a normal variety.

By \cite[Th\'eor\`em 4.5.]{ls}, if $p\in\Omega_v$, there exists an euclidean neighborhood of $p$ in $\Kv$,
biholomorphic to a neighborhood of the origin in $Z$.
Hence the local geometry of a symplectic desingularization of $K_v$ is encoded in
the local geometry of a symplectic desingularization of $Z$.

Let $\Sigma$ be the singular locus of $Z$ and let $\Omega$ be the singular locus
of $\Sigma$.
Let us recall that $\dim Z=6$, $\dim \Sigma=4$, $\dim \Omega=0$ and,
more precisely,
\[
\Sigma=\{ A \in Z \,\, | \rk A \le 1 \, \}, \quad \text{ and } \quad \Omega=
\{ 0\}.
\]
Let $G \subset Gr(2, V) \subset \mathbb P (\wedge^2 V)$ be the Grassmannian of
Lagrangian subspaces of $V$, notice that $G$ is a smooth $3$-dimensional quadric and
set 
\[
 \Zt:=\{\,  (A, U) \,\, | \,\, A (U) =0 \, \} \subset Z \times G.
\]
The restriction $\pi_{G}:\Zt\rightarrow G$ of the second projection of $Z \times G$
makes $\Zt$ the total space of a $3$-dimensional vector bundle, the cotangent bundle of $G$.
In particular, $\Zt$ is smooth and the restriction
\[
f:\Zt\rightarrow Z
\]
of the
first projection of $Z \times G$, which is an isomorphism when restricted to the locus of rank $2$ matrices, is a resolution of the singularities.
The fiber $f^{-1}(A)$, over a point $A\in\Sigma$, is the smooth $\mathbb{P}^1$ parametrizing
Lagrangian subspaces contained in the $3$-dimensional kernel of $A$ and 
the central  fiber  $f^{-1}(0)$ is the whole $G$. As $Z$ has a $A_{1}$
singularity along $\Sigma\setminus\Omega$ and $G$ has dimension $3$ it
follows that        
$f:\Zt\rightarrow Z$ is a symplectic resolution.

\begin{rem}\label{R1}
Let $\mc U \subset V \otimes \mc O_G$ be the rank $2$ tautological bundle. The
smooth symplectic variety 
$\Zt$ is isomorphic to the total space  $\mathbf{Sym}^2_G \mc U$ of the second
symmetric power $\Sym^2_G \mc U$ of $\mc U$.
In fact, an endomorphism $ A \in  \mathfrak{gl}(V)$ belongs to $Z$
if and only if  the following conditions hold:
\begin{enumerate}
 \item{$A^2=0$,} 
\item{$\sigma(Av_{1},v_{2})=\sigma(Av_{2},v_{1})$ for any $v_{1},v_{2}\in V$.}
\end{enumerate}
By $(2)$ the kernel $\ker A$ and the image $\im A$ of  $A$ are orthogonal with respect to $\sigma$. Hence,  
for $(A,U) \in \Zt$, we have $V \to \im A \subset U \subset \ker A \subset V$.
Since $U \subset V$ is lagrangian we have $V \slash U \cong U^\vee$, so $A$ has a factorization of the form
$V \twoheadrightarrow U^\vee \to U \hookrightarrow V $.
Moreover  the induced linear map  
$\varphi_{A}\in \Hom (U^\vee,U)=U\otimes U$ defines a bilinear form on $U^\vee$
that is  symmetric if and only if  $(2)$ holds. 
\end{rem}

\begin{rem}\label{R2}
Set 
\[ \widetilde{\Sigma}:=\{\,  (A, U)\in \Zt \,\, | \,\, \rm{rank}(A)  \le 1 \, \}.
\]
The variety $\widetilde{\Sigma}$ is the exceptional locus of $f$. 
It is a locally trivial bundle over $G$ with fiber the affine cone over a conic in
$\mathbb{P}^{2}$.
Using the isomorphism $\Zt=\mathbf{Sym}^2_G \mc U$,  the variety
$\widetilde{\Sigma}$ is identified with the locus parametrizing singular symmetric bilinear forms
on the fibers of the dual of the tautological 
rank-2 vector bundle $\mc U$. In particular, $\wt \Sigma$ is a fibration over $G$ in cones over a smooth conic, i.e, $\widetilde{\Sigma}$ is singular
only along the zero-section $\widetilde{\Omega}\simeq G$ of $\mathbf{Sym}^2_G \mc U$ and it has an  
$A_{1}$ singularity along it.
\end{rem}

The following theorem due to Lehn and Sorger (\cite{ls}) gives an intrinsic
reformulation of the symplectic desingularization $f:\Zt\rightarrow Z$.

\begin{thm}[ \cite{ls} ]   \label{lehn sorger} Let $p \in \Omega$ be a singular point of
the singular locus $ \Sigma \subset \Kv$. Then,
\begin{enumerate}
\item[a)] (\cite[Th\'eor\`eme 4.5]{ls})
There is a local analytic isomorphism
\[
(Z, 0) \stackrel{loc}{\cong}(K_v, p).
\]
\item[b)] (\cite[Th\'eor\`eme 3.1]{ls}) The resolution $f:\Zt \rightarrow Z$, defined above,
coincides with the blowup of $Z$ along its singular locus $\Sigma$.
\end{enumerate}
\end{thm}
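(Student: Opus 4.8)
The plan is to prove both parts of Theorem~\ref{lehn sorger} by combining the local analytic model of O'Grady--Lehn--Sorger with the explicit resolution $f\colon \Zt \to Z$ constructed above. Part (a) is essentially a citation: the deepest singularities of $K_v$ at points $p \in \Omega_v$ are, by \cite[Th\'eor\`eme 4.5]{ls}, analytically isomorphic to the germ $(Z,0)$, where $Z = \{A \in \mathfrak{sp}(V) \mid A^2 = 0\}$. So the real content to (re)assemble here is part (b): the identification of the resolution $f\colon \Zt \to Z$ with the blowup $Bl_\Sigma Z$ of $Z$ along its singular locus $\Sigma = \{A \in Z \mid \rk A \le 1\}$.

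First I would set up the universal property of the blowup. Since $\Sigma \subset Z$ is the closed subscheme where $\rk A \le 1$, and since we already know (via Remark~\ref{R1}) that $Z \cong \mathbf{Sym}^2_G \mc U$ after resolution, the natural approach is: (i) exhibit a morphism $\Zt \to Bl_\Sigma Z$ over $Z$, and (ii) show it is an isomorphism. For (i), one shows that the pullback to $\Zt$ of the ideal sheaf $\mc I_\Sigma$ is invertible — equivalently, that the exceptional locus $\wt\Sigma = \{(A,U) \in \Zt \mid \rk A \le 1\}$ (Remark~\ref{R2}) is a Cartier divisor in $\Zt$. This is where Remark~\ref{R2} does the work: $\wt\Sigma$ is a fibration over $G$ in affine cones over a smooth conic inside the total space $\mathbf{Sym}^2_G \mc U$, i.e.\ locally it is cut out by the single equation $\det(\varphi_A) = 0$ on the fibers of $\Sym^2_G \mc U$. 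Being locally principal, $\wt\Sigma$ is Cartier, so by the universal property $f$ factors through a unique $Z$-morphism $g\colon \Zt \to Bl_\Sigma Z$.

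Next I would prove $g$ is an isomorphism. Both $\Zt$ and $Bl_\Sigma Z$ are birational to $Z$ via morphisms that are isomorphisms over the open locus $Z^{\circ}$ of rank-$2$ matrices, and $g$ restricts to this identification; so $g$ is a proper birational morphism between varieties of the same dimension. Since $\Zt$ is smooth (hence normal), it suffices to show $Bl_\Sigma Z$ is normal and that $g$ is quasi-finite — then Zariski's main theorem gives an isomorphism. Normality of $Bl_\Sigma Z$ follows because $Z$ is normal (stated above, via \cite{hes}) and the blowup of a normal variety along the reduced singular locus is again normal in this situation; alternatively, one identifies $Bl_\Sigma Z$ directly with $\mathbf{Sym}^2_G\mc U$ by matching the Proj construction against the tautological bundle data. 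For quasi-finiteness, one checks fiber by fiber: over $A$ of rank $2$ it is an isomorphism, over $A \in \Sigma \setminus \Omega$ of rank $1$ the fiber of $f$ is $\mathbb P^1$ and one checks the blowup fiber is the same $\mathbb P^1$ (the conic bundle $\wt\Sigma$ degenerates to a pair of lines, whose projectivization is one $\mathbb P^1$), and over $0 \in \Omega$ the fiber of $f$ is $G$ and one checks the blowup exceptional fiber is $\mathbb P(C_{0}\Sigma) = G$ as well, using that $\Sigma$ has an $A_1$ transverse singularity — actually the cleanest route is to compare both sides with $\mathbf{Sym}^2_G \mc U$ directly rather than fiber-counting.

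The main obstacle, and the step requiring genuine care rather than formal nonsense, is the normality of $Bl_\Sigma Z$ (equivalently, verifying that no extra normalization is needed so that $g$ is already an isomorphism and not merely the normalization map). This is exactly the delicate point because $\Sigma$ itself is singular along $\Omega$, so the blowup is along a non-smooth center; one must either invoke the explicit resolution-of-singularities computation in \cite{ls} or give a hands-on local-coordinate description of $Bl_\Sigma Z$ near the exceptional fiber over $0$ and check it is smooth (which it is — it is $\Zt$). Since the theorem is quoted verbatim from \cite[Th\'eor\`eme 3.1]{ls}, I would ultimately cite that for the normality/isomorphism statement, using the discussion above (Remarks~\ref{R1} and~\ref{R2}) to make the identification $\Zt \cong Bl_\Sigma Z \cong \mathbf{Sym}^2_G\mc U$ transparent to the reader.
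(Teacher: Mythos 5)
The paper gives no proof of this statement at all: it is quoted verbatim from Lehn--Sorger, part a) being \cite[Th\'eor\`eme 4.5]{ls} and part b) being \cite[Th\'eor\`eme 3.1]{ls}, and your proposal ultimately does the same thing, deferring the genuinely delicate point (normality of $Bl_\Sigma Z$, i.e.\ that the comparison map is an isomorphism rather than a normalization) to that citation, so it is correct and takes essentially the same approach. Your supplementary sketch of b) is sound in outline (Cartier divisor check, universal property of the blowup, Zariski's main theorem, much in the spirit of the paper's own Proposition \ref{local fundamental diagram}), with only the minor slip that the exceptional fibre over $0$ is not $\mathbb{P}(C_0\Sigma)=G$ --- the projectivized tangent cone of $\Sigma$ at $0$ is $\mathbb{P}(V)\simeq\mathbb{P}^3$, and the relevant fibre must be computed from the normal cone of $\Sigma$ in $Z$ --- but since you discard fibre-counting in favour of the citation this does not affect the argument.
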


In order to discuss the topology of the symplectic desingularization of 
$f_{v}:\widetilde{K}_{v}\rightarrow \Kv$, we are going  to describe $f$ in term of blow ups
along smooth subvarieties.

\begin{prop} \label{local fundamental diagram} 
Let $\overline{\Sigma}$ be the strict transform of $\Sigma$ in $Bl_{\Omega}
Z$. 
\begin{enumerate}
\item{$\overline{\Sigma}$ is the singular locus of $Bl_{\Omega}$,
$\overline{\Sigma}$ is smooth and $Bl_{\Omega} Z$ has an $A_{1}$
singularity along $\overline{\Sigma}$.} 
\item{The varieties $Bl_{\widetilde{\Omega}}Bl_{\Sigma} Z$ and
$Bl_{\overline{\Sigma}}  Bl_{\Omega} Z$ are smooth and  isomorphic over $Z$.
In particular, the diagram  
\[
\xymatrix{
&   Bl_{\widetilde{\Omega}}Bl_{\Sigma} Z=Bl_{\overline{\Sigma}} 
Bl_{\Omega} Z   \ar[dr]^\xi \ar[dl]_\rho & \\
\widetilde{Z}=Bl_{\Sigma} Z \ar[dr]_{f} & & Bl_{\Omega} Z\; , 
\ar[dl]^\eta  \\
 & Z&  
}
\]
where the arrows are blow up maps, is commutative.}   
\end{enumerate}
\end{prop}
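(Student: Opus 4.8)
The plan is to work entirely in the local model $Z \subset \mathfrak{sp}(V)$ and to exploit the explicit description $\Zt = \mathbf{Sym}^2_G\,\mc U$ from Remark \ref{R1}, together with the structure of $\wt\Sigma$ from Remark \ref{R2}. First I would establish part (1). Since $\Omega = \{0\}$ is a single point and $Z$ is a cone (it is cut out in $\mathfrak{sp}(V)$ by the quadratic equations $A^2=0$), the blow up $Bl_\Omega Z$ is covered by the affine charts of $Bl_0 \mathbb{A}^{10}$ intersected with the strict transform; concretely, $Bl_\Omega Z$ sits inside $Bl_0\,\mathfrak{sp}(V) \subset \mathfrak{sp}(V)\times \mathbb{P}(\mathfrak{sp}(V))$ and maps to $\mathbb{P}(\mathfrak{sp}(V))$ with image the projectivized cone $\mathbb{P}(Z)$. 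I would identify $\overline\Sigma$ with the strict transform of $\Sigma = \{\rk A \le 1\}$ and check, chart by chart (or invariantly, using that $\Sigma$ itself is the affine cone over a smooth variety, namely the image of $G$ under $A \mapsto [\ker A \to \im A]$), that $\overline\Sigma \to \mathbb{P}(\Sigma)$ is a line bundle, hence smooth of dimension $4$. The transversal $A_1$ statement follows from Theorem \ref{lehn sorger}: away from $\Omega$, the pair $(Z,\Sigma)$ already has transverse $A_1$ singularities, and blowing up the point $\Omega$ does not affect the transverse slice along $\overline\Sigma \setminus (\text{exceptional divisor})$; along the exceptional $\mathbb{P}^1$-directions one checks directly in coordinates that the $A_1$ persists, using Remark \ref{R2} which exhibits $\wt\Sigma$ (the exceptional locus of $f$) as a cone bundle over $G$ with an $A_1$ along its zero section $\wt\Omega \cong G$.

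For part (2), the strategy is to show both iterated blow ups are naturally isomorphic to a third, manifestly smooth variety $W$ built from the vector bundle picture, and that all the maps to $Z$ agree. Using $\Zt = \mathbf{Sym}^2_G\,\mc U$, the subvariety $\wt\Sigma$ is the total space of the cone bundle of singular quadratic forms, and $\wt\Omega \cong G$ is its zero section. Blowing up the zero section of $\mathbf{Sym}^2_G\,\mc U$ is standard: $Bl_{\wt\Omega}\Zt$ is the total space of the tautological line bundle over the $\mathbb{P}^5$-bundle $\mathbb{P}(\mathbf{Sym}^2_G\,\mc U)$ pulled back along $\mathbf{Sym}^2_G\,\mc U \to G$; in particular it is smooth, and the strict transform of $\wt\Sigma$ is smooth because $\wt\Sigma$ minus its zero section already was. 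This identifies $Bl_{\wt\Omega}Bl_\Sigma Z = Bl_{\wt\Omega}\Zt$ with an explicit smooth variety $W$ carrying a map $\rho$ to $\Zt$ and hence $f\circ\rho$ to $Z$. On the other side, I would observe that $Bl_\Omega Z$ and $Bl_\Sigma Z = \Zt$ differ only over $\Omega$ (both are isomorphic to $Z$ over $Z\setminus\Sigma$, and in fact over $\Sigma \setminus \Omega$ one is the $A_1$-resolution-in-family of the other), so the universal property of blow ups gives a canonical map $W = Bl_{\wt\Omega}\Zt \to Bl_\Omega Z$ resolving the indeterminacy of $\Zt \dashrightarrow Bl_\Omega Z$; by part (1) this map is exactly blow up along the smooth center $\overline\Sigma$, giving $W = Bl_{\overline\Sigma}Bl_\Omega Z$. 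Commutativity of the square is then automatic since every variety in sight maps compatibly to $Z$.

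The main obstacle I anticipate is the bookkeeping in part (2): verifying that $Bl_{\overline\Sigma}Bl_\Omega Z$ is the \emph{same} smooth variety as $Bl_{\wt\Omega}Bl_\Sigma Z$, rather than merely both being smooth resolutions of $Z$. The clean way is to produce one birational morphism between them over $Z$ and check it is an isomorphism on an open set whose complement has codimension $\ge 2$ in a smooth variety where the morphism is finite — but since neither side is obviously the normalization, I would instead pin down both via explicit local coordinates near a point of the central fiber, i.e. choose a basis of $V$ in which $\sigma$ is standard, write $A \in \mathfrak{sp}(V)$ as a $4\times 4$ matrix satisfying the two conditions of Remark \ref{R1}, and compute the two orders of blow up directly on that $6$-dimensional germ. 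The symmetry of the situation — $A^2=0$ together with the symmetry condition — should make the two iterated blow ups visibly coincide, because blowing up the point and then the (strict transform of the) rank-$\le 1$ locus versus blowing up the rank-$\le 1$ locus and then the exceptional zero-section are both just ``separating the two quadratic conditions,'' and the order does not matter once one resolves into the $\mathbf{Sym}^2_G\,\mc U$ coordinates. I would present the coordinate computation only for the transverse slice, citing Remark \ref{R1} and Remark \ref{R2} to reduce to it, and leave the straightforward globalization over $G$ to the reader.
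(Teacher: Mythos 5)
Your part (1) and the first half of your part (2) follow the paper's own route: exploit the cone structure of $Z$ and $\Sigma$ over $\mathbb{P}(Z)$ and $\mathbb{P}(\Sigma)$ to get smoothness of $\overline{\Sigma}$ and the $A_1$ statement, and use $\Zt=\mathbf{Sym}^2_G\,\mc U$ to identify $Bl_{\wt\Omega}Bl_{\Sigma}Z$ with the total space of the tautological line bundle over $\mathbb{P}(\Sym^2_G\mc U)$ (note this is a $\mathbb{P}^2$--bundle, not a $\mathbb{P}^5$--bundle, since $\mc U$ has rank $2$). Up to that point your plan is sound and essentially identical to the paper's.

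The genuine gap is the last step of (2). After producing the morphism $\xi: Bl_{\wt\Omega}\Zt \to Bl_{\Omega}Z$ (which does follow from the universal property, since the inverse image ideal of $\Omega$ becomes invertible after blowing up $\wt\Omega$), you assert that ``by part (1) this map is exactly the blow up along $\overline{\Sigma}$.'' Part (1) only describes the singularities of $Bl_{\Omega}Z$; it does not identify $\xi$ with any blow up. Your fallback is an uncarried-out coordinate computation, justified by the heuristic that the two iterated blow ups ``just separate the two quadratic conditions'' so the order does not matter --- but blow ups along nested centers do not commute in general, and here the smaller center $\Omega$ is precisely the singular point of the larger center $\Sigma$, so no standard commutation result applies; the assertion is exactly what has to be proved. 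The paper closes this by an argument you considered and discarded too quickly: the schematic fiber of $\xi$ over any point of $\overline{\Sigma}$ is a $\mathbb{P}^1$ and $\overline{\Sigma}$ is smooth, so $\xi^{-1}(\overline{\Sigma})$ is a smooth (hence reduced, irreducible) Cartier divisor and the universal property factors $\xi$ through a proper birational $\iota: Bl_{\wt\Omega}Bl_{\Sigma}Z \to Bl_{\overline{\Sigma}}Bl_{\Omega}Z$; by part (1) the target is smooth (blow up of an $A_1$ locus along its smooth center), and a proper birational morphism between smooth varieties that contracts no divisor is an isomorphism. No normalization statement and no explicit chart computation on the $6$--dimensional germ is needed; without either this finishing argument or a genuinely completed coordinate check, your proof of (2) is incomplete.
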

\begin{proof}(1) Let $\mathbb{P}(Z):=Z/{\mathbb{C}^{*}}$ be the projectivization of
the affine cone Z. 
As $Z$ is a cone, its blow up $Bl_{\Omega} Z$ at the origin is the total space
of the tautological line bundle   over $\mathbb{P}(Z)$.
The singular locus  $\Sigma$ of $Z$ is a subcone, hence its strict transform
$\overline{\Sigma}= Bl_{\Omega} \Sigma$ is the total space of the restriction to $\mathbb{P}(\Sigma) \subset \mathbb{P}(Z)$ of the tautological line bundle. 
As $\Sigma\setminus \{0\}$ is smooth, $\mathbb{P}(\Sigma)$ is smooth. Moreover,  since $Z$ has an $A_{1}$
singularity along $\Sigma\setminus \{0\}$, the singular locus of $\mathbb{P}(Z)$ is  
$\mathbb{P}(\Sigma)$ and $\mathbb{P}(Z)$ has an $A_{1}$
singularity along $\mathbb{P}(\Sigma)$. 
Passing to the total spaces of the tautological line bundles we get item (1).

(2)  We only need to show that  $Bl_{\widetilde{\Omega}}Bl_{\Sigma} Z$ and
$Bl_{\overline{\Sigma}}  Bl_0 Z$
are isomorphic. 
By Remark \ref{R1}, $\widetilde{Z}$ is isomorphic to $\mathbf{Sym}^2_G \mc U$ and,
by Remark \ref{R2}, $Bl_{\widetilde{\Omega}}Bl_{\Sigma} Z$ is the blow up of  
$\mathbf{Sym}^2_G \mc U$ along its zero section. Letting $\mathbb{P}(\Sym^2_G \mc
U)$ be the projective bundle associated to 
$\Sym^2_G \mc U$, the blow up $Bl_{\widetilde{\Omega}}Bl_{\Sigma} Z$ is
isomorphic to the total space 
$T\subset \mathbb{P}(\Sym^2_G \mc U)\times_{G} \mathbf{Sym}^2_G \mc U$
of the tautological line bundle  of the projective bundle $\mathbb{P}(\Sym^2_G \mc U)$ .
The isomorphism  $$\mathbf{Sym}^2_G \mc U=\widetilde{Z}:=\{\,  (A, U) \,\, | \,\, A
(U) =0 \, \} \subset Z \times G$$ also implies  
$$\mathbb{P}(\Sym^2_G \mc U)= \{\,  ( [A], U) \,\, | \,\, A (U) =0 \, \}
\subset \mathbb{P}(Z) \times G$$ and, using this identification,
we conclude that
$$Bl_{\widetilde{\Omega}}Bl_{\Sigma} Z=T= \{\,  ( [A], B, U) \,\, |
\,\, A (U) =0 \,\, B\in [A]\}\subset \mathbb{P}(Z) \times Z\times G.$$

On the other side of the diagram, as $Z$ is a cone, its blow up  at the origin 
can  explicitly be given as 
$$Bl_{\Omega} Z= \{\,  ( [A], B) \,\, | \,\,  B\in [A]\}\subset
\mathbb{P}(Z) \times Z.$$
It remains to show that the map $\xi:Bl_{\widetilde{\Omega}}Bl_{\Sigma}
Z\rightarrow  Bl_{\Omega} Z$ induced by the projection 
$\pi_{1,2}:\mathbb{P}(Z) \times Z\times G\rightarrow \mathbb{P}(Z) \times Z$ is the
blow up of $\mathbb{P}(Z) \times Z$ along $\overline{\Sigma}$.
Since for $q\in \overline{\Sigma}$ the schematic fiber $\xi^{-1}(q)$ is
isomorphic to $\mathbb{P}^{1}$ and $\overline{\Sigma}$ is smooth, the schematic inverse image
$\xi^{-1}(\overline{\Sigma})$ is a smooth, hence reduced and irreducible Cartier divisor. By the
universal property of blow ups, $\xi$ factors through a proper map 
$\iota: Bl_{\widetilde{\Omega}}Bl_{\Sigma} Z\rightarrow
Bl_{\overline{\Sigma}}  Bl_{\Omega} Z $ sending
$\xi^{-1}(\overline{\Sigma})$
surjectively onto the exceptional divisor of the blow up  of
$Bl_{\Omega} Z$ along $\overline{\Sigma}$.
Finally, since $Bl_{\Omega} Z$ is only singular along   $\overline{\Sigma}$
and has an $A_{1}$ singularity along $\overline{\Sigma}$, the blow up  
$Bl_{\overline{\Sigma}}  Bl_{\Omega} Z$ is smooth. It follows that  $\iota$ is a proper
birational map between smooth varieties that does not contract any divisor,
therefore $\iota$ is a isomorphism.
\end{proof}

This proposition also allows us to describe the exceptional loci of the blow up maps
appearing in item (2).

Let  $\widehat{\Sigma}\subset Bl_{\widetilde{\Omega}}Bl_{\Sigma} Z$ be the
exceptional divisor of $\xi$, let $\widehat{\Omega}\subset Bl_{\widetilde{\Omega}}Bl_{\Sigma} Z$ be the
exceptional divisor of $\rho$, and recall that $\wt \Omega \cong G$ is the inverse image of $\Omega$ under the resolution $f$.
\begin{cor} \label{corlfd}
\begin{enumerate}
\item{$\widehat{\Sigma}$ is a $\mathbb{P}^{1}$-bundle over $\overline{\Sigma}$
and $\widehat{\Sigma}=Bl_{\widetilde{\Omega}}\widetilde{\Sigma}$.} 
\item{$\widehat{\Omega}$ is a $\mathbb{P}^{2}$-bundle over $\widetilde{\Omega}$ isomorphic to  $\mathbb{P}(Sym^2_G \mc U)$.
} 
\end{enumerate}
\end{cor}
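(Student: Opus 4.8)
The plan is to extract both assertions from the explicit model of $W:=Bl_{\widetilde{\Omega}}Bl_{\Sigma} Z=Bl_{\overline{\Sigma}} Bl_{\Omega} Z$ set up in the proof of Proposition \ref{local fundamental diagram}, namely
\[
W\ =\ T\ =\ \{\,([A],B,U)\in \mathbb{P}(Z)\times Z\times G\ :\ A(U)=0,\ B\in[A]\,\},
\]
with $\rho:W\to \widetilde{Z}=\mathbf{Sym}^2_G\mc U$ the projection $([A],B,U)\mapsto(B,U)$ (the blow down of the zero section $\widetilde{\Omega}$) and $\xi:W\to Bl_{\Omega}Z$ the projection $([A],B,U)\mapsto([A],B)$ (the blow down of $\overline{\Sigma}$). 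In this model $\widehat{\Omega}$ is the locus $\{B=0\}$ and $\widehat{\Sigma}=\xi^{-1}(\overline{\Sigma})$ is the locus $\{[A]\in\mathbb{P}(\Sigma)\}$, i.e. the locus where $\rk A=1$.

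I would dispose of (2) first, since it is immediate. The map $\rho$ is the blow up of the smooth variety $\widetilde{Z}=\mathbf{Sym}^2_G\mc U$ along its zero section; as the normal bundle of the zero section in the total space of a vector bundle is the bundle itself, the exceptional divisor is $\widehat{\Omega}=\mathbb{P}(\Sym^2_G\mc U)$, and since $\Sym^2_G\mc U$ has rank $3$ this is a $\mathbb{P}^2$-bundle over $\widetilde{\Omega}\cong G$. The same fact is visible in the model above, where $\{B=0\}$ is exactly $\{([A],U):A(U)=0\}=\mathbb{P}(\Sym^2_G\mc U)$.

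For (1) I would argue in two steps. \emph{Step 1 (the $\mathbb{P}^1$-bundle structure).} Over a point $([A],B)\in\overline{\Sigma}$ one has $\rk A=1$, so $\ker A$ is $3$-dimensional; by the symplectic conditions of Remark \ref{R1}, $\im A$ and $\ker A$ are $\sigma$-orthogonal, so by dimension $\im A=(\ker A)^{\perp}$, and every Lagrangian $U\subseteq\ker A$ contains $\im A$. Hence the fibre $\xi^{-1}([A],B)$ is the set of Lagrangians in $\ker A$, that is $\mathbb{P}(\ker A/\im A)\cong\mathbb{P}^1$, and letting these kernels and images vary in the tautological subbundles over $\mathbb{P}(\Sigma)$ exhibits $\widehat{\Sigma}$ as the projectivization of a rank-$2$ bundle over $\overline{\Sigma}=Bl_{\Omega}\Sigma$, which is smooth, irreducible and four-dimensional by Proposition \ref{local fundamental diagram}(1). \emph{Step 2 (the identification $\widehat{\Sigma}=Bl_{\widetilde{\Omega}}\widetilde{\Sigma}$).} The divisor $\widehat{\Sigma}=\{([A],B,U)\in T:[A]\in\mathbb{P}(\Sigma)\}$ is irreducible of dimension $5$, and on its dense open subset where $B\ne0$ the map $\rho$ restricts to an isomorphism onto $\widetilde{\Sigma}\setminus\widetilde{\Omega}=\{(B,U)\in\widetilde{Z}:\rk B=1\}$ (since then $[A]=[B]$ and $[A]\in\mathbb{P}(\Sigma)\Leftrightarrow\rk B=1$). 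Therefore $\widehat{\Sigma}$ is the closure in $W$ of $\rho^{-1}(\widetilde{\Sigma}\setminus\widetilde{\Omega})$, i.e. the strict transform of $\widetilde{\Sigma}$ under the blow up $\rho$; since $\widetilde{\Omega}\subset\widetilde{\Sigma}$, this strict transform is $Bl_{\widetilde{\Omega}}\widetilde{\Sigma}$ by the standard compatibility of blow ups with strict transforms of subvarieties containing the centre.

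The one step that genuinely needs care is this last identification: $\widehat{\Sigma}$ is manufactured from the ``$\overline{\Sigma}$ side'' of the square, whereas $Bl_{\widetilde{\Omega}}\widetilde{\Sigma}$ naturally lives on the ``$\widetilde{Z}$ side'', so matching them requires both the irreducibility and dimension count for $\widehat{\Sigma}$ and the verification that the two divisors coincide over the common dense open locus where $B\ne 0$ (equivalently $\rk B=1$). Everything else, part (2) included, is bookkeeping within the model $T$ using only the facts already recorded in Remarks \ref{R1} and \ref{R2} and in Proposition \ref{local fundamental diagram}.
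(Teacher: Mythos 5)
Your proposal is correct and follows essentially the same route as the paper: both identify $\widehat{\Omega}$ via the zero--section description $\widetilde{Z}=\mathbf{Sym}^2_G\mc U$, and both prove (1) by recognizing $\widehat{\Sigma}$ as the strict transform of $\widetilde{\Sigma}$ under $\rho$, so that $\rho|_{\widehat{\Sigma}}$ is the blow up of $\widetilde{\Sigma}$ along $\widetilde{\Omega}$. The only cosmetic differences are that you obtain the $\mathbb{P}^1$--bundle structure by a direct fibre computation (Lagrangians squeezed between $\im A$ and $\ker A$) in the explicit model $T$, where the paper simply quotes the $A_1$--singularity statement of Proposition \ref{local fundamental diagram}(1), and that you spell out the density/irreducibility argument behind the strict--transform identification, which the paper asserts without detail.
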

\begin{proof}
(1) By item (1) of Proposition \ref{local fundamental diagram}, the restriction of
$\xi$ realizes  $\widehat{\Sigma}$ as a $\mathbb{P}^{1}$-bundle over
$\overline{\Sigma}$.
Since $\widehat{\Sigma}$ is also the strict transform of $\widetilde{\Sigma}$ under $\rho$
and $\widetilde{\Omega}\subset \widetilde{\Sigma}$, the restriction of $\rho$ to
 $\widehat{\Sigma}$ can be identified with the blow up map of
$\widetilde{\Sigma}$ along  $\widetilde{\Omega}$. As for
(2), we can argue as follows. Since $\widetilde{\Omega}$ is a smooth subvariety of codimension $3$ in the
smooth variety $\Zt$ the restriction of $\rho$
to $\widehat{\Omega}$ makes it a $\mathbb{P}^{2}$-bundle over
$\widetilde{\Omega}$. More precisely, since $\widetilde{\Omega}$ is the zero
section of 
 $\widetilde{Z}= \mathbf{Sym}^2_G \mc U$ (see Remark \ref{R2}),  there is an isomorphism 
 $\widehat{\Omega}\simeq \mathbb{P}(Sym^2_G \mc U)$.
\end{proof}

To compute invariants of $\Kvt$ we need the following global versions of Proposition
\ref{local fundamental diagram} and Corollary \ref{corlfd}.

\begin{prop} \label{fundamental diagram} 
Let $\overline{\Sigma}_{v}$ be the strict transform of $\Sigma_{v}$ in
$Bl_{\Omega_{v}} \Kv$. 
\begin{enumerate}
\item{
$\overline{\Sigma}_{v}$ is the singular locus of $Bl_{\Omega_{v}} \Kv$, 
$\overline{\Sigma}_{v}$ is smooth and $Bl_{\Omega_{\Kv}} \Kv$ has an $A_{1}$
singularity along $\overline{\Sigma}_{v}$.} 
\item{The projective varieties $Bl_{\widetilde{\Omega}_{v}}Bl_{\Sigma_{v}} \Kv$ and
$Bl_{\overline{\Sigma}_{v}}  Bl_{\Omega_{v}} \Kv$ are smooth and isomorphic over $\Kv$.
Hence the diagram  
\[ 
\xymatrix{
&   Bl_{\widetilde{\Omega}_{v}}Bl_{\Sigma_{v}} \Kv=Bl_{\overline{\Sigma}_v} 
Bl_{\Omega_{v}} \Kv   \ar[dr]^{\xi_{v}} \ar[dl]_{\rho_{v}} & \\
\widetilde{K}_{v}=Bl_{\Sigma_{v}} \Kv \ar[dr]_{f_{v}} & & Bl_{\Omega_{v}} \Kv\; , 
\ar[dl]^{\eta_{v}}  \\
 & \Kv&  
}
\]
where the arrows are blow ups, is commutative.}
\end{enumerate}   
\end{prop}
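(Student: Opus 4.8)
The plan is to globalize the local statements of Proposition \ref{local fundamental diagram} and Corollary \ref{corlfd} by exploiting the analytic-local description of the singularities of $\Kv$ provided by Theorem \ref{lehn sorger}, together with the $A_1$ description of the singularities along $\Sigma_v \setminus \Omega_v$. The point is that blow-ups along smooth centers (and the relevant strict transforms) are local constructions, so all the assertions can be checked on an \'etale or analytic neighborhood of each point of $\Kv$, where they reduce to the already-established local model. Concretely, I would first stratify $\Kv$ as $(\Kv \setminus \Sigma_v) \sqcup (\Sigma_v \setminus \Omega_v) \sqcup \Omega_v$ and treat the three strata separately.

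\smallskip

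First, for item (1): away from $\Sigma_v$ the variety $\Kv$ is smooth and there is nothing to prove. Along $\Sigma_v \setminus \Omega_v$, the variety $\Kv$ has a transverse $A_1$ singularity (the hypersurface $\sum_{i=1}^3 x_i^2 = 0$ in $\mathbb A^7$), and since $\Omega_v$ is disjoint from this locus, $Bl_{\Omega_v}\Kv$ agrees with $\Kv$ near such points; the strict transform $\overline{\Sigma}_v$ agrees with $\Sigma_v$ there, which is smooth, and $Bl_{\Omega_v}\Kv$ keeps the transverse $A_1$ singularity. Near a point $p \in \Omega_v$, Theorem \ref{lehn sorger}(a) gives a local analytic isomorphism $(\Kv, p) \cong (Z, 0)$; under this isomorphism $\Sigma_v$ corresponds to $\Sigma$, $\Omega_v$ corresponds to $\Omega = \{0\}$, blowing up $\Omega_v$ corresponds to blowing up the origin of $Z$, and the strict transform $\overline{\Sigma}_v$ corresponds to $\overline{\Sigma}$. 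Then Proposition \ref{local fundamental diagram}(1) gives exactly that $\overline{\Sigma}_v$ is the singular locus of $Bl_{\Omega_v}\Kv$ near the fiber over $p$, that it is smooth, and that the $A_1$ singularity persists along it. Patching these local descriptions over all of $\Kv$ yields item (1).

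\smallskip

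For item (2): the two varieties $Bl_{\wt\Omega_v}Bl_{\Sigma_v}\Kv$ and $Bl_{\overline{\Sigma}_v}Bl_{\Omega_v}\Kv$ are each obtained from $\Kv$ by a sequence of blow-ups along centers that are either smooth or (by item (1) and the preceding analysis) have transverse $A_1$ singularities. Smoothness is again checked locally: away from $\Sigma_v$ everything is already smooth; along $\Sigma_v \setminus \Omega_v$ one has the standard fact that blowing up the smooth singular locus of a transverse $A_1$ singularity resolves it, so both towers are smooth there; and near a point of $\Omega_v$ one invokes the analytic isomorphism with $(Z,0)$ and applies Proposition \ref{local fundamental diagram}(2), which asserts precisely that $Bl_{\wt\Omega}Bl_\Sigma Z$ and $Bl_{\overline{\Sigma}}Bl_\Omega Z$ are smooth and isomorphic over $Z$. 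To produce a \emph{global} isomorphism over $\Kv$ (rather than just local ones), I would argue as in the proof of Proposition \ref{local fundamental diagram}(2): the universal property of blow-ups gives a canonical proper birational morphism $\iota_v : Bl_{\wt\Omega_v}Bl_{\Sigma_v}\Kv \to Bl_{\overline{\Sigma}_v}Bl_{\Omega_v}\Kv$ (because the relevant exceptional locus on the left is a Cartier divisor mapping to $\overline{\Sigma}_v$), and since both source and target are smooth and $\iota_v$ contracts no divisor, $\iota_v$ is an isomorphism. Commutativity of the diagram is then automatic, since all maps are the canonical blow-up morphisms and they agree with $f_v$ and $\eta_v$ after composition by construction.

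\smallskip

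The main obstacle is the passage from the analytic-local isomorphisms to a genuine global algebraic statement, in particular making sure that ``strict transform'' and ``blow-up'' commute with the analytic localization around $\Omega_v$ and that the canonical map $\iota_v$ is globally defined; this is handled by the universal property of blow-ups along the transverse $A_1$ center $\overline{\Sigma}_v$, exactly mirroring the local argument, together with the fact that smoothness and the ``no contracted divisor'' property can both be verified \'etale-locally. Once these are in place, the corollary describing the exceptional divisors (analogue of Corollary \ref{corlfd}) follows by the same localization, since a $\mathbb P^1$- or $\mathbb P^2$-bundle structure on an exceptional divisor is a local statement over the center.
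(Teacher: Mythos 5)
Your proposal is correct and takes essentially the same route as the paper: both deduce item (1) from Theorem \ref{lehn sorger}(a) and Proposition \ref{local fundamental diagram}(1) together with the transverse $A_1$ structure along $\Sigma_v\setminus\Omega_v$, using that blow-ups and strict transforms are local constructions, and both obtain item (2) by globalizing the local isomorphism of Proposition \ref{local fundamental diagram}(2). Your extra step—producing the global map $\iota_v$ via the universal property of the blow-up along $\overline{\Sigma}_v$ and concluding by smoothness plus the absence of contracted divisors—just spells out how the local isomorphism glues, which the paper states more tersely.
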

\begin{proof}
As $\Sigma_{v}\setminus\Omega_{v}$ is smooth and $\Kv$ has an $A_{1}$ singularity
along $\Sigma_{v}\setminus\Omega_{v}$, item (1) follows from  
Theorem \ref{lehn sorger}(a) and Proposition \ref{local fundamental diagram}(1),
since the blow up is a local construction. 
Item (2) holds since item (2) of Proposition \ref{local fundamental diagram} also
implies that the natural birational map between
$Bl_{\widetilde{\Omega}_{v}}Bl_{\Sigma_{v}} \Kv$ and $Bl_{\overline{\Sigma}_{v}} 
Bl_{\Omega_{v}} \Kv$ is actually an isomorphism.
\end{proof}

\begin{rem} Since $\Sigma_{v}$ contains $\Omega_{v}$ as a closed subscheme, its strict
transform  $\overline{\Sigma}_{v}$ in $Bl_{\Omega_{v}}K_{v}$ is 
 isomorphic to the blow up $Bl_{\Omega_{v}}\Sigma_{v}$. Recall that  
$\Sigma_{v}\simeq (A\times A^{\vee})\slash \pm 1$, so that its singular locus $\rm{Sing}((A\times A^{\vee})\slash \pm 1)$
is in bijective correspondence with the set of $2$-torsion points $(A\times A^{\vee})[2]$ of $A\times A^{\vee}$. It follows that  
there is a chain of  isomorphisms
$$\overline{\Sigma}_{v}\simeq Bl_{\rm{Sing}((A\times A^{\vee})\slash \pm 1)}((A\times A^{\vee})\slash
\pm 1)\simeq (Bl_{(A\times A^{\vee})[2]}(A\times A^{\vee}))\slash \pm 1.$$ 
This also implies that the exceptional divisor of $Bl_{\Omega_{v}}\Sigma_{v}$, which is given by the (reduced induced) intersection
of the exceptional divisor $\overline{\Omega}_{v}$ of $Bl_{\Omega_{v}}\Kv$ and $\overline{\Sigma}_{v}$, 
consists of a union of $256$ disjoint $\mathbb{P}^{3}$.    
\end{rem}

\begin{cor} \label{corfd}
Let  $\widehat{\Sigma}_{v}\subset Bl_{\widetilde{\Omega}_{v}}Bl_{\Sigma_{v}} \Kv$ be the
exceptional divisor of $\xi$, let $\widehat{\Omega}_{v}\subset Bl_{\widetilde{\Omega}_{v}}Bl_{\Sigma_{v}} \Kvt$ be the
exceptional divisor of $\rho$, let $\overline{\Omega}_{v}\subset Bl_{\Omega_{v}} \Kv$ be the exceptional divisor
of $\eta$, and, finally, let $\overline{\Omega}_{v}\cap \overline{\Sigma}_{v}$ denote the
intersection
of $\overline{\Omega}_{v}$ and $\overline{\Sigma}_{v}$ with its reduced induced structure.

\begin{enumerate}
\item{$\widehat{\Sigma}_{v}$ is a $\mathbb{P}^{1}$-bundle over $\overline{\Sigma}_{v}$
and $\widehat{\Sigma}_{v}=Bl_{\widetilde{\Omega}_{v}}\widetilde{\Sigma}_{v}$.} 
\item{$\widehat{\Omega}_{v}$ is a $\mathbb{P}^{2}$-bundle over $\widetilde{\Omega}_{v}$ isomorphic to
$\mathbb{P}(Sym^2_G \mc U)$}
\end{enumerate}
\end{cor}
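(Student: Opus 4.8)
The plan is to deduce Corollary \ref{corfd} from its local counterpart, Corollary \ref{corlfd}, in exactly the same way that Proposition \ref{fundamental diagram} was deduced from Proposition \ref{local fundamental diagram}: blowing up is a local construction, and by Theorem \ref{lehn sorger}(a) every point $p \in \Omega_v$ has an analytic neighborhood in $\Kv$ isomorphic to a neighborhood of $0 \in Z$, while along $\Sigma_v \setminus \Omega_v$ the variety $\Kv$ has an $A_1$ singularity whose transverse slice is the threefold $\sum x_i^2 = 0$. Thus the two blow-ups that appear in Proposition \ref{fundamental diagram}(2) are, locally over a neighborhood of each point of $\Omega_v$, precisely the blow-ups studied in Proposition \ref{local fundamental diagram}, and locally over a neighborhood of $\Sigma_v \setminus \Omega_v$ they are the blow-ups of a smooth center in a threefold $A_1$-family (crepant resolution of the $A_1$ surface singularity times a smooth fourfold), whose exceptional divisor is a $\mathbb{P}^1$-bundle.

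For item (1): first I would note that $\widetilde{\Sigma}_v \subset \Kvt$, the exceptional divisor of $f_v$, is a conic bundle over $\Sigma_v$ degenerating to a pair of lines precisely over $\Omega_v$ — equivalently, it is singular exactly along $\widetilde{\Omega}_v$ with an $A_1$ singularity there — because this is the global incarnation of Remark \ref{R2}, transported by the local analytic isomorphisms of Theorem \ref{lehn sorger}(a). Then the identification $Bl_{\widetilde{\Omega}_v} Bl_{\Sigma_v} \Kv = Bl_{\overline{\Sigma}_v} Bl_{\Omega_v} \Kv$ from Proposition \ref{fundamental diagram}(2) shows that $\widehat{\Sigma}_v$, the exceptional divisor of $\xi_v$, is simultaneously the strict transform of $\widetilde{\Sigma}_v$ under $\rho_v$. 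Since $\overline{\Sigma}_v$ is smooth (Proposition \ref{fundamental diagram}(1)) and $Bl_{\Omega_v}\Kv$ has an $A_1$ singularity along it, the blow-up $Bl_{\overline{\Sigma}_v}$ has exceptional divisor a $\mathbb{P}^1$-bundle over $\overline{\Sigma}_v$; and blowing up the smooth center $\widetilde{\Omega}_v$ inside the conic bundle $\widetilde{\Sigma}_v$ separates the two branches through the degenerate fibers, which is exactly the statement $\widehat{\Sigma}_v = Bl_{\widetilde{\Omega}_v}\widetilde{\Sigma}_v$. All of this is checked locally and then glued, the local statements being Corollary \ref{corlfd}(1).

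For item (2): $\widetilde{\Omega}_v = f_v^{-1}(\Omega_v)$ is a disjoint union of $256$ copies of $G$ (the smooth $3$-dimensional quadric), each sitting inside $\Kvt$ as a codimension-$3$ smooth subvariety by Remark \ref{R1}, and more precisely as the zero section of the $\mathbf{Sym}^2_G\mc U$ local model. Blowing up a smooth codimension-$3$ center produces a $\mathbb{P}^2$-bundle, namely the projectivized normal bundle; since the normal bundle of the zero section of $\mathbf{Sym}^2_G \mc U$ in its total space is $\Sym^2_G \mc U$ itself, we get $\widehat{\Omega}_v \cong \mathbb{P}(\Sym^2_G \mc U)$ over each of the $256$ components. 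Again this is exactly Corollary \ref{corlfd}(2) applied in each local chart around a point of $\Omega_v$, and it globalizes because the local analytic isomorphisms of Theorem \ref{lehn sorger}(a) identify not just $\Kv$ near $p$ with $Z$ near $0$ but also the tautological bundle $\mc U$ on $G$ (the fibers of $f$ over $\Omega$), so the patching data for $\Sym^2_G\mc U$ is canonical. The only genuine subtlety — the main thing to be careful about — is that one must make sure the local models near $\Sigma_v \setminus \Omega_v$ and near $\Omega_v$ are compatible on overlaps, i.e. that the conic bundle structure of $\widetilde{\Sigma}_v$ over $\Sigma_v$ is globally well-defined and degenerates exactly over $\Omega_v$; this follows from the explicit description of $\Sigma_v \cong (A\times A^\vee)/\pm 1$ and the fact that its singular locus is precisely the $256$ two-torsion points, together with Remark \ref{R2}, so no new input is needed beyond what has already been established.
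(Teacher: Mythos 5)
Your argument is correct and is essentially the paper's own: the paper proves Corollary \ref{corfd} in one line by invoking the Lehn--Sorger local description (Theorem \ref{lehn sorger}) together with the local statement Corollary \ref{corlfd}, exactly the localize-and-glue strategy you spell out. Your additional details (the $A_1$ transverse slice along $\Sigma_v\setminus\Omega_v$, the identification of $\widehat{\Sigma}_v$ with the strict transform of $\widetilde{\Sigma}_v$, and the normal bundle computation for $\widetilde{\Omega}_v$) are just an expansion of the same proof, not a different route.
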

\begin{proof} This follows from item (2) of Theorem \ref{lehn sorger} and Corollary \ref{corlfd}.
\end{proof}
\begin{rem}
 The proof of the existence of an isomorphism between the  smooth projective varieties $Bl_{\widetilde{\Omega}_{v}}Bl_{\Sigma_{v}} \Kv$ and
$Bl_{\overline{\Sigma}_{v}}  Bl_{\Omega_{v}} \Kv$ follows the original strategy used by O'Grady in \cite{OG1}.
For $v=(2,0,-2)$, he proved that a symplectic desingularization of $\Kv$ can be obtained by  contracting
the strict transform  $\widehat{\Omega}_{v}$ of $\overline{\Omega}_{v}$ in  $Bl_{\overline{\Sigma}_{v}}  Bl_{\Omega_{v}} \Kv$.
Proposition \ref{fundamental diagram} shows, in particular, that O'Grady's procedure gives a symplectic 
desingularization of $\Kv$ that is isomorphic to the Lehn-Sorger desingularization $Bl_{\Sigma_{v}} \Kv$.
The proof of  Proposition \ref{fundamental diagram} is elementary because it uses the crucial description, due to Lehn and Sorger, of the analytic type of the singularities appearing in $\Kv$.
\end{rem}


\section{The local covering} \label{local}

This section is devoted to the local description of the double cover,  branched along the singular locus, of O'Grady's singularity.

It is known \cite[Cor. 6.1.6]{col} that the fundamental group of the open orbit
$\mathfrak o(2,2)$ is isomorphic to $\Z/(2)$. We wish to extend this double cover to
a ramified double cover of $\overline {\mathfrak o(2,2)}=Z$.

To this aim, let
\[
W :=\{ \,\, v \otimes w \,\, | \,\,\sigma(v, w)=0 \, \} \subset V \otimes V, \quad
\text{ and }  \quad \Delta_W=\{ \,\, v \otimes v\,\, \} \subset W.
\]
be the affine cone over the incidence subvariety
\[
I:=\{ ([v], [w]) \,\, | \,\, \sigma(v, w)=0 \, \} \subset \P V \times \P V\subset \P(V\otimes V).
\]
Since $I$ is smooth, the singular locus $\Gamma$ of $W$  consists only of the vertex $0\in V \otimes V$. 

Moreover, 
since $I\subset \P(V\otimes V)$ is projectively normal, $W$
is a normal variety.

Let
\[
\tau: W \to W
\]
be the involution induced by restricting the linear involution $\tau_{V\otimes V}$ on $V\otimes V$
 that interchanges the two factors.

The following lemma exhibits $W$ as the desired double cover of $Z$.
\begin{lem} \label{W e Z}
 The morphism
\[
\begin{aligned}
\varepsilon: W & \longrightarrow Z \\
 v \otimes w & \longmapsto  \sigma(v, \cdot ) w+ \sigma(w, \cdot) v
\end{aligned}
\]
realizes $Z$ as the quotient $W \slash \tau$. In particular, $\varepsilon$ is a
finite $2:1$ morphism, the ramification locus of $\varepsilon$ is $\Delta$ and the branch locus of $\varepsilon$ is  $\Sigma$.
\end{lem}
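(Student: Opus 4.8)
The statement asserts that $\varepsilon\colon W\to Z$ realizes $Z$ as $W/\tau$; to prove this I would break it into four pieces: (i) $\varepsilon$ maps $W$ into $Z$; (ii) $\varepsilon$ is $\tau$-invariant and surjective; (iii) $\varepsilon$ separates $\tau$-orbits, i.e. $\varepsilon(v\otimes w)=\varepsilon(v'\otimes w')$ forces $\{[v],[w]\}=\{[v'],[w']\}$ (with compatible scalings), so the set-theoretic fibers are exactly the $\tau$-orbits; and (iv) upgrade this to the scheme-theoretic statement $Z\cong W/\tau$ and read off ramification and branch loci. Throughout, the defining conditions of $Z$ from Remark \ref{R1} — namely $A^2=0$ together with $\sigma(Av_1,v_2)=\sigma(Av_2,v_1)$ — will be the workhorse.

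\textbf{Steps (i)--(ii).} For $v\otimes w\in W$ (so $\sigma(v,w)=0$), set $A=\sigma(v,\cdot)w+\sigma(w,\cdot)v\in\mathfrak{gl}(V)$. First, condition (2) of Remark \ref{R1}: $\sigma(Av_1,v_2)=\sigma(v,v_1)\sigma(w,v_2)+\sigma(w,v_1)\sigma(v,v_2)$, which is manifestly symmetric in $v_1,v_2$, so $A\in\mathfrak{sp}(V)$. Next $A^2$: compute $A(Av_1)=\sigma(v,Av_1)w+\sigma(w,Av_1)v$, and $\sigma(v,Av_1)=\sigma(v,w)\sigma(v,v_1)+\sigma(v,v)\sigma(w,v_1)=0$ using $\sigma(v,w)=0$ and $\sigma(v,v)=0$; similarly $\sigma(w,Av_1)=0$. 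Hence $A^2=0$ and $A\in Z$. One checks $\rk A\le 2$, and $\rk A=2$ exactly when $v,w$ are linearly independent, so $\varepsilon$ hits the dense rank-$2$ orbit; since that orbit is dense in the irreducible $Z$ and $\varepsilon$ is a morphism of affine varieties with image containing a dense open, surjectivity onto $Z$ will follow once I check every rank-$\le 1$ matrix in $Z$ is hit (rank $1$: $A=\sigma(v,\cdot)v$ comes from $v\otimes v\in\Delta_W$; rank $0$ from $0$). That $\varepsilon\circ\tau=\varepsilon$ is immediate from the symmetry of the defining formula in $v$ and $w$.

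\textbf{Step (iii): fibers are $\tau$-orbits.} This is the crux. Given $A=\varepsilon(v\otimes w)$ of rank $2$, I want to recover the unordered pair $\{[v],[w]\}$ from $A$. The image of $A$ is $\langle v,w\rangle$, a $2$-plane, and on it $A$ acts; restricting the quadratic-form picture of Remark \ref{R1}, $A$ factors through the symmetric form $\varphi_A$ on $U^\vee$ (with $U=\im A$ Lagrangian since $\sigma(v,w)=0$). In coordinates, on $U=\langle v,w\rangle$ with the basis dual to the symplectic pairing, $\varphi_A$ is the rank-$2$ form $xy$ (off-diagonal), whose two isotropic lines are precisely $\C v$ and $\C w$ — recovering $\{[v],[w]\}$ intrinsically from $A$. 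Matching the scalars (the form determines $v\otimes w$ up to swapping, there is no residual scaling ambiguity because $\varphi_A$ is $v\otimes w+w\otimes v$ exactly) shows $\varepsilon^{-1}(A)=\{v\otimes w,\,w\otimes v\}$, a single $\tau$-orbit. For rank-$1$ points the fiber is a single fixed point $v\otimes v\in\Delta_W$, and over $0$ the fiber is $\{0\}$, again a fixed point. So the set-theoretic fibers of $\varepsilon$ are exactly the $\tau$-orbits, the ramification locus (where the orbit degenerates) is $\Delta_W=\Fix(\tau)$, and its image, the branch locus, is $\{\rk A\le 1\}=\Sigma$.

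\textbf{Step (iv): the quotient statement.} It remains to promote ``fibers $=$ orbits'' to $Z\cong W/\tau$ as schemes. Here I would invoke normality of $Z$ (stated in the excerpt via Hesselink's criterion) together with normality of $W$ (stated there, via projective normality of $I$): $\varepsilon$ is a finite morphism, $\tau$-invariant, between normal affine varieties, generically $2:1$, so it factors as $W\to W/\tau\to Z$ with $W/\tau\to Z$ finite and birational (an isomorphism on the rank-$2$ locus); since $Z$ is normal, $W/\tau\to Z$ is an isomorphism by Zariski's main theorem. Equivalently, $\varepsilon_*\mathcal O_W=\mathcal O_Z\oplus(\text{anti-invariants})$ and the invariant part is $\mathcal O_Z$ because $\varepsilon$ is affine and $Z$ normal. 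The finite $2:1$ claim and the identification of ramification/branch loci then follow from Steps (i)--(iii). \emph{The main obstacle} I anticipate is Step (iii): cleanly extracting the unordered pair $\{[v],[w]\}$ — and the precise scalars — from $A$ in a basis-free way, i.e. making the ``isotropic lines of $\varphi_A$'' argument rigorous and checking there is no leftover $\C^\ast$-ambiguity; everything else (the two identities defining $Z$, $\tau$-invariance, normality bookkeeping) is routine given the earlier results.
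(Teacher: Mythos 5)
Your proposal is correct and follows essentially the same route as the paper: it recovers the unordered pair of lines $\{[v],[w]\}$ from the rank--$2$ symmetric form $\varphi_A$ on $U^\vee$ (isotropic vectors and their kernels), treats the rank $\le 1$ locus separately to see the fibers there are single $\tau$--fixed points, and then upgrades ``fibers $=$ $\tau$--orbits'' to $Z\cong W/\tau$ via normality of $Z$ (Zariski's main theorem), exactly as in the paper. The only differences are cosmetic: you verify $\varepsilon(W)\subset Z$ explicitly (the paper leaves it to the reader) and you spell out the scalar--matching step that the paper treats implicitly.
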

\begin{proof} 
We leave it to the reader to check that $\varepsilon (W)\subset Z$.
For a rank $2$ endomorphism $A \in Z\setminus \Sigma$, let us show that $\varepsilon^{-1}(A)$ consists of $2$ points interchanged by $\tau$.
Let $U\subset V$ be the kernel of $A$, which is a Lagrangian subspace.
As shown in Remark \ref{R1}, $A$ induces a linear map $\varphi_{A}\in \Hom(U^{\vee},U)=U\otimes U$ that gives a rank $2$ bilinear symmetric form
on $U^{\vee}$ and, conversely, any symmetric bilinear form on $U^{\vee}$ determines a a rank $2$ endomorphism $A\in Z$ whose kernel is $U$.
A rank $2$ symmetric bilinear form on $U^{\vee}$ is determined, up to scalars, by  $2$ independent distinct isotropic vectors $L_{1}$ and $L_{2}$, hence by their kernels $\ker(L_1)\subset U$ and $\ker(L_2)\subset U$. Now it suffices  to notice that, for $v$ and $w$ spanning $U$ and for $A=\varepsilon (v\otimes w)$,
the lines $\ker(L_1)$ and  $\ker(L_2)$ are the lines generated by $v$ and $w$.

 Since $\ker A$ and $\im A$ are orthogonal (see Remark
\ref{R1}), if $A\in \Sigma$ is a rank $1$ endomorphism or the $0$ endomorphism, then there exists a unique up to scalars $v\in V$ such that $A=\sigma(v, \cdot ) v$. This shows that $\varepsilon^{-1}(A)$ consists  of a unique point, which is fixed by $\tau$.

 To show that $Z \cong W \slash
\tau$, notice that $\varepsilon$ is $\tau$--invariant and its fibers are the orbits of the action of $\tau$, hence $\varepsilon$ induces a bijective
morphism $W \slash \tau \to Z$ . Since $Z$ is normal (see Criterion $2$ of \cite{hes}), this morphism is an
isomorphism. 
\end{proof}
\begin{rem}\label{ridimo}
Using  Lemma \ref{W e Z}, we may reprove that the fundamental group of $\mathfrak o(2,2)= Z\setminus \Sigma$ is isomorphic to $\Z/(2)$.
As $\varepsilon$ is \'etale on $Z\setminus \Sigma$,  it suffices to show that $\varepsilon ^{-1}(Z\setminus \Sigma)=W\setminus \Delta$ is simply connected.
$W\setminus \Delta$ can be obtained from the smooth variety $W\setminus \{0\}$ by removing a codimension $2$ subvariety, 
hence there is an isomorphism of fundamental
groups $\pi_1(W\setminus \Delta)\simeq \pi_1(W\setminus \{0\})$. Finally the map $k:W\setminus \{0\}\rightarrow \mathbb{P}(V)$ defined by $k(v\otimes w)=[v]$
is a locally trivial fibration with fiber isomorphic to the complement of $0$ in a $3$ dimensional vector space. Therefore $k$ has simply 
connected base and fiber and $\pi_1(W\setminus \{0\})=0$.    
\end{rem}

The morphism $\varepsilon$ induces double coverings of the varieties  $\widetilde{Z}=Bl_{\Sigma} Z$, 
  $Bl_{\Omega} Z$ and $Bl_{\widetilde{\Omega}}Bl_{\Sigma} Z = Bl_{\overline{\Sigma}}Bl_{\Omega}$.
The following corollary discusses the case of $Bl_{\Omega} Z$.
\begin{cor}\label{overlineepsilon}
The morphism $\varepsilon$ lifts to a finite $2:1$ morphism
\[
\overline{\varepsilon}: Bl_{\Gamma} W \to Bl_{\Omega} Z,
\]
whose branch  locus is the strict transform  $\overline{\Sigma}$  of $\Sigma$ in $Bl_{\Omega} Z$. 
\end{cor}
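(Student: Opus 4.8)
\textbf{Plan of proof for Corollary \ref{overlineepsilon}.} The strategy is to construct $\overline{\varepsilon}$ by the universal property of blowing up, and then to identify the ramification and branch loci using Lemma \ref{W e Z}. First I would observe that $\varepsilon^{-1}(\Omega)=\varepsilon^{-1}(0)=\Gamma=\{0\}$ as sets, since by Lemma \ref{W e Z} the fiber over $0\in\Sigma$ is a single point, which is exactly the vertex of the cone $W$. Moreover $\varepsilon$ is a finite morphism, so the scheme-theoretic inverse image $\varepsilon^{-1}(\Omega)$ is supported at the vertex; I would check (working in coordinates, or using that $\varepsilon$ is given by quadratic forms that generate the ideal defining $0\in Z$ inside the ideal defining $0\in W$ up to radical) that the ideal sheaf $\varepsilon^*\mc I_{\Omega}\cdot\mc O_W$ has $\mc I_{\Gamma}$ as its integral closure, or more simply invoke that $\varepsilon$ is $\C^*$--equivariant for the natural scaling actions (with $\varepsilon$ homogeneous of degree $2$): both cones blow up at the vertex to the total spaces of tautological line bundles over $\mathbb P(W)$ and $\mathbb P(Z)$, and $\varepsilon$ descends to the finite $2:1$ map $\mathbb P(\varepsilon):\mathbb P(W)\to\mathbb P(Z)$ induced by $[v\otimes w]\mapsto[\sigma(v,\cdot)w+\sigma(w,\cdot)v]$. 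Pulling back the tautological bundle of $\mathbb P(Z)$ along $\mathbb P(\varepsilon)$ and comparing with that of $\mathbb P(W)$ (they differ by the degree--$2$ homogeneity, which is absorbed into the identification $Bl_{\Gamma}W=$ total space of the tautological bundle of $\mathbb P(W)$) then yields the finite $2:1$ lift $\overline{\varepsilon}:Bl_{\Gamma}W\to Bl_{\Omega}Z$ directly, together with commutativity of the square with $\eta$ and the blow--down $Bl_{\Gamma}W\to W$.

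Once $\overline{\varepsilon}$ is constructed, I would analyze its ramification. Away from the exceptional divisors, $\overline{\varepsilon}$ agrees with $\varepsilon$, so by Lemma \ref{W e Z} it is étale over $(Bl_{\Omega}Z)\setminus(\overline{\Sigma}\cup\overline{\Omega})$ and ramified exactly along the strict transform $\overline{\Delta}$ of $\Delta_W$, whose image is the strict transform $\overline{\Sigma}$ of $\Sigma$. It remains to control behavior over the exceptional divisor $\overline{\Omega}=\mathbb P(Z)$. Using the projective description, the restriction of $\overline{\varepsilon}$ to exceptional divisors is $\mathbb P(\varepsilon):\mathbb P(W)\to\mathbb P(Z)$, which is the $2:1$ map ramified precisely along $\mathbb P(\Delta_W)\cong\mathbb P(V)$, with image $\mathbb P(\Sigma)=\mathbb P(\overline{\Sigma}\cap\overline{\Omega})$ — again a consequence of Lemma \ref{W e Z} applied fiberwise. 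So the ramification divisor of $\overline{\varepsilon}$ is $\overline{\Delta}$ (now including its portion inside the exceptional divisor) and the branch divisor is $\overline{\Sigma}$, as claimed.

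\textbf{Main obstacle.} The only genuinely delicate point is verifying that the scheme--theoretic preimage $\varepsilon^{-1}(\Omega)$ is exactly $\Gamma$ with the right (reduced, or at least principal after blow--up) structure, so that the universal property of $Bl_{\Omega}Z$ produces a morphism from $Bl_{\Gamma}W$ rather than from some other modification of $W$. Concretely one must see that $\varepsilon^*\mc I_{\Omega}$ generates (the radical of, or after normalization equals) $\mc I_{\Gamma}$; this is where the homogeneity of $\varepsilon$ and the normality of $W$ (from projective normality of $I$, established above) do the work, letting one pass to the projectivizations $\mathbb P(W)\to\mathbb P(Z)$ where everything is a clean finite morphism of smooth--in--codimension--one varieties. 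I would therefore phrase the argument through the $\C^*$--quotients, deducing the statement about $Bl_{\Gamma}W\to Bl_{\Omega}Z$ from the corresponding statement about $\mathbb P(\varepsilon)$ together with the identification of each blow--up of a cone at its vertex with the total space of the tautological line bundle over its projectivization.
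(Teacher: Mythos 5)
Your overall route is essentially the paper's: identify $Bl_{\Gamma}W$ and $Bl_{\Omega}Z$ with the total spaces of the tautological line bundles over $\P(W)=I$ and $\P(Z)$ (equivalently, with incidence varieties inside $I\times W$ and $\P(Z)\times Z$), descend $\varepsilon$ to $\P(\varepsilon):I\to\P(Z)$, lift to the blow-ups, and read off branch and ramification from Lemma \ref{W e Z}, applied fiberwise over the exceptional divisor as well; the paper does exactly this by restricting $\P(\varepsilon)\times\varepsilon$ to the two incidence varieties, so your decision to avoid the universal-property/scheme-theoretic-preimage route is the same choice the paper makes.

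There is, however, a concrete error at the heart of your construction: $\varepsilon$ is \emph{not} homogeneous of degree $2$ for the cone scalings. The domain $W$ sits inside $V\otimes V$, and $\varepsilon$ is the restriction to $W$ of the \emph{linear} map $\varepsilon_{V\otimes V}:V\otimes V\to\mathfrak{sp}(V)$, $v\otimes w\mapsto\sigma(v,\cdot)w+\sigma(w,\cdot)v$; the formula is quadratic in the pair $(v,w)$, but so is the point $v\otimes w$ of $W$, and with respect to scaling of the tensor (the scaling relevant to the blow-up at the vertex) the map has degree $1$. Consequently $\P(\varepsilon)^{*}\mc{O}_{\P(Z)}(-1)\cong\mc{O}_{I}(-1)$ on the nose, and there is no ``degree-$2$ discrepancy'' between tautological bundles to be absorbed into any identification. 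If your parenthetical claim were taken literally, the induced map on total spaces would be fiberwise $t\mapsto t^{2}$, hence of degree $4$ over $Z$ (equivalently, ramified along the whole exceptional divisor $\overline{\Omega}$), contradicting both the assertion that $\overline{\varepsilon}$ is a $2:1$ lift of $\varepsilon$ and the assertion that the branch locus is exactly $\overline{\Sigma}$. The fix is immediate: use the linearity of $\varepsilon_{V\otimes V}$ together with $\ker\varepsilon_{V\otimes V}\cap W=\{0\}$ (which your fiber computation over $0$ already gives, and which is what makes $\P(\varepsilon)$ well defined) to obtain the isomorphism of tautological bundles and hence the lift. With that correction, your ramification analysis --- étale away from $\overline{\Delta}$, and over the exceptional divisor reduced to $\P(\varepsilon):I\to\P(Z)$ branched along $\P(\Sigma)\subset\overline{\Sigma}$ --- is correct and coincides with the paper's argument.
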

\begin{proof}
The morphism $\varepsilon$ is the restriction to $W$ of the linear map  \[\varepsilon_{V\otimes V}:V\otimes V\rightarrow \mathfrak{sp}(V)\]
sending $v\otimes w$ to $\sigma(v, \cdot ) w+ \sigma(w, \cdot) v$ for any $v\otimes w\subset V\otimes V$.
As $\ker \varepsilon_{V\otimes V}\cap W=0$, the map $\epsilon$ induces a morphism $\P(\varepsilon): I\rightarrow \P(Z)$ between the projectivization of
$W$ and $Z$. There are the identifications
\[ Bl_{\Gamma} W=Bl_{0}W=\{\,  (\mathbb{C}\alpha, v\otimes w)\in I \times W \,\, | \,\, v\otimes w\in \mathbb{C}\alpha\, \}  \]
and 
\[Bl_{\Omega}Z=Bl_{0}Z=\{\,  (\mathbb{C}A, B)\in \mathbb{P}(Z)\times Z \,\, | \,\, B\in \mathbb{C}A\, \}  .\]

It follows that $\P(\varepsilon)\times \varepsilon$ restricts to a map
$\overline{\varepsilon}:Bl_\Gamma W \to Bl_\Omega Z$ and, by Lemma \ref{W e Z},
$\overline{\varepsilon}$ is a finite $2:1$ map whose  branch locus is
\[\overline{\Sigma}_W=Bl_{\Omega}\Sigma=Bl_{0}\Sigma=
\{\,  (\mathbb{C}A, B\in \mathbb{P}(\Sigma)\times \Sigma \,\, | \,\, B\in \mathbb{C}A\, \}.  \]
\end{proof}
\begin{rem}\label{lisci su} Since $W$ is the cone over a smooth variety, both its blow up at the origin $Bl_{\Gamma}W$ and the exceptional divisor $\overline{\Gamma}\subset Bl_{\Gamma}W$ are smooth. 
 Finally the strict transform $\overline{\Delta}$ of $\Delta$ in $Bl_{\Gamma}W$ is isomorphic to 
$Bl_{\Gamma}\Delta$ and, since   
$\Delta$ is the cone over a smooth variety,
also $\overline{\Delta}$ is smooth.
\end{rem}
The following corollary treats the case of the induced double cover of $\Zt$

\begin{cor}\label{lift}
Let $\pi:SC_{G}\mathcal{U}^{\otimes 2}\rightarrow G$ be the relative affine Segre cone parametrizing 
decomposable tensors in the total space of the rank $4$ vector bundle $\mathcal{U}^{\otimes 2}$.
\begin{enumerate}
\item{$SC_{G}\mathcal{U}^{\otimes 2}$ is isomorphic to $Bl_{\Delta}W$,}

\item{Using this identification, the map \[\widetilde{\varepsilon}:Bl_{\Delta}W(=SC_{G}\mathcal{U}^{\otimes 2})\rightarrow \Zt(=\mathbf{Sym}^2_G \mc U),\]   
induced by symmetrization on the fibers, is a finite $2:1$ morphism lifting $\varepsilon$, whose branch locus is $\widetilde{\Sigma}$.}
\end{enumerate}
\end{cor}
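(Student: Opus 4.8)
The statement to prove is Corollary~\ref{lift}: that the relative affine Segre cone $SC_{G}\mathcal{U}^{\otimes 2}$ over $G$ is isomorphic to $Bl_{\Delta}W$, and that the symmetrization map $\widetilde{\varepsilon}\colon Bl_{\Delta}W \to \Zt$ is a finite $2{:}1$ morphism lifting $\varepsilon$ with branch locus $\widetilde{\Sigma}$. My strategy is to run the argument in exact parallel with Remark~\ref{R1} and Corollary~\ref{lift}'s twin Corollary~\ref{overlineepsilon}, exploiting that on the resolution side everything has already been given a tautological-bundle description: $\Zt \cong \mathbf{Sym}^2_G\mathcal{U}$, with $\wt\Sigma$ the locus of singular symmetric forms on $\mathcal{U}^\vee$, and $f$ the blow-up of $Z$ along $\Sigma$ (Theorem~\ref{lehn sorger}(b)). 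The Segre cone $SC_G\mathcal{U}^{\otimes 2}\subset \mathbf{U}^{\otimes 2}$ is by definition the fiberwise cone of decomposable tensors $u_1\otimes u_2$ with $u_1,u_2\in\mathcal{U}$; I want to see it simultaneously as a modification of $W$ and as a cover of $\mathbf{Sym}^2_G\mathcal{U}$.

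\textbf{Step 1: the map to $W$ and the identification $SC_G\mathcal{U}^{\otimes 2}\cong Bl_\Delta W$.} There is a tautological map $SC_G\mathcal{U}^{\otimes 2}\to V\otimes V$ sending a point of the fiber over $U\in G$ to its image under $\mathcal{U}_U\otimes\mathcal{U}_U=U\otimes U\hookrightarrow V\otimes V$. Since $U$ is Lagrangian, any $v\otimes w$ with $v,w\in U$ satisfies $\sigma(v,w)=0$, so the image lands in $W$; thus we get a morphism $\beta\colon SC_G\mathcal{U}^{\otimes 2}\to W$. I claim $\beta$ is the blow-up of $W$ along $\Delta_W$. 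Over the locus of rank-$2$ decomposable tensors $v\otimes w$ (with $[v]\ne[w]$), the Lagrangian plane $U=\langle v,w\rangle$ is uniquely determined, so $\beta$ is an isomorphism there; over $\Delta_W\setminus\{0\}$, i.e. $v\otimes v$, the Lagrangians containing $v$ form a $\mathbb{P}^1$ (a line on the quadric $G$ through $[v]$), so $\beta^{-1}$ of that locus is a $\mathbb{P}^1$-bundle, a Cartier divisor because $SC_G\mathcal{U}^{\otimes 2}$ is smooth (being a vector-bundle–like total space over $G$, or rather a cone bundle over $G$ which is smooth away from the zero section; one checks smoothness of the Segre cone total space directly or notes it is the affine Segre cone, a smooth variety away from its vertex, fibered over $G$). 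By the universal property of blow-ups, $\beta$ factors through $Bl_{\Delta_W}W$, and since $\beta$ does not contract any divisor and the target blow-up is —after the same normality input, $W$ normal by projective normality of $I$— at worst mildly singular along $\Delta_W$ (in fact $W$ has an $A_1$-type behaviour there analogous to Remark~\ref{R2}), a dimension/degree argument gives that the induced map $SC_G\mathcal{U}^{\otimes 2}\to Bl_{\Delta_W}W$ is a proper birational morphism that is an isomorphism; I will phrase this as in the end of the proof of Proposition~\ref{local fundamental diagram} (proper birational, no contracted divisor, target normal $\Rightarrow$ isomorphism).

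\textbf{Step 2: the lift $\widetilde{\varepsilon}$ and its branch locus.} Fiberwise symmetrization $U\otimes U\to \Sym^2 U$ induces a bundle map $\mathbf{U}^{\otimes 2}\to\mathbf{Sym}^2_G\mathcal{U}$, which restricts to $\widetilde{\varepsilon}\colon SC_G\mathcal{U}^{\otimes 2}\to\mathbf{Sym}^2_G\mathcal{U}$ sending $v\otimes w$ to $v\cdot w$. Using $\Zt\cong\mathbf{Sym}^2_G\mathcal{U}$ and $Bl_\Delta W\cong SC_G\mathcal{U}^{\otimes 2}$, and the factorization $\varepsilon_{V\otimes V}\colon V\otimes V\to\mathfrak{sp}(V)$ of Corollary~\ref{overlineepsilon} which on $U\otimes U$ is precisely $2\times$ symmetrization followed by the inclusion $\Sym^2 U=\Hom(U^\vee,U)\hookrightarrow\mathfrak{sp}(V)$ (this identification is exactly Remark~\ref{R1}), the diagram
\[
\xymatrix{
Bl_\Delta W \ar[r]^{\widetilde{\varepsilon}} \ar[d] & \Zt \ar[d]^{f} \\
W \ar[r]_{\varepsilon} & Z
}
\]
commutes, so $\widetilde{\varepsilon}$ lifts $\varepsilon$. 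Over a point $U\in G$, $\widetilde{\varepsilon}$ is the fiber map $SC(U\otimes U)\to\Sym^2 U$, $u_1\otimes u_2\mapsto u_1u_2$: this is exactly the classical $2{:}1$ cover of the space of symmetric $2\times 2$ forms by decomposable tensors, $(u_1,u_2)$ and $(u_2,u_1)$ giving the same symmetric form, finite of degree $2$, ramified precisely over the rank-$\le 1$ symmetric forms (the "perfect square" locus $u\cdot u$). Varying $U$ over $G$, finiteness and degree $2$ are preserved, and the ramification/branch locus is the fiberwise rank-$\le1$ locus, which by Remark~\ref{R2} is exactly $\wt\Sigma\subset\mathbf{Sym}^2_G\mathcal{U}=\Zt$. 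This gives the branch-locus assertion.

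\textbf{Main obstacle.} The one point needing genuine care—rather than a transcription of earlier arguments—is the smoothness and normality bookkeeping in Step~1 that licenses "proper birational $+$ no contracted divisor $+$ normal target $\Rightarrow$ isomorphism": I need $SC_G\mathcal{U}^{\otimes 2}$ to be the total space of a cone bundle over the smooth quadric $G$ and hence smooth in codimension controlled well enough (it is singular along the zero-section, in the same $A_1$ way as $\wt\Sigma$ and $\wt\Omega$), and I need $Bl_{\Delta_W}W$ to be identified concretely—here the description of $Bl_\Gamma W$ already obtained in Corollary~\ref{overlineepsilon} and Remark~\ref{lisci su}, together with the projective normality of $I$, supplies what is needed. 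Everything else is fiberwise linear algebra over $G$ that mirrors Remark~\ref{R1}, Remark~\ref{R2}, and Lemma~\ref{W e Z} verbatim.
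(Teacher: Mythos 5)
Your Step 2 and the overall geometric picture (identify $Bl_\Delta W$ with the relative Segre cone over $G$ and read off the $2{:}1$ symmetrization cover fiberwise) agree with the paper, but Step 1 has a genuine gap at exactly the point you flag as the ``main obstacle''. First, your justification that the schematic preimage $\beta^{-1}(\Delta)$ is Cartier rests on the claim that $SC_G\mc U^{\otimes 2}$ is smooth; it is not: it is a bundle of affine cones over a smooth quadric surface, hence singular along its zero section $\wt\Gamma$ (this is precisely Remark \ref{singtilde}), and the divisor in question passes through $\wt\Gamma$, so Cartier-ness must be argued there. The correct argument, as in the paper, is fiberwise linear algebra: in each fiber the locus of symmetric decomposable tensors is cut out in $SC\,U^{\otimes 2}$ by the single linear function $v\otimes w\mapsto v\wedge w$, and local triviality over $G$ globalizes this. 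That part is repairable. Second, and more seriously, your concluding criterion ``proper birational $+$ no contracted divisor $+$ normal target $\Rightarrow$ isomorphism'' is false in general, and it fails precisely for the singularities present here: transversally along the zero section both varieties have the $3$-fold ordinary double point (cone over $\mathbb P^1\times\mathbb P^1$), the standard example admitting small modifications that contract no divisor. The argument you cite from Proposition \ref{local fundamental diagram} uses that the target is \emph{smooth}; moreover normality of $Bl_\Delta W$ is not available at this stage (in the paper it is deduced \emph{from} Corollary \ref{lift}, see Remark \ref{rem:ultimo}), so even a Zariski-main-theorem variant would need finiteness of your map, which you do not establish.

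The paper closes this gap by producing a regular morphism in the \emph{other} direction and checking the two compositions are the identity on dense opens: it identifies $SC_G\mc U^{\otimes 2}$ with the fiber product $W\times_Z\Zt$, maps it to $Bl_\Delta W$ by the universal property (using the Cartier statement above), and then builds the inverse $Bl_\Delta W\to W\times_Z\Zt$ by lifting $\varepsilon$ to $\Zt=Bl_\Sigma Z$ via the universal property of that blow up. The key input for this lift, entirely absent from your proposal, is the ideal-theoretic identity that the ideal of $\varepsilon^{-1}(\Sigma)$ in $W$ equals $I_\Delta^2$ (proved by expressing $I_\Delta$ through the restrictions of the $\tau_{V\otimes V}$-anti-invariant linear forms and comparing with functions pulled back from $W/\tau$, then using that $\Delta\setminus\{0\}$ has codimension $2$ in the smooth locus and that both subschemes are cones). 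This identity gives $Bl_\Delta W=Bl_{\varepsilon^{-1}(\Sigma)}W$, makes the preimage of $\Sigma$ in $Bl_\Delta W$ Cartier, and hence yields the commuting square $Bl_\Delta W\to\Zt$ over $\varepsilon$. Without this (or some substitute such as a direct proof that your induced map is finite and the target normal), the isomorphism $SC_G\mc U^{\otimes 2}\cong Bl_\Delta W$ is not established.
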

\begin{proof}
(1) By definition of fiber product, $W\times_{Z} \Zt$ is equal to
\[
\{(v\otimes w, A, U)\in W \times Z\times G  \,\, | \,\, \varepsilon (v \otimes w)=A\,\, \text{and }\,\, v,w\in U\,\}
\]
and, by Lemma \ref{W e Z}, the fiber over $U$ of the projection $\pi_{G}:W\times_{Z}\Zt\rightarrow G$ is naturally isomorphic to
the variety  $SC\;U^{\otimes 2 }$ of decomposable tensors in $U\otimes U$. It follows that $SC_{G}\mathcal{U}^{\otimes 2}$ is isomorphic to $W\times_{Z} \Zt$.

Let us show that $W\times_{Z} \Zt$ has a birational morphism to $Bl_{\Delta}W$. Let $\pi_{W}: W\times_{Z} \Zt\rightarrow W$ be the projection,
by the universal property of blow ups, it will suffice to show that the schematic inverse image $\pi_{W}^{-1}(\Delta)$ is a Cartier divisor.

For $U\in G$, the projection $\pi_{W}$ sends the fiber $\pi_{G}^{-1}(U)$ isomorphically onto  $SC\;U^{\otimes 2}$. 
Hence the schematic intersection $\pi_{W}^{-1}(\Delta)\cap \pi^{-1}_{G}(U)$ is isomorphic to
the schematic intersection $\Delta\cap SC\;U^{\otimes 2 }$, i.e. the reduced cone over a smooth conic $C\subset \mathbb{P}(U\otimes U)$  parametrizing symmetric
decomposable tensors in $U\otimes U$. As varying $U\in G$ the intersections $\pi_{W}^{-1}(\Delta)\cap \pi^{-1}_{G}(U)$ form a locally trivial family over $G$,
 the family $\pi_{W}^{-1}(\Delta)\rightarrow G$ is locally trivial. Finally, 
as the cone over $C\subset \mathbb{P}(U\otimes U)$ is a Cartier divisor in the variety of decomposable tensors of $U\otimes U$,
the scheme $\pi_{W}^{-1}(\Delta)$ is a Cartier divisor in $W\times_{Z} \Zt$.

On the other hand, $Bl_{\Delta}W$ has a regular birational morphism to  $W\times_{Z} \Zt$ inverting the previous birational  morphism.
   
This will follow if we prove that the ideal of $\varepsilon^{-1}(\Sigma)$ in $W$ is the square $I_{\Delta}^{2}$ 
of the ideal of $\Delta$. In fact, in this case, the blow up $Bl_{\Delta}W$ equals the blow up
$Bl_{\varepsilon^{-1}(\Sigma)}W$, hence the schematic inverse image of $\Sigma$ in $Bl_{\Delta}W$ is a Cartier divisor.
Therefore, as $\Zt=Bl_{\Sigma}Z$, by the universal property of blow ups, we can conclude that there exists a commutative diagram  
\[\xymatrix{
Bl_{\Delta}W  
\ar[d]_{g} \ar[r]^{\xi'}  & \Zt \ar[d]_{f}\\
   W\ar[r]_{\varepsilon} & Z \\
}
\] inducing the desired  birational regular morphism from 
$Bl_{\Delta}W$ to    $W\times_{Z} \Zt$.

To determine the ideals of $\varepsilon^{-1}(\Sigma)$ and $\Delta$ in $W$ we recall that the involution $\tau$
is the restriction of the  linear involution $\tau_{V\otimes V}$ on $V\otimes V$ that can be interpreted as the transposition 
on $4\times 4$ matrices if we chose a basis for $V$. Moreover, the ideal of $\Delta$ in $W$ is generated by the restrictions of the linear
antiinvariant functions on  $V\otimes V$ (this already holds for the ideal of $\Delta$ in the affine cone over the Segre variety $\mathbb{P}(V)\times \mathbb{P}(V)\subset \mathbb{P}(V\otimes V))$.  Hence, $I_{\Delta}^{2}$ is generated by restrictions of products of pairs of linear antiinvariant functions on $V\otimes V$ and any such product comes from a function on the quotient $(V\otimes V)/\tau_{V\otimes V}$ vanishing along the branch locus $B$. Since $B$ contains the branch locus $\Sigma$ of
$W/\tau$, we conclude that the ideal of $\varepsilon^{-1}(\Sigma)$   contains $I_{\Delta}^{2}$. Equality holds becouse $W\setminus\{0\}$ is smooth and the fixed locus 
$\Delta\setminus \{0\}$ has codimension $2$, hence  $\varepsilon^{-1}(\Sigma)$ equals the subscheme $\Delta_{2}$ defined by $I_{\Delta}^{2}$ outside the origin.
As $\Delta_{2}$ is a subcone of $W$, it is the closure of  $\Delta_{2}\setminus \{0\}$, therefore it is a closed subscheme of $\varepsilon^{-1}(\Sigma)$.

(2) The existence of the regular morphism $\widetilde{\varepsilon}$ lifting $\varepsilon$ follows from (1). The branch locus of $\tilde{\varepsilon}$ is  
$\widetilde{\Sigma}$ because, by our description of $\tilde{\varepsilon}$, it parametrizes singular bilinear symmetric tensors (see Remark \ref{R2}).
\end{proof}
Corollary \ref{lift} also allows us to describe the singularities of the exceptional divisor $\widetilde{\Delta}$ of the blow up $Bl_{\Delta}W$ of $W$ along $\Delta$.   

\begin{rem}\label{singtilde}
$Bl_{\Delta}W \simeq SC_{G}\mathcal{U}^{\otimes 2}$ is a locally trivial bundle over $G$ with fiber the affine cone over a smooth quadric in $\mathbb{P}^{3}$.
Hence it is smooth outside the zero section $\widetilde{\Gamma}$ and any point of $\widetilde{\Gamma}$ has a neighborhood isomorphic to the product of the affine cone over a smooth quadric and
a smooth $3$-dimensional variety.
As $\varepsilon$ and $\widetilde{\varepsilon}$ are finite, the morphism  $\widetilde{\varepsilon}$ sends the exceptional divisor $\widetilde{\Delta}\subset Bl_{\Delta}W$
onto the exceptional divisor $\widetilde{\Sigma}\subset\Zt$.
By the definition of $\widetilde{\varepsilon}$ in item (2) of Corollary \ref{lift}, the divisor $\widetilde{\Delta}$ parametrizes 
symmetric
decomposible tensors in the fibers of $\pi_{G}:SC_{G}\mathcal{U}^{\otimes 2}\rightarrow G$, 
hence it is a locally trivial bundle with fiber the affine cone over a smooth conic.
Therefore it is smooth outside $\widetilde{\Gamma}$ and has an $A_{1}$ singularity along $\widetilde{\Gamma}$.
\end{rem}

The following corollary completes the picture of the double covering induced by $\varepsilon$ in the local case.

\begin{cor} \label{diagram cover}
\begin{enumerate}
\item{There exist finite degree $2$  morphisms $\widehat{\varepsilon}_{1}: Bl_{\widetilde{\Gamma}}Bl_{\Delta}W \rightarrow Bl_{\widetilde{\Omega}}Bl_{\Sigma} Z$
and $\widehat{\varepsilon}_{2}: Bl_{\overline{\Delta}}Bl_{\Gamma}W \rightarrow Bl_{\overline{\Sigma}}Bl_{\Omega} Z$, 
lifting $\widetilde{\varepsilon}$ and   
 $\overline {\varepsilon}$, whose  branch loci are  the strict transform of $\widetilde{\Sigma}$ in $Bl_{\widetilde{\Omega}}Bl_{\Sigma} Z$
and the exceptional divisor $\widehat{\Sigma}$ of $Bl_{\overline{\Sigma}}Bl_{\Omega} Z$ respectively.}
\item{The varieties   $Bl_{\widetilde{\Gamma}}Bl_{\Delta}W$ and  $Bl_{\overline{\Delta}}Bl_{\Gamma}W$ are smooth and isomorphic over $W$. 
Hence there exists a commutative diagram 
\[
\xymatrix{
&   Bl_{\widetilde{\Gamma}}Bl_{\Delta}W  = Bl_{\overline{\Delta}}Bl_{\Gamma}W 
\ar[dr] \ar[d]_{\widehat{\varepsilon}_{1}=\widehat{\varepsilon}_{2}}  \ar[dl] & \\
Bl_{\Delta}W  \ar[d]_{\widetilde{\varepsilon}} \ar[dr] &  
Bl_{\widetilde{\Omega}}Bl_{\Sigma} Z = Bl_{\overline{\Sigma}}Bl_{\Omega} \ar[dl] \ar[dr] Z & Bl_{\Gamma} W \ar[d]_{\overline{\varepsilon}}
\ar[dl] 
\\
\Zt =Bl_{\Sigma}Z \ar[dr]& W \ar[d]_{\varepsilon} & Bl_{\Omega} Z\ar[dl]\\
& Z & \\
}
\]
where the diagonal arrows are blow ups.

}
\end{enumerate}

\end{cor}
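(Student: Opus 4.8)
The plan is to mimic, one level up, the argument already used to prove Proposition \ref{local fundamental diagram}(2) and its consequences, using the explicit cone descriptions of the relevant blow ups. For item (1), I would first note that $\overline{\varepsilon}: Bl_\Gamma W\to Bl_\Omega Z$ (from Corollary \ref{overlineepsilon}) has branch locus $\overline{\Sigma}$, and that $\widetilde{\varepsilon}: Bl_\Delta W\to \Zt$ (from Corollary \ref{lift}) has branch locus $\widetilde{\Sigma}$. Since blowing up is a local construction and both $\overline\varepsilon$, $\widetilde\varepsilon$ are finite, the preimage of the center $\overline{\Sigma}$ (resp.\ the center $\widetilde{\Omega}\cong G$, the zero section) is exactly $\overline{\Delta}$ (resp.\ $\widetilde{\Gamma}$): indeed $\overline\varepsilon^{-1}(\overline\Sigma)=\overline\Delta$ because $\overline\Delta$ is the ramification divisor lying over the branch divisor $\overline\Sigma$, and $\widetilde\varepsilon^{-1}(\widetilde\Omega)=\widetilde\Gamma$ because $\widetilde\varepsilon$ is the fiberwise symmetrization map $SC_G\mc U^{\otimes 2}\to \mathbf{Sym}^2_G\mc U$, which carries zero section to zero section. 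Hence $\overline\varepsilon$ (resp.\ $\widetilde\varepsilon$) pulls back to a finite degree $2$ map $Bl_{\overline\Delta}Bl_\Gamma W\to Bl_{\overline\Sigma}Bl_\Omega Z$ (resp.\ $Bl_{\widetilde\Gamma}Bl_\Delta W\to Bl_{\widetilde\Omega}Bl_\Sigma Z$) by the universal property of blow ups applied on both sides; I would then identify the branch locus of each lift with the total transform of the original branch divisor, which by Corollary \ref{corlfd} is, respectively, the strict transform of $\widetilde{\Sigma}$ and the exceptional divisor $\widehat{\Sigma}$.

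For item (2) the first task is to show $Bl_{\widetilde\Gamma}Bl_\Delta W$ and $Bl_{\overline\Delta}Bl_\Gamma W$ are smooth and canonically isomorphic over $W$. Smoothness: by Remark \ref{singtilde}, $Bl_\Delta W\simeq SC_G\mc U^{\otimes 2}$ has an $A_1$ singularity along its zero section $\widetilde\Gamma$ and is smooth elsewhere, so blowing up the smooth center $\widetilde\Gamma$ resolves it (this is exactly the situation of Proposition \ref{local fundamental diagram}(1), an $A_1$ along a smooth locus of codimension $3$ in a threefold of quadric cones over $G$); on the other side, $Bl_\Gamma W$ is smooth by Remark \ref{lisci su}, $\overline\Delta$ is smooth there, so $Bl_{\overline\Delta}Bl_\Gamma W$ is smooth. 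For the isomorphism I would give the explicit model exactly as in the proof of Proposition \ref{local fundamental diagram}(2): writing $W$ as a cone over $I\subset \P(V\otimes V)$, one has $Bl_\Gamma W=\{([\alpha],\beta)\in I\times W : \beta\in\C\alpha\}$ and $Bl_\Delta W=W\times_Z\Zt\subset W\times G$ with $\P(\Sym^2_G\mc U)$-type structure; both $Bl_{\widetilde\Gamma}Bl_\Delta W$ and $Bl_{\overline\Delta}Bl_\Gamma W$ are then realized as the same incidence variety inside $\P(W)\times W\times G$ (a tautological line bundle over $\P(SC_G\mc U^{\otimes 2})$), the identification being forced by the normality of $W$ and the fact that the map between them is a proper birational morphism of smooth varieties contracting no divisor, hence an isomorphism. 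The common variety then maps finitely $2:1$ onto $Bl_{\widetilde\Omega}Bl_\Sigma Z=Bl_{\overline\Sigma}Bl_\Omega Z$, compatibly with all the blow-down maps, which produces the asserted commutative diagram; the identification $\widehat\varepsilon_1=\widehat\varepsilon_2$ is just the statement that these two descriptions of the same map agree.

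The main obstacle I anticipate is the scheme-theoretic bookkeeping needed to guarantee that the two iterated blow ups upstairs agree \emph{as schemes} and that the finite maps glue: one must check that $\overline\varepsilon^{-1}(\overline\Sigma)$ (resp.\ $\widetilde\varepsilon^{-1}(\widetilde\Omega)$) is the \emph{reduced} divisor $\overline\Delta$ (resp.\ $\widetilde\Gamma$) with the expected multiplicity structure so that the universal property of blow ups applies cleanly — this is the analogue of the ideal computation $I_{\varepsilon^{-1}(\Sigma)}=I_\Delta^2$ carried out in the proof of Corollary \ref{lift}, and a similar local computation (using that $W\setminus\{0\}$ is smooth, $\overline\Delta$ and $\widetilde\Gamma$ are smooth of the right codimension, and the branch/ramification formalism for the double cover) handles it. Once the schematic centers match on both the source and target, every isomorphism and every factorization follows formally from normality plus ``proper birational between smooth, no divisor contracted $\Rightarrow$ isomorphism,'' exactly as in the local fundamental diagram, and the commutativity of the big diagram is automatic.
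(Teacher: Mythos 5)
Your proposal is essentially correct and shares the paper's backbone (local/bundle models, the ideal computation of Corollary \ref{lift}, smoothness from Remarks \ref{singtilde} and \ref{lisci su}), but it takes a genuinely different route at the two key steps, so it is worth comparing. For the lifts in (1), you invoke the universal property of blow ups on both sides after identifying the preimages of the centers; the paper instead builds $\widehat{\varepsilon}_{1}$ from the relative structure over $G$, reducing fiberwise to the blow up at the origin of the symmetrization map $SC\,U^{\otimes 2}\rightarrow \Sym^{2}U$, and builds $\widehat{\varepsilon}_{2}$ directly from the fact that $Bl_{\Omega}Z$ has an $A_{1}$ singularity along the smooth $\overline{\Sigma}$ with $\overline{\varepsilon}$ the double cover branched there. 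Your route does go through, but only with the bookkeeping you yourself flag: the scheme-theoretic preimage $\widetilde{\varepsilon}^{-1}(\widetilde{\Omega})$ is a non-reduced thickening of $\widetilde{\Gamma}$ (fiberwise the pullback of the ideal of $0\in \Sym^{2}U$ to the Segre cone is strictly smaller than the maximal ideal), so the centers do not literally match; what makes the universal property apply is that this pullback ideal becomes invertible after blowing up the reduced $\widetilde{\Gamma}$, a small fiberwise check. Also, your phrase ``branch locus $=$ total transform'' is incorrect for $\widehat{\varepsilon}_{1}$: the exceptional divisor $\widehat{\Omega}$ is not in the branch locus, since fiberwise $\P(\mc U)\times_{G}\P(\mc U)\rightarrow \P(\Sym^{2}_{G}\mc U)$ is generically $2:1$; the branch locus is only the strict transform of $\widetilde{\Sigma}$, which Corollary \ref{corlfd} identifies with $\widehat{\Sigma}$, so your stated conclusion is right even though the justification as written is not. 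For the identification in (2), you propose to redo the explicit incidence-variety model of Proposition \ref{local fundamental diagram}(2) on the $W$--side and finish with ``proper birational between smooth varieties contracting no divisor''; the paper instead uses that, via $\widehat{\varepsilon}_{1}$ and $\widehat{\varepsilon}_{2}$, both iterated blow ups are finite covers of the already-identified variety $Bl_{\widetilde{\Omega}}Bl_{\Sigma}Z=Bl_{\overline{\Sigma}}Bl_{\Omega}Z$, takes the closure of the graph of the natural birational map inside the fiber product, and concludes by Zariski's Main Theorem. The paper's argument buys two things: it avoids constructing a second explicit model, and it produces the isomorphism over $Bl_{\widetilde{\Omega}}Bl_{\Sigma}Z$ rather than merely over $W$, which is exactly what makes $\widehat{\varepsilon}_{1}=\widehat{\varepsilon}_{2}$ and the commutativity of the big diagram immediate; your model-based route is more computational but, once the deferred ideal computations are carried out, it also yields the corollary.
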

\begin{proof}
(1) Recall that $Bl_{\Delta}W\simeq SC_{G}\mathcal{U}^{\otimes 2}$ and 
$Bl_{\Sigma}Z\simeq \mathbf{Sym}^2_G \mc U$ are locally trivial bundles  over $G$
and $\widetilde{\Gamma}$ and $\widetilde{\Omega}$ are their respective zero sections.
As $\widetilde{\varepsilon}: Bl_{\Delta}W\rightarrow Bl_{\Sigma}Z$ is a morphism over $G$, the existence 
of $\widehat{\epsilon}_{1}: Bl_{\widetilde{\Gamma}}Bl_{\Delta}W \rightarrow Bl_{\widetilde{\Omega}}Bl_{\Sigma} Z$
branched over the strict transform  of $\widetilde{\Sigma}$ in $Bl_{\widetilde{\Omega}}Bl_{\Sigma} Z$ 
follows from the existence of a commutative diagram of the form 
\[\xymatrix{
Bl_{0}SC U^{\otimes 2}  
\ar[d] \ar[r]  & Bl_{0} Sym^{2}U \ar[d]\\
  SC U^{\otimes 2} \ar[r] & Sym^{2}U \\
}
\]
where $U$ is a $2$-dimensional vector space, $SC \;U^{\otimes 2}\subset U\otimes U$ is the affine cone, parametrizing decomposable tensors, over the Segre variety 
$\mathbb{P}(U)\times \mathbb{P}(U)$, the vertical arrows are blow ups, 
the horizontal arrows are induced by symmetrization (hence their branch locus parametrizes singular symmetric tensors). 

By Corollary \ref{overlineepsilon}, the branch locus of the finite $2:1$ morphism 
$\overline{\varepsilon}: Bl_{\Gamma}W \rightarrow Bl_{\Omega} Z$
is the singular locus 
$\overline{\Sigma}$ of  $Bl_{\Omega} Z$. By item (1) of Proposition
\ref{local fundamental diagram}, $Bl_{\Omega} Z$ has an $A_{1}$ singularity along 
$\overline{\Sigma}$ and this suffices to imply the existence of the desired finite $2:1$ morphism 
$\widehat{\varepsilon}_{2}: Bl_{\overline{\Delta}}Bl_{\Gamma}W \rightarrow Bl_{\overline{\Sigma}}Bl_{\Omega} Z$
whose branch locus is the exceptional divisor $\widehat{\Sigma}$ of $Bl_{\overline{\Sigma}}Bl_{\Omega} Z$.

(2) Smoothness of $Bl_{\widetilde{\Gamma}}Bl_{\Delta}W$ and  
$Bl_{\overline{\Delta}}Bl_{\Gamma}W$ follow from Remark \ref{singtilde} and Remark \ref{lisci su} respectively.

It remains  to show that the natural birational map $j:Bl_{\widetilde{\Gamma}}Bl_{\Delta}W \dashrightarrow Bl_{\overline{\Delta}}Bl_{\Gamma}W$
extends to a biregular morphism. Using the identification $Bl_{\widetilde{\Omega}}Bl_{\Sigma} Z = Bl_{\overline{\Sigma}}Bl_{\Omega}$,
$\widehat{\varepsilon}_{1}$ and   $\widehat{\varepsilon}_{2}$ may be seen as finite covers of  $Bl_{\widetilde{\Omega}}Bl_{\Sigma} Z$ and 
we have an equality of rational maps
$\varepsilon_{2}\circ j= \varepsilon_{1}$. It follows that the closure of the graph of $j$ is contained in the fiber product
$Bl_{\widetilde{\Gamma}}Bl_{\Delta}W \times_{Bl_{\widetilde{\Omega}}Bl_{\Sigma} Z} Bl_{\overline{\Delta}}Bl_{\Gamma}W$.
As $\widehat{\varepsilon}_{1}$ and   $\widehat{\varepsilon}_{2}$ are finite, the closure of the graph of $j$ is finite and
generically injective on the smooth factors $Bl_{\widetilde{\Gamma}}Bl_{\Delta}W$ and $ Bl_{\overline{\Delta}}Bl_{\Gamma}W$.
By Zariski's Main Theorem it is the graph of an isomorphism extending $j$. 

The commutativity of the diagram holds because all maps are regular and commutativity is trivial on open dense subsets.

\end{proof}

In the final remark of this section we discuss the behavior of the restriction of the morphisms appearing in the  diagram in item (2) of Corollary \ref{diagram cover}, to the divisors appearing over $\Omega$. Since  this remark will not be used in the rest of the paper, some of the computations are left to the reader.

\begin{rem}
Let $\widehat{\Gamma}$ be the exceptional divisor of the blow up of $Bl_{\Delta}W$ along its singular locus $\widetilde{\Gamma}$ (see Remark \ref{singtilde}).
By restricting the morphisms in the upper part of   the diagram in  Corollary \ref{diagram cover}, we get the diagram
\[
\xymatrix{ \widetilde{\Gamma} \ar[d]_{a_{3}} & \widehat{\Gamma} \ar[l]_{a_{1}} \ar[d]_{a_{4}} \ar[r]^{a_{2}} & I \ar[d]_{a_{5}}\\  
\widetilde{\Omega} & \widehat{\Omega} \ar[l]_{a_{6}} \ar[r]^{a_{7}} & \overline{\Omega}.\\
}
\]
As $\widetilde{\Omega}$ is contained in the branch locus of $\widetilde{\epsilon}$, the morphism $a_{4}$ is an isomorphism and 
$\widetilde{\Gamma}$ and $\widetilde{\Omega}$ are isomorphic to $G$. By item (1) of Corollary \ref{lift}, the exceptional divisor  $\widehat{\Gamma}$ has a natural identification with
$\P( \mc U )\times_G \P (\mc U)$ and $a_{1}$ is the natural fibration over $G$. Analogously, by Remark \ref{R1}, 
the divisor  $\widehat{\Omega}$ is identified with $\P ( \Sym^2 \mc U)=\P(\widetilde{Z})$ and $a_{6}$ is the fibration over $G$.  

The restriction $a_{4}: \P (\mc U) \times_G \P (\mc U) \rightarrow\P ( \Sym^2 \mc U)$ of $\widehat{\varepsilon}_{1,v}$
is the natural $2:1$ morphism.

The birational morphism $a_{2}: \P (\mc U) \times_G \P (\mc U) \rightarrow I \subset \P(V)\times \P(V)$ is induced by composing with the natural morphism $\P (\mc U)\rightarrow \P(V)$.

The birational morphism  $a_{7}: \P(\widetilde{Z})\rightarrow Z$ is induced by $f$
and finally the finite $2:1$ morphism $a_{5}: I\rightarrow  \overline{\Omega}=\P(Z)$  is the map $\P(\varepsilon)$ obtained from $\varepsilon$
by projectivization (see the proof of Corollary \ref{overlineepsilon}).

\end{rem}

\section{The global covering} \label{global}

In this section we globalize the local double coverings of Lemma \ref{diagram cover}.\footnote{The varieties that we construct in this section depend on the abelian surface $A$ and the chosen $v$--generic polarization but, as in the previous sections, we omit this dependence to avoid cumbersome notation.}
Our starting point is the following result contained in \cite{rap_phd} and \cite{perrapimrn}.
Keeping the notation as above,  let $ \widetilde{\Sigma}_{v} \subset \Kvt$ be the exceptional
divisor of the blow up $\Kvt=Bl_\Sigma \Kv \to \Kv$.

\begin{thm} \cite[Theorem 3.3.1]{rap_phd}
The class of $\widetilde{\Sigma}_{v}$ in the Picard group $Pic(\Kvt)$ of  $\Kvt$ is divisible by two.
\end{thm}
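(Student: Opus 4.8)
The plan is to translate the divisibility statement into the existence of a double cover of $\Kvt$ branched exactly along $\widetilde{\Sigma}_{v}$, and to construct such a cover by globalising the local double covers produced in Section~\ref{local}. The reduction is the following. Since $\Kvt$ is an irreducible holomorphic symplectic manifold it is simply connected, so $H^{1}(\Kvt,\mathcal{O})=0$ (whence $\Pic(\Kvt)=\NS(\Kvt)\hookrightarrow H^{2}(\Kvt,\Z)$) and $H^{2}(\Kvt,\Z)$ is torsion free; as $\widetilde{\Sigma}_{v}$ is effective, $\tfrac12[\widetilde{\Sigma}_{v}]$, if integral, is automatically of type $(1,1)$ and hence algebraic. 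Therefore $\mathcal{O}_{\Kvt}(\widetilde{\Sigma}_{v})$ is $2$--divisible in $\Pic(\Kvt)$ if and only if $[\widetilde{\Sigma}_{v}]\in 2H^{2}(\Kvt,\Z)$, and by the long exact sequence of $0\to\Z\xrightarrow{2}\Z\to\Z/2\to0$ together with $H^{1}(\Kvt,\Z/2)=0$ this is equivalent to the vanishing of the mod $2$ reduction of $[\widetilde{\Sigma}_{v}]$ in $H^{2}(\Kvt,\Z/2)$, i.e., by the cyclic cover construction, to the existence of a double cover $Y\to\Kvt$ with branch divisor exactly $\widetilde{\Sigma}_{v}$ (such a cover being automatically connected, as it is ramified).

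To build this cover, note that $f_{v}$ is an isomorphism away from $\widetilde{\Sigma}_{v}$, so it is enough to exhibit a connected \'etale double cover of $\Kv\setminus\Sigma_{v}$ with nontrivial local monodromy around $\widetilde{\Sigma}_{v}$ and to check that its normalised extension to $\Kvt$ is genuinely ramified over all of $\widetilde{\Sigma}_{v}$ (a local statement). The candidate is dictated by the transverse geometry: along $\Sigma_{v}\setminus\Omega_{v}$ the singularity is a transverse $\mathbb{C}^{2}/\pm 1$, and the family of these slices over $\Sigma_{v}\setminus\Omega_{v}=(A\times A^{\vee}\setminus(A\times A^{\vee})[2])/\pm 1$ is twisted precisely by the $\Z/2$--local system of the \'etale double cover $A\times A^{\vee}\to(A\times A^{\vee})/\pm 1=\Sigma_{v}$, the one ordering the two summands of a polystable sheaf $F_{1}\oplus F_{2}$; hence a punctured tubular neighbourhood of $\Sigma_{v}\setminus\Omega_{v}$ in $\Kv\setminus\Sigma_{v}$ carries a canonical connected \'etale double cover with local monodromy $-1$. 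Near the $256$ points of $\Omega_{v}$ one uses instead the Lehn--Sorger model: by Theorem~\ref{lehn sorger} $\Kvt$ is analytically isomorphic to $\Zt$, which by Corollary~\ref{lift} carries the finite double cover $\widetilde{\varepsilon}\colon Bl_{\Delta}W\to\Zt$ with branch locus $\widetilde{\Sigma}$; Lemma~\ref{W e Z} and Corollary~\ref{diagram cover} show that over the generic part of the exceptional divisor this restricts to exactly the $A_{1}$--type cover above, so the two local descriptions are compatible. Granting that these patch to a global \'etale cover of $\Kv\setminus\Sigma_{v}$, ramification over $\widetilde{\Sigma}_{v}$ is read off from the two explicit models.

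The main obstacle is exactly this patching: proving that the $\Z/2$--cover forced near $\widetilde{\Sigma}_{v}$ extends over all of $\Kv\setminus\Sigma_{v}$, equivalently that $H^{1}(\Kv\setminus\Sigma_{v},\Z/2)\ne0$ and contains the prescribed class. Because $\Kvt$ is simply connected and $\widetilde{\Sigma}_{v}$ is irreducible, $\pi_{1}(\Kv\setminus\Sigma_{v})=\pi_{1}(\Kvt\setminus\widetilde{\Sigma}_{v})$ is normally generated by the meridian $\mu$ of $\widetilde{\Sigma}_{v}$, so $H_{1}(\Kv\setminus\Sigma_{v},\Z)$ is cyclic on $[\mu]$ and the cover exists iff $[\mu]\ne 0$ mod $2$; one establishes this by a van Kampen / Mayer--Vietoris analysis of $\Kv=(\Kv\setminus\Sigma_{v})\cup(\text{tube around }\Sigma_{v})$, using $\pi_{1}(\Kv)=1$ and the homotopy type of the boundary of the tube (a twisted $\mathbb{RP}^{3}$--bundle over $\Sigma_{v}\setminus\Omega_{v}$, glued along the links of the $256$ points of $\Omega_{v}$, which are the links of the cone $Z$ and are governed by $W\to Z$ of Lemma~\ref{W e Z}). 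Alternatively, and probably more robustly, one can bypass the cover: compute $H^{2}(\Kvt,\Z)$ with its Beauville--Bogomolov form directly — relating it, via the blow--up description of Proposition~\ref{fundamental diagram}, to $H^{2}$ of the ambient moduli space and of the exceptional loci — and simply observe that the resulting explicit class $[\widetilde{\Sigma}_{v}]$ is even; by deformation invariance it then suffices to carry this out for one convenient pair $(A,v)$, e.g. O'Grady's original $v=(2,0,-2)$.
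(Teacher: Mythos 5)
Your opening reduction is fine: since $\Kvt$ is simply connected with torsion--free $H^{2}(\Kvt,\Z)$, divisibility of $[\widetilde{\Sigma}_{v}]$ by $2$ in $\Pic(\Kvt)$ is equivalent to the existence of a double cover of $\Kvt$ branched exactly along $\widetilde{\Sigma}_{v}$, i.e.\ of a connected \'etale double cover of $\Kvt\setminus\widetilde{\Sigma}_{v}$. But that same reduction shows that the step you label ``the main obstacle'' is not a technical point to be handled later: it \emph{is} the theorem. From the exact sequence of the pair (or the Gysin sequence), using that $\Kvt$ is simply connected and $\widetilde{\Sigma}_{v}$ is irreducible, one gets $H_{1}(\Kvt\setminus\widetilde{\Sigma}_{v},\Z)\cong\Z/d$ where $d$ is precisely the divisibility of $[\widetilde{\Sigma}_{v}]$ in $H^{2}(\Kvt,\Z)$; so ``the meridian survives mod $2$'' is a restatement of the claim, not a lemma you can extract from local data. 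The local models (the transverse $A_{1}$ cover along $\Sigma_{v}\setminus\Omega_{v}$ twisted by the ordering of the two summands, and the Lehn--Sorger cover $Bl_{\Delta}W\to\widetilde{Z}$ near the $256$ points) impose no constraint whatsoever: double covers with the prescribed branch always exist locally, there is no local-to-global principle, and your van Kampen/Mayer--Vietoris sketch only recovers that $H_{1}$ of the complement is cyclic on the meridian --- which already follows from simple connectedness of $\Kvt$ --- not that its order is even. Note that the paper's architecture is the reverse of yours: the divisibility is quoted and then \emph{used} to produce the global cover $\widetilde{Y}_{v}$, exactly because that cover cannot be conjured by gluing the local ones.

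Your fallback --- compute $H^{2}(\Kvt,\Z)$ with its Beauville--Bogomolov form for one convenient Mukai vector and conclude by deformation invariance --- is in substance the route of the sources the paper cites (the lattice computation for $v=(2,0,-2)$ is \cite[Theorem 3.3.1]{rap_phd}, and the general case is Theorem 3.1 and Remark 3.4 of \cite{perrapimrn}), but as written it carries no content: you neither perform the computation exhibiting a square root of $[\widetilde{\Sigma}_{(2,0,-2)}]$ nor justify the transfer to general $v$. The latter is itself delicate: a general deformation of $\Kvt$ does not carry the divisor $\widetilde{\Sigma}_{v}$ at all, so one needs the locally trivial deformations and Fourier--Mukai identifications of Perego--Rapagnetta to know that parallel transport takes $[\widetilde{\Sigma}_{v}]$ to $[\widetilde{\Sigma}_{(2,0,-2)}]$ before any lattice-theoretic divisibility can be transported. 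As it stands, the proposal does not prove the statement.
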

\begin{proof}
The case of $\widetilde{K}_{(2,0,-2)}$ is dealt in  \cite[Theorem 3.3.1]{rap_phd}.
The general case follows from Theorem 3.1 and Remark 3.4 of \cite{perrapimrn}.
\end{proof}

As the Picard group of the  IHS manifold $\Kvt$ is torsion free,
there exists a unique normal projective variety $\widetilde{Y}_{v}$ equipped with a double cover   $\widetilde{\epsilon}_{v}:\widetilde{Y}_{v}\rightarrow \Kvt$
 branched over  $\widetilde{\Sigma}_{v}$.  This double cover allows us to construct the global analogue of the morphism
$\varepsilon$ of Lemma \ref{diagram cover}.

\begin{thm}\label{global2:1}
There exists a unique normal projective variety $Y_{v}$ equipped with a finite $2:1$ morphism 
$\varepsilon_{v}:Y_{v}\rightarrow \Kv$ whose branch locus is $\Sigma_{v}$. The ramified double cover induced by   
$\varepsilon_{v}$ on a small analytic neighborhood of a point of $\Omega_{v}$ is isomorphic to the ramified double cover induced by 
$\varepsilon: W \to Z$ on  a small analytic neighborhood of a point of $\Omega$ in $Z$.
\end{thm}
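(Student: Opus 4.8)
The plan is to construct $Y_v$ by gluing together local models and then characterize it via a universal property so that uniqueness follows formally. Recall that $\Kv$ has $A_1$ singularities along $\Sigma_v \setminus \Omega_v$ and, near each of the $256$ points of $\Omega_v$, is analytically isomorphic to $(Z,0)$ by Theorem \ref{lehn sorger}(a). The key point is that a double cover branched over a divisor is determined, on a normal variety, by a line bundle $L$ together with a section of $L^{\otimes 2}$ cutting out the branch divisor (equivalently a reflexive sheaf of rank one whose square contains the ideal of the branch locus). So first I would produce the right line bundle on $\Kv$: namely, on $\Kvt = Bl_{\Sigma_v}\Kv$ we have $\widetilde{\Sigma}_v = 2L'$ for a line bundle $L'$ by \cite[Theorem 3.3.1]{rap_phd}; pushing forward along $f_v$ (or, more precisely, taking the reflexive hull of $(f_v)_* \mathcal{O}_{\Kvt}(-L')$ or working with the divisor class $\frac{1}{2}\Sigma_v$ as a Weil divisor on the normal variety $\Kv$) yields a rank-one reflexive sheaf $\mathcal{L}$ on $\Kv$ with $\mathcal{L}^{[\otimes 2]} \cong \mathcal{O}_{\Kv}(-\Sigma_v)$ away from possible trouble at $\Omega_v$. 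The section of $\mathcal{O}_{\Kv}(\Sigma_v)$ defining $\Sigma_v$ then gives the algebra $\mathcal{A} = \mathcal{O}_{\Kv}\oplus \mathcal{L}$, and I set $Y_v := \mathbf{Spec}_{\Kv}\,\mathcal{A}$, with $\varepsilon_v$ the structure morphism; it is finite of degree $2$ and branched over $\Sigma_v$ by construction.

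Second, I would check normality of $Y_v$ and verify the local statement simultaneously, since these are really the same computation. Away from $\Omega_v$, $\Kv$ has only transverse $A_1$ singularities along $\Sigma_v\setminus\Omega_v$ and the double cover branched over the branch divisor of an $A_1$-singularity is smooth (this is the classical picture, also visible from Lemma \ref{W e Z}: near a rank-$2$ point $\varepsilon$ is étale, and near a generic rank-$1$ point one has the local model $\{x_1^2+x_2^2+x_3^2=0\}$ in $\mathbb{A}^7$ whose branched double cover is smooth), so $Y_v$ is smooth in codimension $\le$ something and certainly normal there. At the points of $\Omega_v$ I invoke the analytic isomorphism $(\Kv,p)\cong (Z,0)$ of Theorem \ref{lehn sorger}(a): I need that, under this isomorphism, the reflexive sheaf $\mathcal{L}$ and the branch section match the ones defining $\varepsilon: W\to Z$ of Lemma \ref{W e Z}. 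This is where the real content sits. The point is that a square root of $\mathcal{O}_{\Kv}(-\Sigma_v)$ as a reflexive sheaf on an analytic neighborhood of $p$ is unique up to isomorphism if it exists, because $\Pic$ of a small neighborhood of $p$ (equivalently of $0\in Z$, a contractible cone) is trivial and because $H^1$ with $\mathbb{Z}/2$-coefficients of the smooth locus of a small neighborhood of $0\in Z$ is $\mathbb{Z}/2$ (this is exactly $\pi_1(Z\setminus\Sigma) = \mathbb{Z}/2$, see Remark \ref{ridimo}), so there is exactly one nontrivial double cover of $Z\setminus\Sigma$, hence exactly one candidate for the reflexive square root, and $\varepsilon:W\to Z$ realizes it. Therefore the analytic germ of $\varepsilon_v$ at $p$ must agree with that of $\varepsilon$ at $0$.

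Third, uniqueness of $Y_v$ globally: if $(Y'_v,\varepsilon'_v)$ is another normal projective variety with a finite degree-$2$ morphism branched exactly over $\Sigma_v$, then $(\varepsilon'_v)_*\mathcal{O}_{Y'_v} = \mathcal{O}_{\Kv}\oplus \mathcal{L}'$ for a rank-one reflexive $\mathcal{L}'$ with $(\mathcal{L}')^{[\otimes 2]}\cong \mathcal{O}_{\Kv}(-\Sigma_v)$ (using normality of $Y'_v$ and the trace/splitting of the degree-$2$ algebra, plus reflexivity to control behavior in codimension $\ge 2$). The difference $\mathcal{L}'\otimes\mathcal{L}^{\vee}$ (reflexive hull) is then a $2$-torsion element of the group of rank-one reflexive sheaves on $\Kv$, i.e. a $2$-torsion class in the divisor class group; to conclude $Y'_v\cong Y_v$ over $\Kv$ I need this group to have no $2$-torsion, or at least that any such torsion class giving an unramified-in-codimension-one cover is trivial. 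Here I would use that $\Kvt$ is simply connected (it is IHS), and that $\Kv = \Kvt/\!\sim$ is obtained by blowing down, so $\pi_1(\Kv_{\mathrm{reg}})$ is controlled; combined with the local analysis at $\Omega_v$ (the only codimension-$\ge 2$ strata contributing monodromy) this pins down the cover uniquely.

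The main obstacle I expect is the verification at the $256$ points of $\Omega_v$ that the globally-constructed reflexive square root $\mathcal{L}$ actually restricts, analytically near each $p$, to the reflexive sheaf giving $\varepsilon:W\to Z$ — in other words that the global double cover does not ``twist'' by the nontrivial local $\mathbb{Z}/2$ in a way inconsistent with the description in Lemma \ref{W e Z}. The resolution is the observation above that there is \emph{only one} nontrivial connected double cover of the germ $(Z\setminus\Sigma,0)$ since $\pi_1 = \mathbb{Z}/2$, so once we know $\varepsilon_v$ is ramified over $\Sigma_v$ and nontrivial near $p$ (which holds because its branch locus is nonempty there), its germ is forced to be that of $\varepsilon$. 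Packaging this cleanly — matching the algebra structures, not just the covers of the punctured germs — while keeping track of the reflexive (as opposed to locally free) nature of $\mathcal{L}$ at $\Omega_v$ is the delicate bookkeeping, but it is bookkeeping rather than a genuinely new idea, all the geometric input being Theorem \ref{lehn sorger}, Lemma \ref{W e Z}, Remark \ref{ridimo}, and \cite[Theorem 3.3.1]{rap_phd}.
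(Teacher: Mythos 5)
Your overall architecture (use the $2$--divisibility of $\widetilde{\Sigma}_{v}$ to produce the cover, identify the germ at $\Omega_{v}$ via $\pi_1(Z\setminus\Sigma)=\Z/2$, prove uniqueness by a torsion argument) is the right one, but the construction step rests on a false premise. The branch locus $\Sigma_{v}\simeq (A\times A^{\vee})/\pm 1$ has codimension \emph{two} in the sixfold $\Kv$, so it is not a branch divisor: there is no datum ``line bundle $L$ plus section of $L^{\otimes 2}$ cutting out $\Sigma_{v}$'', and the identity $\mathcal{L}^{[\otimes 2]}\cong\mathcal{O}_{\Kv}(-\Sigma_{v})$ cannot hold for a rank-one reflexive sheaf, since the reflexive hull of the ideal sheaf of a codimension-two subvariety is $\mathcal{O}_{\Kv}$ itself. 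The cover you want is \'etale in codimension one, so the correct datum is a \emph{nontrivial $2$-torsion} rank-one reflexive sheaf (equivalently, the nontrivial \'etale double cover of $\Kv\setminus\Sigma_{v}$, followed by normalization of $\Kv$ inside it). With the divisorial picture gone, your assertion that the relative $\mathrm{Spec}$ is ``branched over $\Sigma_{v}$ by construction'' is precisely what must be proved: you need the $\Z/2$-monodromy around a transverse loop of the $A_1$ locus $\Sigma_{v}\setminus\Omega_{v}$ to be nontrivial, and the cover to be non-split near each point of $\Omega_{v}$; your argument at $\Omega_{v}$ presupposes exactly this (``because its branch locus is nonempty there''), which is circular. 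The paper avoids the issue by working upstairs, where the branch locus \emph{is} a divisor: the cover $\widetilde{Y}_{v}\to\Kvt$ branched along $\widetilde{\Sigma}_{v}=2L'$ is manifestly ramified there, and the cover of $\Kv\setminus\Omega_{v}$ is obtained by contracting, via Nakano's theorem, the preimage of $\widetilde{\Sigma}_{v}\setminus\widetilde{\Omega}_{v}$ (a $\P^{1}$-bundle whose normal bundle has degree $-1$ on fibers); this is then glued to the local models coming from $\varepsilon\colon W\to Z$ of Lemma \ref{W e Z} near $\Omega_{v}$, using Remark \ref{ridimo} and Theorem \ref{lehn sorger}(a), with projectivity from GAGA. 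If you want to keep your direct construction on $\Kv$, you must add the computation that the restriction of $L'$ to a punctured transverse slice of $\Sigma_{v}\setminus\Omega_{v}$ has monodromy $-1$ (a transverse disk meets $\widetilde{\Sigma}_{v}=2L'$ once), which is the missing input forcing ramification along all of $\Sigma_{v}$.

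The uniqueness step also contains an error: you ask that the class group of $\Kv$ have no $2$-torsion, ``or at least that any such torsion class giving an unramified-in-codimension-one cover is trivial''. This is incompatible with the existence of $Y_{v}$ itself, which corresponds to a \emph{nontrivial} $2$-torsion class whose cover is \'etale in codimension one. What is needed, and what the paper proves, is that there is exactly \emph{one} nontrivial $2$-torsion element of $\Pic(\Kvt\setminus\widetilde{\Sigma}_{v})$ -- this follows from torsion-freeness of $\Pic(\Kvt)$ together with the irreducibility and $2$-divisibility of $\widetilde{\Sigma}_{v}$ -- combined with the fact that regular functions on $\Kv\setminus\Sigma_{v}$ are constant (normality plus $\codim\Sigma_{v}=2$), so that the trivialization of the square is unique up to scalars; a graph-closure/Zariski main theorem argument then upgrades the isomorphism of \'etale covers off $\Sigma_{v}$ to an isomorphism of normal double covers over $\Kv$. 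Simple connectedness of $\Kvt$ alone does not control $\pi_1(\Kv\setminus\Sigma_{v})$ in the direction you want: removing the divisor $\widetilde{\Sigma}_{v}$ creates exactly the $\Z/2$ that makes the cover exist, so the point is uniqueness of the nontrivial class, not its absence.
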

\begin{proof}
For any $p\in \Omega_{v}$ there exists a small analytic neighborhood $U_{p,v}$ of $p\in\Kv$ that is biholomorphic to the intersection of 
$Z$ with an open ball. Hence, for any $p\in\Omega_{v}$ there exists a proper complex analytic space $Y_{p,v}$ and a finite $2:1$ morphism  
$\varepsilon_{p,v}: Y_{p,v}\rightarrow U_{p,v}$ branched along $U_{p,v}\cap \Sigma_{v}$, which is obtained by restricting $\varepsilon$.

On the other hand, there exists an analytic manifold $Y^{o}_{v}$ and a finite $2:1$ morphism 
$\varepsilon^{o}_{v}:Y^{o}_{v} \rightarrow \Kv\setminus\Omega_{v}$
 branched along $\Sigma_{v}\setminus\Omega_{v}$. To see this, first of all notice that by restricting 
$\widetilde{\epsilon}_{v}:\widetilde{Y}_{v}\rightarrow \Kvt$ we get a double covering of $\Kvt \setminus \widetilde{\Omega}_{v}$
branched along 
$\widetilde{\Sigma}_{v}\setminus \widetilde{\Omega}_{v}$. 
Since $\Kv$ has an $A_1$ singularity along $\Sigma_{v}$,
the exceptional divisor $\widetilde{\Sigma}_{v}\setminus \widetilde{\Omega}$ is a $\mathbb{P}^{1}$--bundle whose normal bundle has degree $-2$
on the fibers. It follows that $\widetilde{\varepsilon}_{v}^{-1}(\widetilde{\Sigma}_{v}\setminus \widetilde{\Omega})$
is a $\mathbb{P}^{1}$--bundle whose normal bundle has degree $-1$
on the fibers. By Nakano's Theorem (see \cite{nak} and \cite{fujnak} ),  $\widetilde{\epsilon}_{v}^{-1}(\Kvt \setminus\widetilde{\Omega})$
is the blow up of a complex manifold $Y^{o}_{v}$ along a submanifold isomorphic to $\Sigma_{v}\setminus\Omega_{v}$.
Moreover, since $f_{v}\circ \widetilde{\epsilon}_{v}: \widetilde{Y}_{v}\rightarrow \Kv$ is constant on the fibers 
of the $\mathbb{P}^{1}$ bundle $\widetilde{\epsilon}_{v}^{-1}(\widetilde{\Sigma}_{v}\setminus \widetilde{\Omega})$, it induces  the desired  finite $2:1$ morphism
$\varepsilon^{o}_{v}:Y^{o}_{v}\rightarrow \Kv\setminus\Omega_{v}$.

To yield the existence of $\varepsilon_{v}:Y_{v}\rightarrow \Kv$, it will suffice to prove that 
$\varepsilon_{p,v}$ and $\varepsilon^{o}_{v}$ induce isomorphic double covers on
$U_{p}\setminus \{p\}$ so that they can be glued to get $\varepsilon_{v}$.
Recall from Remark  \ref{ridimo} that the fundamental group of $Z\setminus\Sigma$ is $\mathbb{Z}/2\mathbb{Z}$ . Since the same holds for $U_{p,v}\setminus \Sigma$, the \'etale double covers induced by $\varepsilon_{p,v}$ and $\varepsilon^{o}_{v}$ on 
$U_{p,v}\setminus \Sigma$ are isomorphic. The closure of the graph of this isomorphism in the fiber product 
$\varepsilon^{o-1}_{v}(U_{p,v}\setminus \{p\})\times_{U_{p,v}\setminus \{p\}} \varepsilon_{p,v}^{-1}(U_{p}\setminus \{p\})$
is finite and bimeromorphic on the manifolds $\varepsilon^{o-1}_{v}(U_{p,v}\setminus \{p\})$ and $\varepsilon_{p,v}^{-1}(U_{p}\setminus \{p\})$,
hence it is the graph of an isomorphism of double covers $\Kv$.

The glued complex analytic space   $Y_{v}$ is also projective as a consequence of GAGA's principles \cite[Cor 4.6]{SGA1}, since it has a finite proper map to a projective variety. Finally $Y_{v}$ is normal since $W$ is normal and since the normality of a complex variety may be checked on the associated complex analytic space
(\cite[Prop. 2.1]{SGA1}).

To prove uniqueness of $\varepsilon_{v}$, let $\varepsilon_{v}':Y_{v}'\rightarrow K_{v}$ be a finite $2:1$  morphism branched  over $\Sigma_{v}$ such that
$Y_{v}'$ is normal.  
In this case $Y_{v}\setminus \varepsilon_{v}^{-1}(\Sigma_{v})$ and $Y_{v}'\setminus \varepsilon_{v}'^{-1}(\Sigma_{v})$ are algebraic proper \'etale double covers 
of $\Kv\setminus \Sigma_{v}= \Kvt\setminus \widetilde{\Sigma}_{v}$. Any such cover is determined by a $2$ torsion point in the Picard group 
$Pic(\Kvt\setminus \widetilde{\Sigma}_{v})$ and a nowhere vanishing section (unique up to scalars) of the trivial line bundle. As the  $\widetilde{\Sigma}_{v}$ is irreducible  and its class is divisible by $2$ in the free
group $Pic(\Kvt)$, there exists a unique non trivial $2$ torsion point in $Pic(\Kvt\setminus \widetilde{\Sigma}_{v})$. Moreover, as  $\Kv$ is normal and $\Sigma_{v}$
has codimension $2$ in $\Kv$, a  regular function on $\Kv\setminus \Sigma_{v}$ extends to the projective variety $\Kv$ and therefore it is constant.
It follows that      
$Y_{v}\setminus \varepsilon_{v}^{-1}(\Sigma_{v})$ and $Y_{v}'\setminus \varepsilon_{v}'^{-1}(\Sigma_{v})$ are isomorphic \'etale double covers of 
 $\Kv\setminus \Sigma_{v}$. 

Repeating the  argument in the final part of the proof of the existence, the closure of the graph of this isomorphism in the
fiber product $Y_{v}\times_{\Kv}Y_{v}'$ is finite and birational over the normal varieties $Y_{v}$ and $Y_{v}'$, 
hence it is the graph of an isomorphism of double covers.
The local characterization of $\varepsilon$ near points of $\Omega_{v}$ holds by construction.
\end{proof}

Theorem \ref{global2:1} allows to prove a straightforward global version of Lemma \ref{diagram cover}. 
Let $\Delta_{v}\subset Y_{v}$ be the ramification locus (with the reduced induced structure) of $\varepsilon_{v}$
and let $\Gamma_{v}$ be the singular locus (consisting of 256 points) of $Y_{v}$.
Denote by $\widetilde{\Gamma}_{v}$  the inverse image with reduced structure of $\Gamma_{v}$ in  
$Bl_{\Delta_{v}}Y_{v}$ and denote by $\overline{\Delta}_{v}$ the strict transform of $\Delta_{v}$ in $Bl_{\Gamma_{v}}Y_{v}$.  
\begin{cor} \label{global diagram cover}
\begin{enumerate}
\item{}The projective varieties  $Bl_{\widetilde{\Gamma}_{v}}Bl_{\Delta_{v}}Y_{v}$ and  $Bl_{\overline{\Delta}_{v}}Bl_{\Gamma_{v}}Y_{v}$ are smooth and isomorphic over $Y_{v}$.

\item{}There exist finite $2:1$ morphisms $\widetilde{\varepsilon}_{v}:Bl_{\Delta_{v}}Y_{v}\rightarrow \Kvt$, 
$\overline{\varepsilon}_{v}:Bl_{\Gamma_{v}}Y_{v}\rightarrow Bl_{\Omega_{v}}K_{v}$,
$\widehat{\varepsilon}_{1,v}: Bl_{\widetilde{\Gamma}_{v}}Bl_{\Delta_{v}}Y_{v}\rightarrow Bl_{\widetilde{\Omega}_{v}}Bl_{\Sigma_{v}} \Kv$
and $\widehat{\varepsilon}_{2,v}: Bl_{\overline{\Delta}_{v}}Bl_{\Gamma_{v}}Y_{v} \rightarrow Bl_{\overline{\Sigma}_{v}}Bl_{\Omega_{v}} \Kv$, 
lifting $\varepsilon_{v}$. Hence, there exists  a commutative diagram 
\[
\xymatrix{
&   Bl_{\widetilde{\Gamma}_{v}}Bl_{\Delta_{v}}Y_{v}=Bl_{\overline{\Delta}_{v}}Bl_{\Gamma_{v}}Y_{v}
\ar[dr] \ar[d]_{\widehat{\varepsilon}_{1,v}=\widehat{\varepsilon}_{2,v}}  \ar[dl] & \\
Bl_{\Delta_{v}}Y_{v}\ar[d]_{\widetilde{\varepsilon}_{v}} \ar[dr] &  
Bl_{\widetilde{\Omega}_{v}}Bl_{\Sigma_{v}} \Kv = Bl_{\overline{\Sigma}_{v}}Bl_{\Omega_{v}}\Kv \ar[dl] \ar[dr] \Kv & Bl_{\Gamma_{v}}Y_{v} 
\ar[d]_{\overline{\varepsilon}_{v}}
\ar[dl] 
\\
\Kvt =Bl_{\Sigma_{v}}\Kv \ar[dr]& Y_{v}\ar[d]_{\varepsilon_{v}} & Bl_{\Omega_{v}} \Kv\ar[dl]\\
& \Kv & \\
}
\]
where the diagonal arrows are blow ups.
\end{enumerate}
\end{cor}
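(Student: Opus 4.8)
The plan is to deduce Corollary \ref{global diagram cover} from the local analysis of Section \ref{local} (in particular Corollary \ref{diagram cover}) together with the global existence result Theorem \ref{global2:1} and the global blow-up picture of Proposition \ref{fundamental diagram} and Corollary \ref{corfd}, using everywhere the principle that blow-ups along (appropriately stratified) centers and the formation of double covers branched along Cartier divisors are local constructions in the analytic topology.

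First I would establish item (2) on the existence and compatibility of the lifted $2{:}1$ morphisms. By Theorem \ref{global2:1}, $\varepsilon_v:Y_v\to\Kv$ is, near each point of $\Omega_v$, analytically isomorphic to $\varepsilon:W\to Z$ near a point of $\Omega$, and over $\Kv\setminus\Omega_v$ it is (by construction in that proof) the $2{:}1$ cover $\varepsilon^o_v:Y^o_v\to\Kv\setminus\Omega_v$ with branch divisor $\Sigma_v\setminus\Omega_v$ along which $\Kv$ has an $A_1$ singularity. Each of the four blow-up towers on the left of the diagram is obtained by blowing up centers ($\Delta_v$, $\Gamma_v$, $\widetilde\Gamma_v$, $\overline\Delta_v$) that are, near $\Omega_v$, analytically the local centers ($\Delta$, $\Gamma$, $\widetilde\Gamma$, $\overline\Delta$) of Corollary \ref{diagram cover} and, away from $\Omega_v$, are either empty or smooth; hence the local construction of $\widetilde\varepsilon$, $\overline\varepsilon$, $\widehat\varepsilon_1$, $\widehat\varepsilon_2$ in Corollary \ref{diagram cover} glues. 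More precisely: the lift $\widetilde\varepsilon_v:Bl_{\Delta_v}Y_v\to\Kvt$ exists because blowing up $\Delta_v$ makes $\varepsilon_v^{-1}(\Sigma_v)$ a Cartier divisor (this is exactly the computation $I_{\Delta}^2=I_{\varepsilon^{-1}(\Sigma)}$ of Corollary \ref{lift}(1), which is analytic-local) so the universal property of the blow-up $\Kvt=Bl_{\Sigma_v}\Kv$ produces the map, and over the smooth locus it is the cover $\widetilde\varepsilon_v=\widetilde\varepsilon^o_v$ of Theorem \ref{global2:1}. The lift $\overline\varepsilon_v$ exists because over $\Kv\setminus\Omega_v$ the branch locus $\Sigma_v\setminus\Omega_v$ is an $A_1$-singular locus so the cover $\varepsilon^o_v$ extends over the $(-1)$-curve blow-up as in Corollary \ref{overlineepsilon}, and near $\Omega_v$ it is the local $\overline\varepsilon$. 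Finally $\widehat\varepsilon_{1,v}$ and $\widehat\varepsilon_{2,v}$ are obtained from $\widetilde\varepsilon_v$ and $\overline\varepsilon_v$ by the same two further blow-ups, using (away from $\Omega_v$) that their centers are smooth and (near $\Omega_v$) the local statements; the identification $\widehat\varepsilon_{1,v}=\widehat\varepsilon_{2,v}$ under $Bl_{\widetilde\Omega_v}Bl_{\Sigma_v}\Kv=Bl_{\overline\Sigma_v}Bl_{\Omega_v}\Kv$ of Proposition \ref{fundamental diagram}(2) follows from the corresponding local identity. Commutativity of the whole diagram holds because every arrow is a proper morphism of (irreducible reduced) varieties and all squares commute over the dense open set where everything is an isomorphism or étale, hence everywhere.

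Next I would prove item (1). Smoothness of $Bl_{\widetilde\Gamma_v}Bl_{\Delta_v}Y_v$ and of $Bl_{\overline\Delta_v}Bl_{\Gamma_v}Y_v$ is local: away from $\Gamma_v$ these are blow-ups of the smooth manifold $Y_v\setminus\Gamma_v$ along smooth centers (using that $\Delta_v\setminus\Gamma_v$ and its strict transforms are smooth, as in Remark \ref{lisci su} and Remark \ref{singtilde}), and near $\Gamma_v$ they are analytically the smooth varieties $Bl_{\widetilde\Gamma}Bl_\Delta W$ and $Bl_{\overline\Delta}Bl_\Gamma W$ of Corollary \ref{diagram cover}(2). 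For the isomorphism over $Y_v$ I would argue as in the proofs of Corollary \ref{diagram cover}(2) and Proposition \ref{fundamental diagram}(2): the two smooth projective varieties are canonically birational over $Y_v$ and, since they are already locally analytically isomorphic (either trivially away from $\Gamma_v$, or by Corollary \ref{diagram cover}(2) near $\Gamma_v$), the natural birational map extends to an isomorphism — concretely, realize both as finite covers of $Bl_{\widetilde\Omega_v}Bl_{\Sigma_v}\Kv$ via $\widehat\varepsilon_{1,v}$ and $\widehat\varepsilon_{2,v}$, observe $\widehat\varepsilon_{2,v}\circ j=\widehat\varepsilon_{1,v}$, so the closure of the graph of $j$ sits in the fiber product over $Bl_{\widetilde\Omega_v}Bl_{\Sigma_v}\Kv$, is finite and generically injective over each smooth factor, hence by Zariski's Main Theorem is the graph of an isomorphism.

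The main obstacle I expect is the gluing in item (2): one must check that the \emph{analytic-local} covers produced near the $256$ points of $\Omega_v$ (from $\varepsilon:W\to Z$) and the \emph{global} cover produced away from $\Omega_v$ (from $\widetilde\epsilon_v$ via Nakano contraction) agree on the overlaps. This is precisely the argument already carried out in the proof of Theorem \ref{global2:1}, where the key point is that $\pi_1(Z\setminus\Sigma)\cong\Z/2$ (Remark \ref{ridimo}) forces the two étale double covers on a punctured neighborhood $U_{p,v}\setminus\Sigma$ to coincide, after which normality plus Zariski's Main Theorem upgrades the generic identification to a biregular one and GAGA gives projectivity. So in fact for Corollary \ref{global diagram cover} most of this work is already done, and the remaining task is bookkeeping: one simply records that each of the further blow-ups in Corollary \ref{diagram cover} is compatible with these gluings because its center is analytically-locally one of $\Delta,\Gamma,\widetilde\Gamma,\overline\Delta$ near $\Omega_v$ and smooth (or empty) elsewhere, and that all the resulting morphisms fit into the stated commutative diagram by density of the locus where they are isomorphisms.
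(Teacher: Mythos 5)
Your proposal is correct and follows essentially the same route as the paper: both reduce everything to the local statements of Section \ref{local} (Corollaries \ref{overlineepsilon}, \ref{lift}, \ref{diagram cover}) together with the local model of $\varepsilon_v$ from Theorem \ref{global2:1}, and glue using that lifts of a morphism to birational modifications are unique, with commutativity checked on a dense open set. Your only variations — invoking the universal property of $Bl_{\Sigma_v}\Kv$ globally for $\widetilde{\varepsilon}_v$ and rerunning the Zariski Main Theorem graph-closure argument at the global level for item (1), where the paper simply glues the local isomorphisms — are just the paper's own local arguments transplanted to the global setting, so no substantive difference.
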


\begin{proof}
(1) Over the inverse images of the smooth locus of $Y_{v}$, the existence of the isomorphism is trivial, whereas over the inverse images of  small euclidean neighborhoods of the singular points of $Y_{v}$ it follows from  item (2) of Lemma \ref{diagram cover}. Since the global blow up is obtained by gluing local blow ups, 
$Bl_{\widetilde{\Gamma}_{v}}Bl_{\Delta_{v}}Y_{v}$ and $Bl_{\overline{\Delta}_{v}}Bl_{\Gamma_{v}}Y_{v}$ are isomorphic over $Y_{v}$. 
The smoothness of  $Bl_{\overline{\Delta}_{v}}Bl_{\Gamma_{v}}Y_{v}$ and 
$Bl_{\overline{\Delta}_{v}}Bl_{\Gamma_{v}}Y_{v}$ follows from the smoothness of 
$Bl_{\overline{\Delta}}Bl_{\Gamma}W$ and 
$Bl_{\overline{\Delta}}Bl_{\Gamma}W$, which was proven in Corollary \ref{diagram cover}. Hence 
item (1) holds.

(2) The existence of the liftings of the double cover $\varepsilon_{v}$ over the inverse images of $K_{v}\setminus \Omega_{v}$ 
 is clear. 
Over the inverse images of a small euclidean neighborhood  in $K_{v}$ of a point of $\Omega_{v}$, the existence of the lift follows from (a) of Theorem \ref{lehn sorger},  Corollary \ref{overlineepsilon}, Corollary \ref{lift}, and from item (1) of Lemma 
\ref{diagram cover}.
Since, the lift of a morphism to bimeromorphic varieties is unique, whenever it exists, it is possible to glue the local liftings and obtain the desired global morphism.
\end{proof}

\begin{rem}\label{rem:ultimo}
In Corollary \ref{global diagram cover} we have showed that $Bl_{\Delta_{v}}Y_{v}$ is a double cover of $\Kvt$ and  that it branched over 
$\widetilde{\Sigma}_{v}$. Moreover, by Corollary \ref{lift}, the projective variety  $Bl_{\Delta_{v}}Y_{v}$ is normal.
Since the Picard group of the  IHS manifold $\Kvt$ is torsion free,
there exists a unique  such a double cover. It follows that $Bl_{\Delta_{v}}Y_{v}=\widetilde{Y}_{v}$ and 
$\widetilde{\varepsilon}_v= \widetilde{\epsilon}_v$. 
\end{rem}
In order to describe  the ramification loci of these double coverings, we need to introduce some further notation.
In the following corollary we  denote by $\widetilde{\Delta}_{v}\subset Bl_{\Delta_{v}}Y_{v}$ the exceptional divisor and by   
$\widehat{\Delta}_{v}\subset Bl_{\widetilde{\Gamma}_{v}}Bl_{\Delta_{v}}Y_{v}=Bl_{\overline{\Delta}_{v}}Bl_{\Gamma_{v}}Y_{v}$
the strict transform of $\widetilde{\Delta}_{v}$ or, equivalently, the exceptional divisor of the blow up of $Bl_{\Gamma_{v}}Y_{v}$ along 
$\overline{\Delta}_{v}$.

\begin{cor} \label{cor:rambra}
\begin{enumerate}
\item{}The branch loci of $\widetilde{\varepsilon}_{v}$, $\overline{\varepsilon}_{v}$, and of $\widehat{\varepsilon}_{1,v}(=\widehat{\varepsilon}_{2,v})$
are $\widetilde{\Sigma}_{v}$, $\overline{\Sigma}_{v}$, and $\widehat{\Sigma}_{v}$, respectively.
\item{}The ramification loci of $\widetilde{\varepsilon}_{v}$, $\overline{\varepsilon}_{v}$ and $\widehat{\varepsilon}_{1,v}(=\widehat{\varepsilon}_{2,v})$
are $\widetilde{\Delta}_{v}$, $\overline{\Delta}_{v}$, and $\widehat{\Delta}_{v}$, respectively.
\end{enumerate}
\end{cor}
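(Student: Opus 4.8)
The plan is to reduce everything to the local picture established in Corollary \ref{diagram cover}, exactly as the proof of Corollary \ref{global diagram cover} did for the existence of the morphisms. Recall that all the varieties in the big diagram of Corollary \ref{global diagram cover} are obtained by gluing local models: over the preimage of $\Kv \setminus \Omega_v$ the double covers are the restrictions of $\widetilde{\epsilon}_v$ (which is the honest blow-up-of-an-$A_1$ situation, controlled by Nakano's theorem as in the proof of Theorem \ref{global2:1}), while over a small euclidean neighborhood of a point of $\Omega_v$ they are biholomorphic to the local models $W$, $Bl_\Gamma W$, $Bl_\Delta W$, $Bl_{\widetilde\Gamma}Bl_\Delta W$ and their maps $\varepsilon$, $\overline\varepsilon$, $\widetilde\varepsilon$, $\widehat\varepsilon_1=\widehat\varepsilon_2$ from Section \ref{local}. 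Since branch and ramification loci are defined locally (the branch locus is where the map fails to be étale, the ramification locus is its reduced preimage), it suffices to identify them on each piece of the cover and check that the local identifications glue; the gluing is automatic because it is already built into the construction of the varieties and morphisms in Corollary \ref{global diagram cover}.

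First I would handle $\widetilde{\varepsilon}_v : Bl_{\Delta_v} Y_v \to \Kvt$. Over $\Kvt \setminus \widetilde\Omega_v$ the branch locus is $\widetilde\Sigma_v \setminus \widetilde\Omega_v$ by the very construction (it is the ramified double cover branched over $\widetilde\Sigma_v$ of Remark \ref{rem:ultimo}, equivalently the one coming from divisibility of $[\widetilde\Sigma_v]$); over a neighborhood of a point of $\Omega_v$ the branch locus is $\widetilde\Sigma$ by item (2) of Corollary \ref{lift}. These two loci agree on the overlap, so the global branch locus is $\widetilde\Sigma_v$, and the ramification locus is then its reduced preimage $\widetilde\Delta_v$ by definition of the latter. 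The same two-step argument applies verbatim to $\overline\varepsilon_v : Bl_{\Gamma_v} Y_v \to Bl_{\Omega_v}\Kv$ using Corollary \ref{overlineepsilon} (branch locus $\overline\Sigma$ locally, hence $\overline\Sigma_v$ globally, ramification locus $\overline\Delta_v$), and to $\widehat\varepsilon_{1,v}=\widehat\varepsilon_{2,v}$ using item (1) of Corollary \ref{diagram cover} (branch locus the exceptional divisor $\widehat\Sigma$ of $Bl_{\overline\Sigma}Bl_\Omega Z$ locally, hence $\widehat\Sigma_v$ globally, and ramification locus $\widehat\Delta_v$, which is precisely the strict transform of $\widetilde\Delta_v$, i.e. the exceptional divisor of $Bl_{Bl_{\overline\Delta_v}Bl_{\Gamma_v}Y_v}$ along $\overline\Delta_v$, matching the notation fixed just before the statement).

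The only genuine point requiring care — the step I expect to be the mild obstacle — is making sure the two descriptions of each branch locus are \emph{compatible on the overlap}, i.e. that the local-model branch divisor $\widetilde\Sigma$ (resp.\ $\overline\Sigma$, $\widehat\Sigma$) restricted to the punctured neighborhood really coincides with the restriction of the global divisor $\widetilde\Sigma_v$ (resp.\ $\overline\Sigma_v$, $\widehat\Sigma_v$), and not merely some divisor with the same support. This follows because the biholomorphism $(Z,0)\stackrel{loc}{\cong}(\Kv,p)$ of Theorem \ref{lehn sorger} carries $\Sigma$ to $\Sigma_v$ (by definition of $\Sigma_v$ as the singular locus), and blow-ups commute with this local isomorphism, so it carries $\widetilde\Sigma$, $\overline\Sigma$, $\widehat\Sigma$ to $\widetilde\Sigma_v$, $\overline\Sigma_v$, $\widehat\Sigma_v$ on the nose; since these are reduced divisors, agreement of supports on a connected overlap containing a dense open set forces equality. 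With that observed, both items follow, and commutativity/uniqueness are not needed here. I would close by remarking that item (2) is then a formal consequence of item (1): for a finite degree-two morphism of normal varieties, the reduced preimage of the branch locus is the ramification locus, and for each of the three maps this reduced preimage has been named $\widetilde\Delta_v$, $\overline\Delta_v$, $\widehat\Delta_v$ respectively.
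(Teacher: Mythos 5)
Your treatment of item (1) is essentially the paper's own argument: both reduce to the local statements (Corollary \ref{lift}, Corollary \ref{overlineepsilon}, item (1) of Corollary \ref{diagram cover}) and glue, and your extra care about compatibility of the local and global branch divisors on overlaps is a reasonable elaboration of what the paper leaves implicit. So far, no objection.

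In item (2), however, there is a gap in the justification as written. You say the ramification locus of $\widetilde{\varepsilon}_{v}$ is ``its reduced preimage $\widetilde{\Delta}_{v}$ by definition of the latter'', and later that the reduced preimages of the branch loci ``have been named'' $\widetilde{\Delta}_{v}$, $\overline{\Delta}_{v}$, $\widehat{\Delta}_{v}$. That is not how these loci are defined: $\widetilde{\Delta}_{v}$ is defined as the exceptional divisor of $Bl_{\Delta_{v}}Y_{v}\to Y_{v}$, $\overline{\Delta}_{v}$ as the strict transform of $\Delta_{v}$ in $Bl_{\Gamma_{v}}Y_{v}$, and $\widehat{\Delta}_{v}$ as the strict transform of $\widetilde{\Delta}_{v}$ (equivalently the exceptional divisor over $\overline{\Delta}_{v}$). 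The whole content of item (2) is precisely that these geometrically defined subvarieties coincide with the reduced preimages of the branch loci, and this needs an argument, not a renaming. It is true, and the proof is short, but it uses the commutativity of the diagram in item (2) of Corollary \ref{global diagram cover} together with the fact that $\varepsilon_{v}$ ramifies exactly along $\Delta_{v}$ with branch locus $\Sigma_{v}$ (this is exactly what the paper invokes). For instance, for $\overline{\varepsilon}_{v}$: the set-theoretic preimage of $\Delta_{v}$ in $Bl_{\Gamma_{v}}Y_{v}$ is $\overline{\Delta}_{v}\cup\overline{\Gamma}_{v}$ (since $\Gamma_{v}\subset\Delta_{v}$), so one must still check that no component of the exceptional divisor $\overline{\Gamma}_{v}$ lies in $\overline{\varepsilon}_{v}^{-1}(\overline{\Sigma}_{v})$ — it maps into $\overline{\Omega}_{v}$, which meets $\overline{\Sigma}_{v}$ only in a proper closed subset — and that $\overline{\Delta}_{v}$ does map into $\overline{\Sigma}_{v}$; alternatively one verifies this in the local model, where the preimage of $\overline{\Sigma}$ under $\overline{\varepsilon}$ is exactly the strict transform $\overline{\Delta}=Bl_{0}\Delta$. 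Adding this one step (commutativity of the global diagram, or the corresponding local computations) closes the gap; as it stands, your ``formal consequence of item (1)'' conceals the actual assertion being proved.
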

\begin{proof} The statements on the branch loci are determined by the analogous statement proved for the local case. Specifically, (1) follows from Corollary \ref{lift}, Corollary \ref{overlineepsilon}
and item (1) of Corollary \ref{diagram cover}.

Since the ramification locus of $\varepsilon_{v}$ is $\Delta_{v}$ and its branch locus is $\Sigma_{v}$, (2) follows from (1) and from the commutativity of the diagram in item (2) of  
Corollary \ref{global diagram cover}.
\end{proof}
In the final part of this section on the global geometry of the double covers induced by $\varepsilon_{v}$, we compare their ramification and their branch loci
and discuss the associated involutions. 

\begin{rem} \label{rmk:esplsigma}
Since $\Sigma_{v}\simeq A\times A^{\vee}/\pm1$, $\widetilde{\Sigma}_{v}$ has an $A_{1}$ singularity along its singular locus (see Remark \ref{R2}). Moreover,  
$\overline{\Sigma}_{v}$ and $\widehat{\Sigma}_{v}$ are smooth (see Proposition \ref{fundamental diagram}) and
the branch loci of $\varepsilon_{v}$, $\widetilde{\varepsilon}_{v}$, $\overline{\varepsilon}_{v}$, and $\widehat{\varepsilon}_{1,v}(=\widehat{\varepsilon}_{2,v})$ are normal. Hence these double covers induce  isomorphisms $\Delta_{v}\simeq\Sigma_{v}$, $\widetilde{\Delta}_{v}\simeq\widetilde{\Sigma}_{v}$, 
$\overline{\Delta}_{v}\simeq\overline{\Sigma}_{v}$, and $\widehat{\Delta}_{v}\simeq \widehat{\Sigma}_{v}$.
\end{rem}

\begin{rem}\label{involuzioni}
Remark \ref{lisci su} implies that $Bl_{\Gamma_{v}}Y_{v}$ and 
$Bl_{\widetilde{\Gamma}_{v}}Bl_{\Delta_{v}}Y_{v}=Bl_{\overline{\Delta}_{v}}Bl_{\Gamma_{v}}Y_{v}$
are smooth, Corollary \ref{lift} implies that $Bl_{\Delta_{v}}Y_{v}$ is normal and, by Theorem \ref{global2:1}, $Y_{v}$ is normal too.
Hence the finite $2:1$ morphisms $\varepsilon_{v}$, $\widetilde{\varepsilon}_{v}$, $\overline{\varepsilon}_{v}$ and $\widehat{\varepsilon}_{1,v}(=\widehat{\varepsilon}_{2,v})$
induce regular involutions $\tau_{v}$, $\widetilde{\tau}_{v}$, $\overline{\tau}_{v}$, $\widehat{\tau}_{1,v}$ and $\widehat{\tau}_{2,v}$ on $Y_{v}$, $Bl_{\Delta_{v}}Y_{v}$,
$Bl_{\Gamma_{v}}Y_{v}$, $Bl_{\widetilde{\Gamma}_{v}}Bl_{\Delta_{v}}Y_{v}$ and $Bl_{\overline{\Delta}_{v}}Bl_{\Gamma_{v}}Y_{v}$ respectively. Recall that $K_{v}$ is normal, and hence so is $Bl_{\Omega_{v}}K_{v}$ by Proposition \ref{local fundamental diagram}. As for  $\Kvt$ and $Bl_{\widetilde{\Omega}_{v}}Bl_{\Sigma_{v}} \Kv = Bl_{\overline{\Sigma}_{v}}Bl_{\Omega_{v}}\Kv$, they are both smooth. It follows that the morphisms $\varepsilon_{v}$, $\widetilde{\varepsilon}_{v}$, $\overline{\varepsilon}_{v}$ and $\widehat{\varepsilon}_{1,v}(=\widehat{\varepsilon}_{2,v})$ can be identified with the quotient maps of the respective involutions $\tau_{v}$, $\widetilde{\tau}_{v}$, $\overline{\tau}_{v}$, $\widehat{\tau}_{1,v}$ and $\widehat{\tau}_{2,v}$.  
\end{rem}

\section{The birational geometry of $Y_{v}$} \label{birgeom}

In this section we describe the global geometry of $Y_{v}$, we show that
it is birational to an IHS manifold
of $K3^{[n]}$ type, and we  describe explicitly the birational map.

In the first part of the section, we consider the special case where  $A$ is a principally polarized abelian surface, whose N\'eron--Severi group  ois generated by the principal symmetric 
polarization $\Theta$.
As is well known, the linear system $|2\Theta|$ defines a morphism
$g_{|2\Theta|}:A\rightarrow |2\Theta|^{\vee}\simeq\mathbb{P}^{3}$
whose image 
is the singular Kummer surface $Kum_{s}$ of $A$, a nodal quartic surface
isomorphic to the quotient $A/\pm 1$. The smooth Kummer surface
$S$ of  $A$ is the blow up of $Kum_{s}$ along the singular locus $A[2]$. 

We are
going to show that in this case
$Y_{(0,2\Theta,2 )}$ is birational to the Hilbert scheme $S^{[3]}$. The
following remark collects some known results that we need in the proof.

\begin{rem}\label{riassunto}
\begin{enumerate}
\item{}The locus of $|2\Theta|$ parametrizing singular curves consists of $17$
irreducible divisors:
the divisor $R$ parametrizing reducible curves and, for any $2$ torsion
point $\alpha\in A[2]$,
the divisor  $N_{\alpha}$ parametrizing curves passing through $\alpha$.
The divisor $R$ is isomorphic to $Kum_{s}$ and a general point of $R$ corresponds to a
curve of the form $\Theta_{x}\cup \Theta_{-x}$, where $\Theta_{x}$ and $ \Theta_{-x}$ meet transversally outside of $A[2]$.
For every $ \alpha \in A[2]$, the divisor  $N_{\alpha}$ is isomorphic to $\mathbb{P}^{2}$ and
parametrizes curves whose images in $Kum_{s}$
are plane sections through the singular point $g_{|2\Theta|}(\alpha)$. The
general point of $N_{\alpha}$ corresponds to a curve $C$ that is a double cover of a quartic plane curve
with precisely one node; this double covers ramifies over the node and therefore $C$ has a node in $\alpha$ and no other singularity.

\item{}For the natural choices in the definition of the map $$\mathbf{a}_{(0,2\Theta,2 )}:M_{(0,2\Theta,2 )}(A,\Theta)\rightarrow A\times A^{\vee},$$ 
(see Introduction formula (\ref{Kv})), the subvariety  $K_{(0,2\Theta,2 )}:=\mathbf{a}_{(0,2\Theta,2 )}^{-1}(0,0)\subset M_{(0,2\Theta,2 )}(A,\Theta)$ parametrizes sheaves  whose determinant is equal to $\mathcal{O}(2\Theta)$  and whose second Chern class sums up to $0\in A$.
Since  $M_{(0,2\Theta,2 )}(A,\Theta)$ parametrizes pure dimension $1$ sheaves, there
exists a regular morphism $t: \Kv\rightarrow
|2\Theta|\simeq\mathbb{P}^{3}$, called the support morphism, which to every polystable sheaf associates
its Fitting subscheme (see \cite{lep}).
The morphism $t$ is surjective and since $K_{(0,2\Theta,2 )}$ has a resolution that is a IHS
manifold, all its fibers are $3$-dimensional.


\item{}Since the polarization $\Theta$ is symmetric, $-1^{*}$ induces an involution
on the moduli space $M_{(0,2\Theta,2 )}(A,\Theta)$ whose fixed locus contains
the variety  $K_{(0,2\Theta,2)}$. Indeed, any smooth curve $C\in |2\Theta |$
 is an \'etale double cover of  its image $g_{|2\Theta|}(C)$.
The pull back to $C$ of any degree--$3$ line bundle on  $g_{|2\Theta|}(C)$, is a stable sheaf of
$K_{(0,2\Theta,2)}$ which is $-1^*$--invariant. Moreover,  the pullback of two line bundles
on $g_{|2\Theta|}(C)$ are isomorphic if and
only if the  two line bundles
 differ by $2$ torsion line
bundle defining the \'etale double cover $C\rightarrow g_{|2\Theta|}(C)$.
Hence,  there exists a six dimensional algebraic subset of
$K_{(0,2\Theta,2)}$ that is fixed by the involution and hence,
by closure of the fixed locus,
the whole $K_{(0,2\Theta,2)}$ is fixed.

\item{}
If $C\in|2\Theta|$ is smooth or general in $R$ or $N_{\alpha}$, the general point of $t^{-1}(C)$
represents a sheaf that is locally free on its support. 
This holds because, for any nodal curve $C$, any torsion free sheaf on $C$ that is not locally free, 
is the limit of locally free sheaves on $C$ varying
in a family parametrized by $\mathbb{P}^{1}$.  Since any $\mathbb{P}^1$ has to be contracted by $\mathbf{a}_{(0,2\Theta,2 )}$, it follows that it has to be contained in $K_{(0,2\Theta,2)}$ and the claim follows.

\item{ }The inverse image on $K_{(0,2\Theta,2)}$ of an irreducible  surface contained in $|2\Theta|\simeq\mathbb{P}^{3}$
is irreducible.
Since $t$ is equidimensional, it suffices to prove that $t^{-1}(C)$ is irreducible for any curve $C$  that is
 smooth or general in $R$ or in $N_{\alpha}$. If $C$ is such a curve,  the locus $t^{-1}(C)^{lf}$ parametrizing sheaves in 
$t^{-1}(C)$ that are locally free on their support is dense in $t^{-1}(C)$ and, moreover, $-1$ has at most $1$ fixed point on $C$. It follows that, any $F\in t^{-1}(C)^{lf}$, the $-1$ action on $F$ 
can be linearized  in such a way that the action is trivial on the fiber over the fixed point. By Kempf descend Lemma 
(see Theorem 4.2.15 of  
\cite{HL}), this means that $F$ is the pull back of a line bundle on the irreducible nodal curve $g_{|2\Theta|}(C)$. 
Since the generalized Jacobian of an irreducible nodal curve is irreducible, $t^{-1}(C)^{lf}$ and its closure $t^{-1}(C)$ are also irreducible.
\end{enumerate}
\end{rem}

\begin{lem}\label{MZ}
Let  $A$ be a principally polarized abelian surface, with $NS(A)=\mathbb{Z}\Theta$.
Then $\widetilde{Y}_{(0,2\Theta,2)}$ is birational to the Hilbert scheme $S^{[3]}$. 
\end{lem}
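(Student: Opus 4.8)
The idea is to exhibit an explicit birational identification between $\widetilde{Y}_{(0,2\Theta,2)}$ and a moduli space of sheaves on the Kummer K3 surface $S$, and then invoke the fact that such moduli spaces are birational to $S^{[3]}$. Concretely, I would work over the support morphism $t: \Kv \to |2\Theta| \cong \P^3$. Recall (Remark \ref{riassunto}) that the generic curve $C \in |2\Theta|$ is a smooth \'etale double cover $C \to \bar C$ of its image $\bar C = g_{|2\Theta|}(C) \subset Kum_s$, and that the fiber $t^{-1}(C)$ is the kernel of $\Pic^6(C) \to A$, which is precisely the locus of line bundles pulled back from $\bar C$. The double cover $\varepsilon_{v}: Y_{v} \to \Kv$ (and hence its blow-up model $\widetilde Y_v = Bl_{\Delta_v} Y_v$, see Remark \ref{rem:ultimo}) is, away from the branch locus, the \'etale double cover determined by the $2$-torsion class in $\Pic(\Kvt \setminus \widetilde\Sigma_v)$. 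The key geometric claim is that, fiberwise over a smooth $C$, this double cover is exactly the natural $2:1$ pullback map $\Pic^3(\bar C') \to \Pic^6(C)$ described in the Introduction --- equivalently, that the involution $\tau_v$ is (generically) tensorization by the $2$-torsion line bundle $\eta$ cutting out the double cover $q: \wt A \to S$. This matches the computation sketched in the Introduction for principally polarized $A$.

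**Key steps.** First I would set up the Kummer diagram (\ref{diagramma SKA}) and the Mukai vector $w = (0, D, 1)$ on $S$ with $D = p^*\mathcal O(1)$, so that $M_w(S)$ has a Lagrangian fibration $M_w(S) \to |D| \cong \P^3$ over the same $\P^3$. Since $w^2 = 2$ and $w$ is primitive, $M_w(S)$ is IHS of $K3^{[3]}$-type, hence birational to $S^{[3]}$ by Yoshioka's results (the Picard rank $1$ hypothesis on $A$ makes $S$, and the chosen polarization, behave well). Second, I would construct the rational map $\Phi: M_w(S) \dashrightarrow \Kv$ via $q^* p_* = b_* a^*$ as in the Introduction, check it factors through $K_v$ (using simple-connectedness of $M_w(S)$ and that the image lands in a single Albanese fiber), and verify it is generically $2:1$ by the fiberwise description $\Pic^3(\bar C') \to \Pic^6(C)$ (a torsor under the $2:1$ isogeny $\Jac(\bar C') \to \ker[\Pic^6(C)\to A]$). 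Third --- the crux --- I would identify the deck involution of $\Phi$ with tensorization by $\eta = \mathcal O_S(\tfrac12 \sum E_i)$, and then show that $\Phi$ realizes $M_w(S)$ as (a birational model of) the double cover of $K_v$ branched along $\Sigma_v$: on smooth fibers the branch points of $\Pic^3(\bar C') \to \ker[\Pic^6(C)\to A]$ are exactly the $\eta$-fixed bundles, which lie over $\Sigma_v$, and one checks the branch divisor is $\Sigma_v$ and not larger by a dimension/irreducibility count (Remark \ref{riassunto}(5)). Finally, by the uniqueness statement in Theorem \ref{global2:1} (resp.\ Remark \ref{rem:ultimo}), any normal variety with a $2:1$ map to $K_v$ branched over $\Sigma_v$ is $Y_v$, so $M_w(S)$ is birational to $Y_v$ and hence to $Y_v$'s model $\widetilde Y_{(0,2\Theta,2)}$; composing with the birational equivalence $M_w(S) \sim S^{[3]}$ finishes the proof.

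**Main obstacle.** I expect the hard part to be the careful identification of the deck transformation of $\Phi$ with tensorization by $\eta$ together with the precise determination of the branch locus. The fiberwise picture over \emph{smooth} curves is transparent, but one must control what happens over the reducible divisor $R$ and the divisors $N_\alpha$ (Remark \ref{riassunto}(1)), where the covering curves degenerate, to be sure the branch divisor of the rational $2:1$ map is exactly $\Sigma_v$ (codimension $2$) and that no extra ramification divisor sneaks in --- this is where irreducibility of $t^{-1}(\text{surface})$ and the locally-free-on-support genericity from Remark \ref{riassunto}(4),(5) do the work. A secondary subtlety is matching the chosen $v$-generic polarization on $A$ with a suitable generic polarization on $S$ so that both $K_v$ and $M_w(S)$ are the ``right'' spaces and $\Phi$ is well-defined in codimension $1$; but since we only need a birational statement and may choose polarizations after the fact, this should be manageable. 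Once these are in place, the conclusion follows formally from the uniqueness of the branched double cover established in the previous section.
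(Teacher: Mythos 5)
Your proposal is correct and follows essentially the same route as the paper: the same degree two rational map $M_{(0,D,1)}(S)\dashrightarrow K_{(0,2\Theta,2)}$ obtained by pulling back sheaves along $A\to A/\pm 1$, the same use of Remark \ref{riassunto} to control what happens over the divisors $R$ and $N_{\alpha}$ and thereby pin the branch divisor down to $\widetilde{\Sigma}_{(0,2\Theta,2)}$, and the same final appeal to the uniqueness of the double cover branched there. The only difference is one of bookkeeping: the paper never identifies the deck involution with tensorization by $\eta$, but instead works on $\widetilde{K}_{(0,2\Theta,2)}$, shows that the image of the locus of regularity meets every divisor except possibly $\widetilde{\Sigma}_{(0,2\Theta,2)}$, and rules out the unramified case by simple connectedness.
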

 
\begin{proof}
 Let $D$ be the pull back on $S$ of a plane section of $Kum_s$.
We are going to show that $\widetilde{Y}_{(0,\Theta,2)}$ is birational to the smooth  projective moduli space $M_{(0,D,1)}$ parametrizing  sheaves on $S$
with Mukai vector $(0,D,1)$ and which are stable with respect to a fixed $(0,D,1)$--generic polarization. The moduli space  
$M_{(0,D,1)}$ is well known to be birational to  $S^{[3]}$ (see Proposition 1.3 of \cite{bea}).

By construction, there exists an isomorphism between linear systems $\psi:|D|\rightarrow |2\Theta|$. 
Moreover, any sheaf $F\in M_{(0,D,1)}$, 
whose support is a smooth curve, may be seen as a sheaf on $Kum_{s}$  and its pull back to $A$ is a stable  sheaf of $K_{(0,2\Theta,2)}$. It follows that
there exists a commutative diagram   
\begin{equation}
\label{DZ}
\xymatrix{
&  \widetilde{K}_{(0,2\Theta,2)}\ar[d]^{f_{(0,2\Theta,2)}}\\
  M_{(0,D,1)}\ar[d]_{s} \ar@{-->}[ur]^{\phi}  & K_{(0,2\Theta,2)} \ar[d]^{t}\\
  |D| \ar[r]_{\psi} & |2\Theta| ,\\
}
\end{equation}
where $s$ and $t$ are the two support morphisms. 
If $C\in|2\Theta|$ is a smooth curve, it is a connected \'etale double cover of the smooth curve $g_{|2\Theta|}(C)$ and since $g_{|2\Theta|}(C)\cap A[2]=\emptyset$, it
can be considered as a curve in $|D|$.
The restriction of $\phi$ on  
$s^{-1} (g_{|2\Theta|}(C))\simeq Pic^{3}(g_{|2\Theta|}(C))$  is therefore well defined and  gives  an  \'etale  double cover of  $(t\circ f_{(0,2\Theta,2)})^{-1} (C))
\simeq t^{-1} (C)\subset Pic^6(C)$ (see (3) of Remark \ref{riassunto}).
This shows that $\phi$ is a rational map of degree $2$.

In order to compare $\widetilde{Y}_{(0,2\Theta,2)}$ and $M_{(0,D,1)}$, we need to determine the branch divisor $B$ of $\phi$, i.e. the divisor on $\Kvt$
where a resolution of the indeterminacy of $\phi$ is not \'etale.
We have already seen that $B$ has to parametrize sheaves supported on singular curves.

Let $U\subset M_{(0,D,1)}$ be the biggest open subset where $\phi$ extends to a regular morphism. As   $M_{(0,D,1)}$ and $K_{(0,2\Theta,2)}$ have trivial 
canonical bundle, the differential of $\phi$ is an isomorphism at any point of $U$.
As a consequence, $\phi$ does not contract any positive dimensional subvariety  and $\phi(U)\subset \widetilde{K}_{(0,2\Theta,2)}$ is an open subset.

We claim that the open subset  $\phi(U)$ intersects any divisor of $\widetilde{K}_{(0,2\Theta,2)}$ with the possible exception of 
$\widetilde{\Sigma}_{(0,2\Theta,2)}$.
Since we have already shown that $(t\circ f_{(0,2\Theta,2)})^{-1} (C))\in \phi(U)$ if $C$ is smooth,  it remains to check this statement for divisors contained in
$(t\circ f_{(0,2\Theta,2)})^{-1} (R))$ and $(t\circ f_{(0,2\Theta,2)})^{-1} (N_{\alpha}))$. As $\Sigma_{(0,2\Theta,2)}\subset t^{-1}(R)$, 
by 5) of Remark \ref{riassunto},
the divisor  $(t\circ f_{(0,2\Theta,2)})^{-1} (R)$ is the union of $\widetilde{\Sigma}_{(0,2\Theta,2)}$ 
and the strict transform of $t^{-1}(R)$. Finally, $(t\circ f_{(0,2\Theta,2)})^{-1}N_{\alpha}$ is irreducible.

The general point $F$  of $t^{-1}(R)$ represents a line bundle supported on the general curve $C$ of $R$. As $C$ does not intersect $A[2]$, its image 
$g_{|2\Theta|}(C)\simeq C/\pm 1$ may be seen as a curve in $D$ and, as in the smooth case, $F$ descends to a line bundle on  $g_{|2\Theta|}(C)$. Hence $\phi(U)$ intersects
the strict transform of $t^{-1}(R)$.

Finally, by commutativity of diagram \eqref{DZ}, $\phi$ sends an open subset of $(\psi\circ s)^{-1}(N_{\alpha})$ to the irreducible divisor   
$(t\circ f_{(0,2\Theta,2)})^{-1}N_{\alpha}$. Since $(\psi\circ s)^{-1}(N_{\alpha})$ cannot be contracted, $\phi(U)$ also intersects 
$(t\circ f_{(0,2\Theta,2)})^{-1}N_{\alpha}$. This completes the proof of our claim.
  
Let $r:N\rightarrow \Kvt$ be a resolution of  the indeterminacy of $\phi$, hence $N$ ia smooth projective variety such that there exists
a commutative diagram 
\[\xymatrix{
 N\ar[d]^{b}  \ar[r]^{r} &  \widetilde{K}_{(0,2\Theta,2)}\\
  M_{(0,D,1)}\ar@{-->}[ur]^{\phi}\\
}
\]
where $b$ is birational and induces an isomorphism between $b^{-1}(U)$ and $U$.
Let $U'\subset \widetilde{K}_{(0,2\Theta,2)}\setminus \widetilde{\Omega}_{v}$ be the open subset where the fibers of $\xi$ are $0$ dimensional. 
Notice that, since $N$ and $\widetilde{K}_{(0,2\Theta,2)}$ are smooth, $r$ is flat over $U'$.
Therefore $r^{-1}(U')$ is a flat ramified double cover of $U'$ and, since 
$\widetilde{K}_{(0,2\Theta,2)}\setminus U'$ has codimension at least $2$ and $Pic(\widetilde{K}_{(0,2\Theta,2)})$ is torsion free, 
this double cover is determined by its branch divisor $B$, i.e. by the locus where fibers of 
$r$ are length $2$ non reduced subschemes.

We already  know that if $p\in U'\cap \phi(U)$ the fiber $r^{-1}(p)$ has a at least one component consisting of a reduced point of $U$. 
Hence $B$ is a divisor contained in $U'\setminus \phi(U)$: therefore, by our claim, either $B$ is empty or $B=U'\cap \widetilde{\Sigma}_{(0,2\Theta,2)}$.
The first case is impossible becouse $U'$ is simply connected and $N$ is irreducible. In the second case $r^{-1}(U')$ is the unique double cover of $U'$
ramified over $U'\cap \widetilde{\Sigma}_{(0,2\Theta,2)}$, hence $r^{-1}(U')$ is isomorphic to $\varepsilon_{v}^{-1}(U')\subset \widetilde{Y}_{(0,2\Theta,2)}$.
\end{proof}

The following Proposition  generalizes Lemma \ref{MZ}, by showing that $Y_{v}$
is always birational to an IHS manifold, and  describes a resolution of the indeterminacy of the birational map.

Recall that the exceptional divisor $\overline{\Gamma}_{v}$ of $Bl_{\Gamma_{v}}Y_{v}$ consists of the disjoint union of $256$ copies $I_{i,v}$ 
of the incidence variety $I\subset\mathbb{P}(V)\times\mathbb{P}(V)$,
each of which has two natural $\mathbb{P}^{2}$ fibrations given by the projections onto   $\mathbb{P}(V)$.
For any $i$, we let $p_{i}:I_{i,v}\rightarrow \mathbb{P}(V)$ be one of the $2$ projections. 
Since $Y_{v}$ is locally analytically isomorphic to the cone $W$, the  normal bundle of $I_{i,v}$ in $Bl_{\Gamma_{v}}Y_{v}$ has degree $-1$ on the fibers of $p_{i}$.

Therefore, by applying Nakano's contraction Theorem  (\cite{nak}),  there exists a complex manifold $\underline{Y}_{v}$ and a morphism
of complex manifolds $h_{v}:Bl_{\Gamma_{v}}Y_{v}\rightarrow \underline{Y}_{v}$
whose  exceptional locus  is $\overline{\Gamma}_{v}$ and is  
such that the image  $J_{i,v}:=h_{v}(I_{i,v})$ of  any component $\overline{\Gamma}_{v}$ is isomorphic to $\mathbb{P}^{3}$. Moreover, the restriction of 
$h_{v}$ on $I_{i,v}$ equals $p_{i}$ and $h_{v}$ realizes $Bl_{\Gamma_{v}}Y_{v}$ as the blow up of $\underline{Y}_{v}$ along the disjoint union 
$J:=h_{v}(\overline{\Gamma}_{v})$ of the $J_{i,v}$'s.

\begin{prop}\label{prop:birh}
Keeping the notation as above, the complex manifold  $\underline{Y}_{v}$ is a projective IHS manifold that is deformation 
equivalent to the Hilbert scheme parametrizing 
$0$-dimensional subschemes of length $3$ on a $K3$ surface.

\end{prop}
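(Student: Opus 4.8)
The plan is to prove the statement first in the single case where $A$ is a principally polarized abelian surface with $\NS(A)=\mathbb{Z}\Theta$ and $v=(0,2\Theta,2)$, where Lemma \ref{MZ} is available, and then to deduce the general case from a deformation argument.

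\emph{The special case.} Here $Bl_{\Delta_{v}}Y_{v}$, $Bl_{\Gamma_{v}}Y_{v}$ and $\underline{Y}_{v}$ are all birational to $Y_{v}$ (the first two through the blow--up morphisms, the third through the Nakano contraction $h_{v}$), hence mutually birational; since by Lemma \ref{MZ} the variety $\widetilde{Y}_{(0,2\Theta,2)}$, which equals $Bl_{\Delta_{v}}Y_{v}$ by Remark \ref{rem:ultimo}, is birational to $S^{[3]}$, the smooth compact complex manifold $\underline{Y}_{v}$ is birational to $S^{[3]}$. First I would check that $\underline{Y}_{v}$ is projective. The variety $Bl_{\Gamma_{v}}Y_{v}$ is projective, being finite over the projective variety $Bl_{\Omega_{v}}\Kv$. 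Moreover $Y_{v}$ has trivial dualizing sheaf, because $\varepsilon_{v}\colon Y_{v}\to\Kv$ is the quotient by an involution whose fixed locus $\Delta_{v}$ has codimension $2$, hence is étale in codimension $1$, and $\Kv$ has trivial dualizing sheaf; and near each of the $256$ points of $\Gamma_{v}$ the variety $Y_{v}$ is analytically a cone over the Fano fivefold $I\subset\mathbb{P}(V)\times\mathbb{P}(V)$, with $-K_{I}=\mathcal{O}(3,3)|_{I}$ three times the cone polarization $\mathcal{O}(1,1)|_{I}$. A standard discrepancy computation for blow--ups of cones then gives $K_{Bl_{\Gamma_{v}}Y_{v}}\sim 2\,\overline{\Gamma}_{v}$; since the normal bundle of a component of $\overline{\Gamma}_{v}$ has degree $-1$ on the fibres of the $\mathbb{P}^{2}$--fibration $p_{i}$ contracted by $h_{v}$, a line $\ell$ in such a fibre satisfies $K_{Bl_{\Gamma_{v}}Y_{v}}\cdot\ell=-2<0$. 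Thus $h_{v}$ is the contraction of the $K$--negative extremal rays $[\ell]$, and its target $\underline{Y}_{v}$ is projective; the same computation gives $h_{v}^{*}K_{\underline{Y}_{v}}\sim K_{Bl_{\Gamma_{v}}Y_{v}}-2\,\overline{\Gamma}_{v}\sim 0$, so $K_{\underline{Y}_{v}}\cong\mathcal{O}_{\underline{Y}_{v}}$. Pulling back a holomorphic symplectic form of $S^{[3]}$ along the birational map to $\underline{Y}_{v}$ and extending it across the (codimension $\ge 2$) indeterminacy locus produces a holomorphic $2$--form $\omega$ on $\underline{Y}_{v}$ whose third power is a global section of $\mathcal{O}_{\underline{Y}_{v}}$ that is nonzero, since generically it is the pullback of $\sigma^{3}\ne 0$; hence $\omega^{3}$ is a nonzero constant and $\omega$ is everywhere nondegenerate. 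Birational invariance of $h^{2,0}$ and of $\pi_{1}$ for smooth projective varieties gives $h^{2,0}(\underline{Y}_{v})=1$ and $\pi_{1}(\underline{Y}_{v})=1$, so $\underline{Y}_{v}$ is a projective irreducible holomorphic symplectic manifold, and, being birational to $S^{[3]}$, it is deformation equivalent to $S^{[3]}$ by Huybrechts' theorem that birational IHS manifolds are deformation equivalent (see \cite{huy_basic}).

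\emph{The general case.} For arbitrary $A$ and $v=2v_{0}$ with $v_{0}^{2}=2$ I would propagate the previous case over a connected family. The singular symplectic varieties $\Kv=K_{v}(A,H)$, for all admissible data, fit into a connected family over which, by \cite{per_rap}, the analytic type of the singularities along $\Sigma_{v}$ and $\Omega_{v}$ is constant; hence $\Sigma_{v}$, $\Omega_{v}$, the double cover $\varepsilon_{v}\colon Y_{v}\to\Kv$ of Theorem \ref{global2:1} — whose existence rests on the divisibility by $2$ of $\widetilde{\Sigma}_{v}$, a property stable in the family — and the relative blow--up $Bl_{\Gamma_{v}}Y_{v}$ all exist in families, and the Nakano contraction $h_{v}$ is available uniformly since the degree $-1$ of the relevant normal bundles is dictated by the fixed local model $W$. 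Consequently all the manifolds $\underline{Y}_{v}$ are deformation equivalent to one another; since the special case above occurs in this family, every $\underline{Y}_{v}$ is a projective IHS manifold deformation equivalent to $S^{[3]}$.

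The main obstacle is the projectivity of $\underline{Y}_{v}$: a Nakano contraction of a projective manifold need not be projective, and the decisive input is the discrepancy computation $K_{Bl_{\Gamma_{v}}Y_{v}}\cdot\ell=-2$, which exhibits $h_{v}$ as a Mori contraction of $K$--negative extremal rays. A secondary technical point is to make precise the relative Nakano contraction used in the family argument, checking the normal--bundle hypothesis fibrewise along the connecting family.
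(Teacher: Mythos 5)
Your overall architecture (trivial $K_{Y_v}$, $K_{Bl_{\Gamma_v}Y_v}\sim 2\overline{\Gamma}_v$, $K\cdot\ell=-2$, special case $v=(0,2\Theta,2)$ via Lemma \ref{MZ}, then a locally trivial deformation to reach general $v$) is the same as the paper's, but there is a genuine gap exactly at the point you yourself call decisive. From $K_{Bl_{\Gamma_v}Y_v}\cdot\ell=-2<0$ you conclude ``thus $h_v$ is the contraction of the $K$--negative extremal rays $[\ell]$, and its target $\underline{Y}_v$ is projective.'' This identification is not automatic. The Mori contraction of the ray $\mathbb{R}_{\ge 0}[\ell]$ contracts \emph{all} curves whose class lies on that ray; each component $I_{i,v}$ of $\overline{\Gamma}_v$ carries two rulings, with lines $r_i$ (fibers of $p_i$, the ruling contracted by the Nakano map $h_v$) and $l_i$ (the other ruling), and a priori $r_i$ and $l_i$ could be numerically proportional in $Bl_{\Gamma_v}Y_v$. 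If they were, the extremal contraction would collapse the whole divisor $I_{i,v}$ to a point (locally reproducing the singular $Y_v$), and would \emph{not} coincide with $h_v$; in that case the projectivity of the Mori output tells you nothing about $\underline{Y}_v$. The paper flags this explicitly (``A priori, it is not clear whether $r_i$ and $l_i$ are numerically equivalent'') and resolves it: whichever contraction occurs, one gets a $\mathbb{Q}$--factorial terminal variety with trivial canonical class; for $v=(0,2\Theta,2)$ Lemma \ref{MZ} makes this a minimal model of $S^{[3]}$, so Proposition 6.4 of \cite{gls} says it is a (smooth) IHS manifold, which rules out the case $r_i\equiv l_i$ and identifies the Mori contraction with the Nakano contraction $\underline{Y}_v$; for general $v$ the independence is transported along the locally trivial deformation of \cite{per_rap} (parallel transport preserves numerical independence). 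Your proposal never establishes $r_i\not\equiv l_i$, so the projectivity claim is unjustified in the special case, and nothing in your argument supplies it.

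The same gap propagates to your general case: your family argument can only show that $\underline{Y}_v$ is a smooth deformation of $\underline{Y}_{(0,2\Theta,2)}$, hence an IHS manifold of K3$^{[3]}$ type, but projectivity is not a deformation invariant of IHS manifolds, so it must again come from identifying $h_v$ with an extremal contraction of the projective variety $Bl_{\Gamma_v}Y_v$ --- which again needs the numerical independence of $r_i$ and $l_i$ for the given $v$, obtained in the paper precisely by parallel transport from the special case. The remaining ingredients of your special case (projectivity of $Bl_{\Gamma_v}Y_v$ by finiteness over $Bl_{\Omega_v}K_v$, extension of the symplectic form and nondegeneracy via $\omega^3$, birational invariance of $\pi_1$ and $h^{2,0}$, Huybrechts' theorem on birational IHS manifolds) are fine and give a slightly more self-contained route to the IHS property than the paper's appeal to \cite{gls}, but they all presuppose the projectivity you have not proved.
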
   

\begin{proof}
Notice that the ramification locus of $\varepsilon_{v}:Y_{v}\rightarrow \Kv$ has codimension $2$. It follows that the canonical divisor of
$Y_{v}$ is trivial and the canonical divisor of $Bl_{\Gamma_{v}}Y_{v}$ is supported on  $\overline{\Gamma}_{v}$.
As the normal bundle of $I_{i,v}$ in $Bl_{\Gamma_{v}}Y_{v}$ has degree $-1$ on the fibers of both the $\mathbb{P}^{2}$ fibrations  of $I_{i,v}$ , by adjunction, the canonical bundle of 
the smooth variety $Bl_{\Gamma_{v}}Y_{v}$ is $2\sum_{i=1}^{256} I_{i,v}$.

Let $r_{i}$ be a  line contained in a fiber of $p_{i}$ and let  $l_{i}$ be  a line contained in a fiber 
of the other  $\mathbb{P}^{2}$ fibration of $I_{i,v}$. A priori, it is not clear whether $r_{i}$ and $l_{i}$ are numerically equivalent. Nevertheless, since $Y_{v}$ is projective and $r_{i}$ and $l_{i}$ generate the cone of effective curves on $I_{i,v}$, the set $\{r_{i}\}$ represents $256$
 $K_{Bl_{\Gamma_{v}}Y_{v}}$-negative extremal rays of the Mori cone of $Bl_{\Gamma_{v}}Y_{v}$.
If $r_i$ and $l_{i}$ are equivalent, the contraction of $r_i$ contracts $I_{i,v}$ to a point admitting a Zariski neighborhood isomorphic to a 
Zariski neighborhood of the $i$-th singular point of $\Gamma_{v}$ in the normal variety $Y_{v}$. 
If $r_i$ and $l_{i}$ are independent, 
the contraction of $r_{i}$ can be identified with the Nakano contraction
restricting to $p_{i}$ on $I_{i,v}$.

In any case, the contraction of $r_{i}$ is divisorial and, by Corollary 3.18 of \cite{koll_mori},
it produces only $\mathbb{Q}$--factorial singularities.
Hence, after $256$ extremal contractions  we terminate with a $\mathbb{Q}$--factorial variety  with trivial canonical divisor 
and with terminal singularities. If $v=(0,2\Theta,2)$, by Lemma \ref{MZ}, the variety $M_{(0,2\Theta,2)}$ is a minimal model of the IHS manifold    
$S^{[3]}$ and by a theorem due to Greb, C. Lehn, and Rollenske (see Proposition 6.4 of \cite{gls}) it is an  IHS manifold. 
In particular $r_i$ and $l_{i}$ are always numerically independent and this IHS manifold is isomorphic to the Nakano contraction $\underline{Y}_{v}$.

To deal with the genaral case, recall from \cite[Theorem 1.6]{per_rap} (and its proof)  that   the singular variety $K_{v}$ 
can be deformed to $K_{(0,2\Theta,2)}$ using only isomorphisms induced by Fourier-Mukai transform and  locally trivial deformations induced 
by deformation of the underlying abelian surface (see Proposition 2.16 of \cite{per_rap}). 

Extending the construction of Theorem \ref{global2:1} to the case of a locally trivial deformation,
it is also possible to deform $Y_{v}$ to $Y_{(0,2\Theta,2)}$ by a locally trivial deformation.
By blowing up the subvariety consisting of singular points of all fibers in the total space of the deformation, we get
that $Bl_{\Gamma_{v}}Y_{v}$  can be deformed to $Bl_{\Gamma_{(0,2\Theta,2)}}Y_{(0,2\Theta,2)}$.
Up to an \'etale base change on the base of the deformation, 
we may also assume that the exceptional divisor consists of $256$ connected components $\mathcal{I}_{i}$, 
each of which has two fibration and  one of them restricts to $p_{i}$ on $I_{i,v}$.
Applying again Nakano's Theorem,  the $\mathcal{I}_{i}$'s may be contracted respecting the chosen fibration.

As a consequence, the complex manifold $\underline{Y}_{v}$ obtained from   $Bl_{\Gamma_{v}}Y_{v}$ by contracting 
the $r_{i}$'s  is deformation equivalent (via smooth deformations) to an IHS manifold $\underline{Y}_{(0,2\Theta,2)}$ that is birational to 
$S^{[3]}$.
It remains to show that  $\underline{Y}_{v}$ is projective. As in the case $v=(0,2\Theta,2)$, it suffices to show that 
$r_{i}$ and $l_{i}$  are numerically independent. This is true because parallel transport  preserves numerical independence  and the analogous statement
has been shown to hold on $Bl_{\Gamma_{(0,2\Theta,2)}}Y_{(0,2\Theta,2)}$.
\end{proof}

By construction, $\underline{Y}_{v}$ has a regular birational morphism to $Y_{v}$ contracting  $J$ to $\Gamma_{v}$.
In the following remark we show that the involution $\tau_{v}$ on $Y_{v}$ cannot be lifted to a regular involution on 
$\underline{Y}_{v}$.

\begin{rem}\label{rmk:ratinv}
Since the involution $\overline{\tau}_{v}:Bl_{\Gamma_{v}}Y_{v}\rightarrow Bl_{\Gamma_{v}}Y_{v}$ sends $\Gamma_{v}$ to itself, 
it descends to a rational involution
$\tau_{T}:\underline{Y}_{v}\dashrightarrow \underline{Y}_{v}$  restricting to a regular involution  on  the complement $\underline{Y}_{v}\setminus J_{v}$ of the 
union of the projective spaces $J_{i,v}$
in  $\underline{Y}_{v}$. Since, by definiton of $\tau$, the involution  $\overline{\tau}_{v}$ exchanges the two $\mathbb{P}^{2}$ fibrations on  $I_{i,v}$, 
the indeterminacy locus of $\tau_{T}$ is $J_{v}$. 
Finally, since $Bl_{\Gamma_{v}}Y_{v}\simeq Bl_{J_{v}}\underline{Y}_{v}$, the rational involution $\tau_{T}$ may be described as the composition
of a Mukai flop along $J_{v}$ and an isomorphism outside of this locus. 

\end{rem}

\section{The Hodge numbers}\label{sec:hodge}

Collecting the results of the previous sections, we finally  present  the new construction of  
$\Kvt$  that allows us to calculate Betti and Hodge numbers of $\Kvt$. 

To simplify notation, let us set
\[
\begin{aligned}
&\widehat{Y}_{v}:=Bl_{\widetilde{\Gamma}_{v}}Bl_{\Delta_{v}}Y_{v}=Bl_{\overline{\Delta}_{v}}Bl_{\Gamma_{v}}Y_{v}, \\
&\widehat{K}_{v}:=Bl_{\widetilde{\Sigma}_{v}}Bl_{\Omega_{v}}K_{v}=Bl_{\overline{\Delta}_{v}}Bl_{\Gamma_{v}}K_{v}, \\
&\overline{Y}_{v}:=Bl_{\Gamma_{v}}Y_{v}
\end{aligned}
\]

With this notation, the finite $2:1$ morphism 
$$\widehat{\varepsilon}_{v}:=\widehat{\varepsilon}_{1,v}=\widehat{\varepsilon}_{2,v}: \widehat{Y}_{v}\rightarrow \widehat{K}_{v}$$
is a double cover between smooth varieties and is branched over the smooth divisor $\widehat{\Sigma}_{v}$ (see (1) of Corollary \ref{global diagram cover}).
Hence, $\widehat{\varepsilon}_{v}$ realizes $\widehat{K}_{v}$ as the quotient of   $\widehat{Y}_{v}$ under 
the action of the associated involution $\widehat{\tau}_{v}:\widehat{Y}_{v}\rightarrow \widehat{Y}_{v}$.

This permits to reconstruct  $\widehat{K}_{v}$ starting from the IHS manifold $\underline{Y}_{v}$ 
of  $K3^{[3]}$--type and using only birational modifications  of smooth projective varieties and the finite $2:1$ morphism 
$\widehat{\varepsilon}_{v}$. 

The following commutative diagram contains all the  varieties and maps that we will use.

\[
\xymatrix{
 & 
\ar@(ur,r)[]^{\widehat{\tau}_{v}}\widehat{Y}_{v}  
\ar[d]_{\widehat{\varepsilon}_{v}}  
\ar[dr]^{\beta_{v}} & & \\
 &  \widehat{K}_{v}  \ar[dl]_{\rho_{v}}   & \ar@(ur,r)[]^{\overline{\tau}_{v}}\overline{Y}_{v} \ar[dr]^{h_{v}} & \\ 
\widetilde{K}_{v}& & & \underline{Y}_{v} \ar@(ur,r)@{-->}[]^{\underline{\tau}_{v}}\\
}
\]
Here $\beta_{v}$ is the blow up map of $\overline{Y}_{v}$ along the smooth subvariety $\overline{\Delta}_{v}$ and 
$\overline{\tau}_{v}$ is the involution associated with $\overline{\tau}_{v}$ (see Remark \ref{involuzioni}).

Notice that this diagram contains only maps between smooth varieties that appear in (2) of Corollary  \ref{global diagram cover}
and any diagonal map that appears is the blow up of a smooth variety  along a smooth subvariety.

The IHS manifold $\underline{Y}_{v}$ carries a rational  involution 
$\underline{\tau}_{v}$ whose indeterminacy locus $J_{v}$ is the disjoint union of $256$ projective $3$--dimensional spaces
(see Remark \ref{rmk:ratinv}). The rational involution $\underline{\tau}_{v}$ lifts to the regular involution 
$\overline{\tau}_{v}$ on the blow up $\overline{Y}_{v}$ of $\underline{Y}_{v}$ along $J_{v}$, which in turn
 lifts to the involution $\widehat{\tau}_{v}:\widehat{Y}_{v}\rightarrow \widehat{Y}_{v}$ 
on the blow up of $\overline{Y}_{v}$ along the fixed locus $\overline{\Delta}_{v}$  of $\overline{\tau}_{v}$
(see (2) of Corollary \ref{cor:rambra}). Finally, the quotient $\widehat{K}_{v}$ of 
$\widehat{Y}_{v}$ modulo  $\widehat{\tau}_{v}$ is the blow up of  $\widetilde{K}_{v}$ along 
the union $\widetilde{\Omega}_{v}$ of 256 disjoint copies of the smooth $3$--dimensional quadric $G$.

The strategy to compute the Hodge numbers of $\widetilde{K}_{v}$ is the following.
Since $\widehat{K}_{v}$ is
the quotient of   $\widehat{Y}_{v}$ by 
the action of $\widehat{\tau}_{v}$, the Hodge numbers of $\widehat{K}_{v}$ that determine the Hodge numbers of $\widetilde{K}_{v}$ are the  
$\widehat{\tau}_{v}$-invariant Hodge numbers of  $\widehat{Y}_{v}$. 
The Hodge numbers of $\widehat{Y}_{v}$ can be easily computed in terms of the known Hodge numbers 
of the IHS manifold $\underline{Y}_{v}$ of  $K3^{[3]}$ type, and the action of $\widehat{\tau}_{v}$ is determined by the action
of the rational involution $\underline{\tau}_{v}:\underline{Y}_{v}\dashrightarrow \underline{Y}_{v}$ (see Remark \ref{rmk:ratinv}) 
on the Hodge groups of $\underline{Y}_{v}$. Finally, by Markman's monodromy results, 
this  action only depends on its part on the second cohomology group that is easy to compute.

Following this strategy, it turns out that the Betti numbers of $\widetilde{K}_{v}$ can be computed without 
considering the action $\widehat{\tau}_{v}$ on the cohomology of $\widehat{K}_{v}$.

\begin{prop}
The odd Betti numbers of $\Kvt$ are zero and the even ones are 
\[
h^2(\Kvt)= 8, \quad h^4(\Kvt)=199, \quad h^6(\Kvt)=1504.
\]
\end{prop}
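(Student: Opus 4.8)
The plan is to read off the Betti numbers from the commutative diagram above, which presents $\widehat{K}_{v}$ simultaneously as the blow up $Bl_{\widetilde{\Omega}_{v}}\widetilde{K}_{v}$ and as the quotient $\widehat{Y}_{v}/\widehat{\tau}_{v}$, and presents $\widehat{Y}_{v}$ as an iterated blow up of the IHS manifold $\underline{Y}_{v}$ of $K3^{[3]}$--type. Since rational cohomology of a finite quotient is the invariant part, the Betti numbers of $\widehat{K}_{v}$ are the $\widehat{\tau}_{v}$--invariant Betti numbers of $\widehat{Y}_{v}$; and since $\widetilde{\Omega}_{v}$ is a disjoint union of $256$ copies of the smooth quadric threefold $G$, of codimension $3$ in the sixfold $\widetilde{K}_{v}$, the blow up formula gives $b_{k}(\widehat{K}_{v})=b_{k}(\widetilde{K}_{v})+b_{k-2}(\widetilde{\Omega}_{v})+b_{k-4}(\widetilde{\Omega}_{v})$. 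So everything reduces to computing $b_{\ast}(\widehat{Y}_{v})$ together with the $\widehat{\tau}_{v}$--action on it.

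First I would compute $b_{\ast}(\widehat{Y}_{v})$. The Betti numbers of $\underline{Y}_{v}$ coincide with those of $S^{[3]}$ for a $K3$ surface $S$, by deformation invariance and G\"ottsche's formula; in particular $\underline{Y}_{v}$ has no odd cohomology. Applying the blow up formula to $\overline{Y}_{v}=Bl_{J_{v}}\underline{Y}_{v}$, with $J_{v}$ a disjoint union of $256$ copies of $\mathbb{P}^{3}$ of codimension $3$, and then to $\widehat{Y}_{v}=Bl_{\overline{\Delta}_{v}}\overline{Y}_{v}$, with $\overline{\Delta}_{v}$ a smooth fourfold of codimension $2$, expresses $b_{\ast}(\widehat{Y}_{v})$ in terms of $b_{\ast}(\underline{Y}_{v})$ and $b_{\ast}(\overline{\Delta}_{v})$. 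The latter I would obtain from the identification $\overline{\Delta}_{v}\cong\overline{\Sigma}_{v}\cong (Bl_{(A\times A^{\vee})[2]}(A\times A^{\vee}))/\pm 1$ of Section \ref{sec:reso}: compute the cohomology of the blow up of the abelian fourfold $A\times A^{\vee}$ at its $256$ two--torsion points and take the $\pm 1$--invariant part, using that $-1$ acts on $H^{k}(A\times A^{\vee})$ by $(-1)^{k}$ and trivially on the exceptional $\mathbb{P}^{3}$'s. This yields an explicit table for $b_{\ast}(\widehat{Y}_{v})$; in particular every odd Betti number of $\widehat{Y}_{v}$ vanishes.

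The vanishing of the odd Betti numbers of $\widetilde{K}_{v}$ is then immediate: $\widehat{K}_{v}=\widehat{Y}_{v}/\widehat{\tau}_{v}$ inherits the vanishing of odd cohomology, and since $\widetilde{\Omega}_{v}$ also has no odd cohomology the blow up formula gives $b_{2k+1}(\widetilde{K}_{v})=b_{2k+1}(\widehat{K}_{v})=0$.

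For the even Betti numbers I must identify the $\widehat{\tau}_{v}$--invariant part of $H^{2k}(\widehat{Y}_{v})$. The contribution of the exceptional divisor of $Bl_{\overline{\Delta}_{v}}$ is entirely invariant, because $\widehat{\tau}_{v}$ fixes that divisor pointwise (the action on the normal bundle of the fixed locus is by $-1$, hence trivial on its projectivization); the contribution of each component $I_{i,v}$ of the exceptional divisor of $Bl_{J_{v}}$ splits into invariant and anti--invariant parts according to the interchange of the two $\mathbb{P}^{2}$--rulings of the incidence variety by $\overline{\tau}_{v}$, a purely combinatorial computation on $I_{i,v}$; and the contribution of $H^{\ast}(\underline{Y}_{v})$ carries the action of the birational involution $\underline{\tau}_{v}$, which by Markman's monodromy results is governed by its restriction to $H^{2}(\underline{Y}_{v})$ --- a reflection whose invariant sublattice has rank $7$, as one sees either from the known value $b_{2}(\widetilde{K}_{v})=8$ or from the explicit model $v=(0,2\Theta,2)$ of Lemma \ref{MZ}, where $\underline{\tau}_{v}$ is tensorization by $\eta$. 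Combining the three contributions gives $b_{\ast}(\widehat{K}_{v})$, and subtracting the quadric contributions yields $b_{2}(\widetilde{K}_{v})=8$, $b_{4}(\widetilde{K}_{v})=199$, $b_{6}(\widetilde{K}_{v})=1504$. I expect the main obstacle to be exactly this last step: transferring the easy $H^{2}$--level information to all of $H^{\ast}(\underline{Y}_{v})$ via Markman's results, while keeping track of the fact that $\underline{\tau}_{v}$ is only birational, so that its action on cohomology must be taken through the Mukai flop of Remark \ref{rmk:ratinv} and checked to agree with the genuine regular involution $\overline{\tau}_{v}$ on $\overline{Y}_{v}$ appearing in the diagram.
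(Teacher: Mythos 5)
Your treatment of the odd Betti numbers is essentially the paper's own argument: both reduce to the vanishing of odd cohomology of $\widehat{Y}_{v}$, obtained from the blow ups of $\underline{Y}_{v}$ (of $K3^{[3]}$ type, so no odd cohomology) along the $256$ copies of $\mathbb{P}^{3}$ and along $\overline{\Delta}_{v}\simeq (Bl_{(A\times A^{\vee})[2]}(A\times A^{\vee}))/\pm 1$, whose odd cohomology dies because $-1$ acts by $(-1)^{k}$ on $H^{k}$ of the torus; the paper then concludes by the injectivity of $H^{\bullet}(\widetilde{K}_{v})\to H^{\bullet}(\widehat{Y}_{v})$ under the surjection $\rho_{v}\circ\widehat{\varepsilon}_{v}$, while you phrase it through the quotient and the blow up formula for $\rho_{v}$ --- the same substance.

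For the even Betti numbers your route is genuinely different from the paper's proof of this proposition, and much heavier. The paper does \emph{not} compute the $\widehat{\tau}_{v}$--invariant cohomology here: it quotes $b_{2}(\widetilde{K}_{v})=8$ from \cite{OG2} and $\chi_{top}(\widetilde{K}_{v})=1920$ from \cite{rap_phd}, and then uses Salamon's relation $2\sum_{j}(-1)^{j}(3j^{2}-n)b_{2n-j}=nb_{2n}$, which for $n=3$ and vanishing odd Betti numbers becomes $18b_{4}+90b_{2}+210=3b_{6}$; together with the Euler characteristic equation this pins down $b_{4}=199$, $b_{6}=1504$ by solving two linear equations. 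What you propose instead --- computing the invariant part of $H^{\bullet}(\widehat{Y}_{v})$ contribution by contribution and transferring the $H^{2}$--level action to all of $H^{\bullet}(\underline{Y}_{v})$ via Markman --- is exactly the machinery the paper deploys afterwards for the \emph{Hodge} numbers (Lemmas \ref{Kt and Ktt}, \ref{X and Y}, \ref{3fin}, \ref{4fin}), so the route is viable and in the end proves more; but be aware that the step you flag as ``the main obstacle'' is not a formality: one must prove that $\overline{\tau}_{v}^{*}$ agrees with the monodromy operator $m(\underline{\tau}_{v})$ on the complement of the exceptional classes, and that each class $[J_{i,v}]$ is $m(\underline{\tau}_{v})$--antiinvariant (in the paper this needs the specialization of the graph, Fulton's key formula for the self--correspondence of $J_{i,v}$ with its cotangent normal bundle, and a K\"ahler cone argument to exclude the invariant sign), and only then does the rank--$7$ invariant sublattice of $H^{2}(\underline{Y}_{v})$ feed into Markman's $\mathrm{Sym}$--power decomposition. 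Note also that the rank--$7$ input is obtained from the quoted value $b_{2}(\widetilde{K}_{v})=8$, exactly as in the paper, so there is no circularity --- but then your argument, like the paper's, is really only producing $b_{4}$ and $b_{6}$. In short: your plan is correct and reproduces the paper's later Hodge--theoretic computation, but it misses the short argument actually used here, whose only extra inputs are the Euler characteristic and Salamon's formula.
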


\begin{proof} 
By Proposition \ref{prop:birh}, $\underline{Y}_{v}$ is deformation equivalent to the Hilbert scheme parametrizing length $3$ subschemes on a $K3$
surface, hence its odd Betti numbers are zero. 
By construction $\overline{Y}_{v}$ is the blow up of $\underline{Y}_{v}$ along $256$ disjoint projective spaces. Since the odd cohomology of the projective space is trivial, the same holds for $\overline{Y}_{v}$. 
By definition, $\widehat{Y}_{v}$ is the blow up of $\overline{Y}_{v}$ along $\overline{\Delta}_{v}$. We have already recalled that the ramification locus  $\overline{\Delta}_{v}$ of $\overline{\varepsilon}_{v}$ is isomorphic to the corresponding branch locus $\overline{\Sigma}_{v}$ which, by Remark
\ref{rmk:esplsigma}, is isomorphic to $(Bl_{(A\times A^{\vee})[2]}(A\times A^{\vee}))\slash \pm 1$.
As the odd cohomology classes of a torus are always antiinvariant under the action of $\pm 1$, the odd Betti numbers  of $\widehat{Y}_{v}$ are zero.
Since $\rho_{v}\circ \widehat{\varepsilon}_{v}: \widehat{Y}_{v}\rightarrow \Kvt$ is a regular surjective map between smooth projective varieties,
the rational cohomology of $\Kvt$ injects into the rational cohomology of $\widehat{Y}_{v}$. Hence the odd Betti numbers of $\Kvt$
are zero.
 
We already know that $h^2(\Kvt)= 8$ \cite{OG2} and that $\chi_{top}(\Kvt)=1920$ \cite{rap_phd}. 
The result follows using Salamon's formula \cite{Salamone}, which gives linear relations among the Betti numbers of a $2n$--dimensional irreducible holomorphic symplectic variety
\[
2\sum_{j=0}^{2n}(-1)^j(3j^2-n)b_{2n-j}=nb_{2n}.
\]
In our case this yields
\[
18 b_4+90 b_2+210=3b_6,
\]
Solving these two equations, we obtain the proposition.
\end{proof}

In order to determine the Hodge numbers of $\Kvt$, we first relate 
the Hodge numbers of $\Kvt$ with the $\overline{\tau}_{v}$--invariant Hodge numbers of $\overline{Y}_{v}$.
More specifically, we have the following lemmas.

\begin{lem} \label{Kt and Ktt} 
\begin{enumerate} 
\item{The following equalities of Hodge numbers hold 
\[
\begin{array} {l}
h^{p,q}(\widehat{K}_{v})= h^{p,q}(\Kvt) \;\; {\rm if} \;\;p\ne q,\\
h^{1,1}(\widehat{K}_{v})= h^{1,1}(\Kvt) +{256}, \\
h^{2,2}(\widehat{K}_{v})= h^{2,2}(\Kvt) +{ 512}, \\
h^{3,3}(\widehat{K}_{v})= h^{3,3}(\Kvt) +{ 512}. \\
\end{array}
\]}
\item{The vector space $H^{p,q}(\widehat{Y}_{v})^{\widehat{\tau}_{v}}$ of $\widehat{\tau}_{v}$ invariant $(p,q)$-forms on $\widehat{Y}_{v}$ 
is isomorphic to $H^{p,q}(\widehat{K}_{v})$.}
\end{enumerate}

\end{lem}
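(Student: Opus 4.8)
The plan is to deduce both statements from two standard facts: the blow-up formula for rational Hodge structures for part (1), and the comparison between the rational cohomology of a finite quotient and the invariant cohomology of the cover for part (2).

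For part (1), recall from the constructions of Section \ref{global} (see Corollary \ref{corfd} and the discussion preceding this lemma) that $\rho_v\colon\widehat{K}_v\to\Kvt$ is the blow-up of the smooth projective $6$-fold $\Kvt$ along $\widetilde{\Omega}_v$, a disjoint union of $256$ copies of the Lagrangian Grassmannian $G$. Since $G$ is a smooth quadric threefold, it is a cellular variety: its rational cohomology is $\mathbb{Q}$ in each even degree $0,2,4,6$, vanishes in odd degree, and $H^{2i}(G)$ is of pure Hodge type $(i,i)$. Hence $h^{p,q}(\widetilde{\Omega}_v)=256$ for $p=q\in\{0,1,2,3\}$ and $h^{p,q}(\widetilde{\Omega}_v)=0$ otherwise. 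As $\widetilde{\Omega}_v$ has codimension $3$ in $\Kvt$, the blow-up formula gives an isomorphism of rational Hodge structures
\[
H^{p,q}(\widehat{K}_v)\;\cong\;H^{p,q}(\Kvt)\,\oplus\,H^{p-1,q-1}(\widetilde{\Omega}_v)\,\oplus\,H^{p-2,q-2}(\widetilde{\Omega}_v),
\]
the $j$-th extra summand being Tate-twisted so as to shift the Hodge type by $(j,j)$. Substituting the Hodge numbers of $\widetilde{\Omega}_v$ gives at once $h^{p,q}(\widehat{K}_v)=h^{p,q}(\Kvt)$ for $p\ne q$ and the three stated diagonal identities for $(p,q)\in\{(1,1),(2,2),(3,3)\}$; the analogous diagonal identities in degrees $4,5,6$ then follow by Poincar\'e duality on the $6$-folds $\widehat{K}_v$ and $\Kvt$.

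For part (2), recall from Remark \ref{involuzioni} that the finite degree-$2$ morphism $\widehat{\varepsilon}_v\colon\widehat{Y}_v\to\widehat{K}_v$ between smooth projective varieties is the quotient map of the involution $\widehat{\tau}_v$. For any finite group $\Gamma$ acting on a smooth projective variety $Y$ with quotient $\pi\colon Y\to Y/\Gamma$, the pullback $\pi^{*}\colon H^{k}(Y/\Gamma,\mathbb{Q})\to H^{k}(Y,\mathbb{Q})$ is injective with image the invariant subspace $H^{k}(Y,\mathbb{Q})^{\Gamma}$, a left inverse being the transfer $\tfrac{1}{|\Gamma|}\pi_{*}$; since $\pi^{*}$ is a morphism of Hodge structures and $\Gamma$ acts by biholomorphisms, it identifies $H^{p,q}(\widehat{K}_v)$ with $H^{p,q}(\widehat{Y}_v)^{\widehat{\tau}_v}$, which is the assertion.

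Both arguments are formal once the geometry of the preceding sections is in place. The only point requiring some care is the bookkeeping in part (1): matching the Tate twists in the blow-up formula with the diagonal shifts in Hodge type, and verifying that the three displayed identities together with Poincar\'e duality account for all diagonal degrees (the degrees $0$ and $6$ being unchanged for trivial reasons). I do not anticipate any genuine obstacle.
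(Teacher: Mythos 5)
Your proposal is correct and follows essentially the same route as the paper: part (1) is exactly the blow-up formula applied to $\rho_v\colon\widehat{K}_v\to\Kvt$, a blow-up along $256$ disjoint quadric threefolds whose cohomology is one--dimensional of type $(i,i)$ in even degrees, and part (2) is the standard identification $H^{p,q}(\widehat{Y}_v/\widehat{\tau}_v)\cong H^{p,q}(\widehat{Y}_v)^{\widehat{\tau}_v}$ via pullback and transfer, using Remark \ref{involuzioni}. The paper states these two facts without spelling out the bookkeeping, so your write-up is just a more explicit version of the same argument.
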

\begin{proof}
(1) follows from the fact that $r: \widehat{K}_{v}  \to \Kvt$ is the blow up along the $256$ quadrics $G_i \subset \Kvt$, and that the cohomology of a $3$--dimensional quadric is one--dimensional in even degrees and zero otherwise.
(2) holds because $\Kvt\simeq  \widehat{Y}_{v}\slash \widehat{\tau}_{v}$ (see Remark \ref{involuzioni}).
\end{proof}

In the following lemma we set ${a\choose { b}}:=0$ if $b>a$ or $b<0$.

\begin{lem} \label{X and Y} The  $\widehat{\tau}_{v}$--invariant Hodge numbers of $\widehat{Y}_{v}$
and the  $\overline{\tau}_{v}$--invariant Hodge numbers of $\overline{Y}_{v}$ are related in the following way
\[
\begin{array} {lll}
h^{p,q}(\widehat{Y}_{v})^{\widehat{\tau}_{v}}= h^{p,q}(\overline{Y}_{v})^{\overline{\tau}_{v}}=0, & &\text{ for }  \, p+ q \, \text{ odd }   , \\
h^{p,q}(\widehat{Y}_{v})^{\widehat{\tau}_{v}}= h^{p,q}(\overline{Y}_{v})^{\overline{\tau}_{v}}+{4 \choose {p-1}}{4\choose {q-1}}, 
 & &\text{ for }  \, p+ q \, \text{ even } \, \text{ and } \,\, p \neq q  ,\\
h^{p,p}(\widehat{Y}_{v})^{\widehat{\tau}_{v}}= h^{p,p}(\overline{Y}_{v})^{\overline{\tau}_{v}}+{4 \choose {p-1}}^{2}, & & \text{ for } \, p=0,1,5,6\, ,\\
h^{p,p}(\widehat{Y}_{v})^{\widehat{\tau}_{v}}= h^{p,p}(\overline{Y}_{v})^{\overline{\tau}_{v}}+{4 \choose {p-1}}^{2}+256, & & \text{ for } \, p=2,3,4 \, .
\end{array}
\]
\end{lem}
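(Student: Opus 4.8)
The plan is to read off the Lemma from the blow-up formula for cohomology (as rational Hodge structures) applied to the blow-up $\beta_{v}\colon\widehat{Y}_{v}\to\overline{Y}_{v}$ along the smooth codimension-$2$ center $\overline{\Delta}_{v}$, combined with the observation that the exceptional divisor of $\beta_{v}$ is precisely the ramification divisor $\widehat{\Delta}_{v}$ of $\widehat{\varepsilon}_{v}$ and is therefore fixed pointwise by $\widehat{\tau}_{v}$. First I would write $j\colon E:=\widehat{\Delta}_{v}\hookrightarrow\widehat{Y}_{v}$ for this exceptional divisor and $\pi_{E}\colon E\to\overline{\Delta}_{v}$ for the induced $\P^{1}$-bundle structure $E=\P(N_{\overline{\Delta}_{v}/\overline{Y}_{v}})$, and record the blow-up decomposition
\[
H^{p,q}(\widehat{Y}_{v})=\beta_{v}^{*}H^{p,q}(\overline{Y}_{v})\ \oplus\ j_{*}\pi_{E}^{*}H^{p-1,q-1}(\overline{\Delta}_{v}),
\]
in which the two summands are canonically complementary.

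The heart of the argument is that $\widehat{\tau}_{v}^{*}$ preserves both summands. Since $\widehat{\tau}_{v}$ is the lift of $\overline{\tau}_{v}$ one has $\beta_{v}\circ\widehat{\tau}_{v}=\overline{\tau}_{v}\circ\beta_{v}$, so $\widehat{\tau}_{v}^{*}$ maps $\beta_{v}^{*}H^{p,q}(\overline{Y}_{v})$ to itself and acts there as $\overline{\tau}_{v}^{*}$. On the other summand, by Corollary \ref{cor:rambra} the divisor $E=\widehat{\Delta}_{v}$ is the ramification locus of the finite degree-$2$ morphism $\widehat{\varepsilon}_{v}$, hence $\widehat{\tau}_{v}$ restricts to the identity on $E$ and $\widehat{\tau}_{v}^{-1}(E)=E$ scheme-theoretically; base change then gives $\widehat{\tau}_{v}^{*}(j_{*}\pi_{E}^{*}\alpha)=j_{*}(\widehat{\tau}_{v}|_{E})^{*}\pi_{E}^{*}\alpha=j_{*}\pi_{E}^{*}\alpha$ for every $\alpha\in H^{p-1,q-1}(\overline{\Delta}_{v})$, so $\widehat{\tau}_{v}^{*}$ is the identity on the second summand. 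Taking $\widehat{\tau}_{v}$-invariants therefore yields
\[
h^{p,q}(\widehat{Y}_{v})^{\widehat{\tau}_{v}}=h^{p,q}(\overline{Y}_{v})^{\overline{\tau}_{v}}+h^{p-1,q-1}(\overline{\Delta}_{v}).
\]

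It then remains to compute $h^{p-1,q-1}(\overline{\Delta}_{v})$. By Remark \ref{rmk:esplsigma} and the description of $\overline{\Sigma}_{v}$ recalled there, $\overline{\Delta}_{v}\simeq\overline{\Sigma}_{v}\simeq(Bl_{(A\times A^{\vee})[2]}(A\times A^{\vee}))/\pm1$, where $A\times A^{\vee}$ is a $4$-dimensional abelian variety with $2^{8}=256$ points of order $2$. Since $-1$ acts on $H^{p,q}(A\times A^{\vee})$ by $(-1)^{p+q}$ and acts trivially on each of the $256$ exceptional copies of $\P^{3}$, the blow-up formula together with $h^{p,q}(A\times A^{\vee})=\binom{4}{p}\binom{4}{q}$ gives $h^{p,q}(\overline{\Delta}_{v})=\binom{4}{p}\binom{4}{q}$ when $p+q$ is even (with an extra summand of dimension $256$ exactly when $(p,q)\in\{(1,1),(2,2),(3,3)\}$) and $h^{p,q}(\overline{\Delta}_{v})=0$ when $p+q$ is odd. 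Substituting $h^{p-1,q-1}(\overline{\Delta}_{v})$ into the displayed identity produces the four cases of the Lemma; for $p+q$ odd one uses in addition that $\overline{Y}_{v}$, being a blow-up of the IHS manifold $\underline{Y}_{v}$ of $K3^{[3]}$-type along disjoint $\P^{3}$'s (Proposition \ref{prop:birh}), has vanishing odd cohomology, so $h^{p,q}(\overline{Y}_{v})^{\overline{\tau}_{v}}=0$ there.

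The step I expect to be the main obstacle is the claim that $\widehat{\tau}_{v}$ acts trivially on the ``new'' classes $j_{*}\pi_{E}^{*}H^{p-1,q-1}(\overline{\Delta}_{v})$: this rests on $\widehat{\tau}_{v}$ fixing $E$ pointwise and on the compatibility $\widehat{\tau}_{v}^{*}j_{*}=j_{*}(\widehat{\tau}_{v}|_{E})^{*}$, which one must justify either through the ramification-divisor description above or, equivalently, by noting that $\overline{\tau}_{v}$ acts as $-1$ on the rank-$2$ normal bundle $N_{\overline{\Delta}_{v}/\overline{Y}_{v}}$ — because $\overline{Y}_{v}/\overline{\tau}_{v}=Bl_{\Omega_{v}}K_{v}$ has an $A_{1}$-singularity along $\overline{\Sigma}_{v}$ (see Proposition \ref{fundamental diagram}) — so that $\widehat{\tau}_{v}$ is trivial on $E=\P(N_{\overline{\Delta}_{v}/\overline{Y}_{v}})$. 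The remaining points, namely the blow-up decomposition and the $\pm1$-action on the cohomology of the torus $A\times A^{\vee}$, are routine.
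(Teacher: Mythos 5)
Your proposal is correct and follows essentially the same route as the paper: the blow-up formula for $\beta_{v}\colon\widehat{Y}_{v}\to\overline{Y}_{v}$ along $\overline{\Delta}_{v}$, the observation that the new classes are pushforwards from the exceptional divisor $\widehat{\Delta}_{v}$, which is the fixed locus of $\widehat{\tau}_{v}$ and hence contributes only invariant classes, and the computation of $h^{p,q}(\overline{\Delta}_{v})$ as the $\pm 1$--invariant Hodge numbers of $Bl_{(A\times A^{\vee})[2]}(A\times A^{\vee})$. Your write-up is in fact slightly more explicit than the paper's about why $\widehat{\tau}_{v}^{*}$ preserves the two summands of the decomposition, but the underlying argument is the same.
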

\begin{proof}
The morphism $\beta_{v}: \widehat{Y}_{v}\to \overline{Y}_{v}$ is the blow up of $\overline{Y}_{v}$ along a smooth subvariety isomorphic to $\overline{\Delta}_{v}$. 
By Corollary \ref{cor:rambra} and Remark \ref{rmk:esplsigma}, the variety  $\overline{\Delta}_{v}$ is isomorphic to
$Bl_{A \times A^{\vee}[2]} (A \times A^{\vee}  \slash \pm 1)$, hence its  Hodge numbers are the $\pm 1$--invariant Hodge numbers of  $Bl_{A \times A^{\vee}[2]}$. In other words 
\[
\begin{array} {ll}
h^{p,q}(\overline{\Delta}_{v})=0,  &\text{ for }  \, p+ q \, \text{ odd }  , \\
h^{p,q}(\overline{\Delta}_{v})=h^{p,q}(A \times A^{\vee}),  & \text{ for }  \, p+ q \, \text{ even}  \,\, \text{ and } \, p \neq q ,\\
h^{p,p}(\overline{\Delta}_{v})=h^{p,p}(A \times A^{\vee}), & \text{ for } \, p=0,4\, ,\\
h^{p,p}(\overline{\Delta}_{v})=h^{p,p}(A \times A^{\vee})+256, & \text{ for } \, p=1,2,3\, .
\end{array}
\]
With $h^{p,q}(A \times A^{\vee})={4 \choose p}{4\choose q}$. 
As a consequence (see Theorem 7.31 \cite{voi}), the Hodge numbers of $\widehat{Y}_{v}$ 
satisfy   
\[\begin{array} {lll}
h^{p,q}(\widehat{Y}_{v})= h^{p,q}(\overline{Y}_{v})=0, & &\text{ for }  \, p+ q \, \text{ odd }   , \\
h^{p,q}(\widehat{Y}_{v})= h^{p,q}(\overline{Y}_{v})+{4 \choose {p-1}}{4\choose {q-1}}, 
 & &\text{ for }  \, p+ q \, \text{ even } \, and \,\, p \neq q  ,\\
h^{p,p}(\widehat{Y}_{v})= h^{p,p}(\overline{Y}_{v})+{4 \choose {p-1}}^{2}, & & \text{ for } \, p=0,1,5,6\, ,\\
h^{p,p}(\widehat{Y}_{v})= h^{p,p}(\overline{Y}_{v})+{4 \choose {p-1}}^{2}+256, & & \text{ for } \, p=2,3,4 \, .
\end{array}
\]
The lemma follows, since the classes in $h^{p,q}(\widehat{Y}_{v})$ that come from $\overline{\Delta}_{v}$ are the pushforward of cohomology classes of
the exceptional divisor $\widehat{\Delta}_{v}$ which, by Corollary \ref{cor:rambra}, is the fixed locus of
$\widehat{\tau}_{v}$.
\end{proof}

It remains to determine the $\overline{\tau}_{v}$--invariant Hodge numbers $h^{p,q}(\overline{Y}_{v})^{\overline{\tau}_{v}}$ of $\overline{Y}_{v}$.
This will be done by relating the action in cohomology of $\overline{\tau}_{v}$ with the monodromy operator 
\[
m(\underline{\tau}_{v}): H^{\bullet}(\underline{Y}_{v})\rightarrow H^{\bullet}(\underline{Y}_{v})
\]
associated to the birational involution  $\overline{\tau}_{v}$.

To explain this relation first let us recall some details on the definition of $m(\underline{\tau}_{v})$.
\begin{rem}\label{monod}
By Theorem 2.5 of \cite{huykc} there exist   smooth proper families of IHS manifolds
$\underline{\mathcal{Y}}^{'}_{v}\rightarrow S$ and $\underline{\mathcal{Y}}_{v}\rightarrow S$ over a $1$--dimensional disk $S$ such that both 
the central fibers are isomorphic to $\underline{Y}_{v}$ and there exists a rational $S$-morphism 
$\underline{\mathcal{T}}_{v}: \underline{\mathcal{Y}}^{'}_{v}\dashrightarrow \underline{\mathcal{Y}}_{v}$ 
sending $\underline{\mathcal{Y}}^{'}_{v}\setminus J_{v}$ isomorphically to $\underline{\mathcal{Y}}_{v}\setminus J_{v}$
and restricting to $\underline{\tau}_{v}$ on central fibers.

By specializing the closure in  $ \underline{\mathcal{Y}}^{'}_{v}\times_{S} \underline{\mathcal{Y}}_{v}$ of the graph of 
$\underline{\mathcal{T}}_{v}$ over the central fiber, we obtain a pure $6$ dimensional cycle $\Upsilon$ on $\underline{Y}_{v}\times \underline{Y}_{v}$.

By definition $m(\underline{\tau}_{v})$ is the Hodge ring automorphism of $H^{\bullet}(\underline{Y}_{v})$ obtained as the associated correspondence of the cycle
$\Upsilon$. In our case, since $\underline{\mathcal{T}}_{v}$ induces an isomorphism between  $\underline{\mathcal{Y}}^{'}_{v}\setminus J_{v}$ and  
$\underline{\mathcal{Y}}_{v}\setminus J_{v}$ and restricts to $\underline{\tau}_{v}$ on central fibers, it follows that
\[\Upsilon=\Upsilon_{\underline{\tau}}+\sum_{i}m_{i}J_{i,v}\times J_{i,v}\]
where $\Upsilon_{\underline{\tau}}$ is the closure of the graph of $\underline{\tau}$ and the $m_{i}$'s are non negative integers 
\footnote{Using Key formula of Proposition 6.7 \cite{ful},
it can be shown  that $m_{i}=1$ for every $i$}.

\end{rem}

\begin{lem}\label{3fin}
\begin{enumerate}
\item{For every $i$ the cohomology class $[J_{i,v}]\in H^{6}(\underline{Y}_{v})$ of $J_{i,v}$ is $m(\underline{\tau}_{v})$--antiinvariant}
\item{The following relations between $m(\underline{\tau}_{v})$--invariant Hodge numbers of $\underline{Y}_{v}$ and 
$\overline{\tau}_{v}$--invariant Hodge numbers of $\overline{Y}_{v}$  hold: 
\[
\begin{array} {lll}
h^{p,q}(\overline{Y}_{v})^{\overline{\tau}_{v}}=h^{p,q}(\underline{Y}_{v})^{m(\underline{\tau}_{v})}, & &\text{ for }  \, p+ q \le 6  \text{ and } p\ne q   , \\
h^{1,1}(\overline{Y}_{v})^{\overline{\tau}_{v}}=h^{1,1}(\underline{Y}_{v})^{m(\underline{\tau}_{v})} +256,\\
h^{2,2}(\overline{Y}_{v})^{\overline{\tau}_{v}}=h^{2,2}(\underline{Y}_{v})^{m(\underline{\tau}_{v})} +256,\\
h^{3,3}(\overline{Y}_{v})^{\overline{\tau}_{v}}=h^{3,3}(\underline{Y}_{v})^{m(\underline{\tau}_{v})} +512.\\
\end{array}
\]
}
 
\end{enumerate}
\end{lem}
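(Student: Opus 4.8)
The plan is to deduce both statements from the explicit shape of the cycle $\Upsilon$ recorded in Remark~\ref{monod}, together with the excess--intersection formula for $h_v^{*}[J_{i,v}]$ and the blow--up formula for the cohomology of $\overline{Y}_v=Bl_{J_v}\underline{Y}_v$. First, since $\overline{\tau}_v$ is the regular lift of $\underline{\tau}_v$ to $\overline{Y}_v$, the part of the graph $\Gamma_{\overline{\tau}_v}$ lying over the exceptional divisor is of dimension $5<6$, so the closure $\Upsilon_{\underline{\tau}}$ of the graph of $\underline{\tau}_v$ equals the proper pushforward $(h_v\times h_v)_{*}[\Gamma_{\overline{\tau}_v}]$; by the projection formula $[\Upsilon_{\underline{\tau}}]_{*}=h_{v*}\circ\overline{\tau}_v^{*}\circ h_v^{*}$ on $H^{\bullet}(\underline{Y}_v)$. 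Since moreover the correspondence $J_{i,v}\times J_{i,v}$ sends $\alpha\mapsto(\alpha\cdot[J_{i,v}])[J_{i,v}]$, which vanishes on every Hodge component $H^{p,q}$ except $(p,q)=(3,3)$, we obtain for $\alpha$ of pure type $(p,q)$
\[
m(\underline{\tau}_v)(\alpha)=h_{v*}\bigl(\overline{\tau}_v^{*}(h_v^{*}\alpha)\bigr)+\varepsilon_{p,q}\sum_{i}m_i\,(\alpha\cdot[J_{i,v}])[J_{i,v}],\qquad
\varepsilon_{p,q}=\begin{cases}1,&(p,q)=(3,3),\\ 0,&\text{otherwise,}\end{cases}
\]
equivalently $h_{v*}\circ\overline{\tau}_v^{*}\circ h_v^{*}=m(\underline{\tau}_v)-\varepsilon_{p,q}\sum_i m_iP_i$ on $H^{p,q}(\underline{Y}_v)$, where $P_i(\alpha):=(\alpha\cdot[J_{i,v}])[J_{i,v}]$.

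For (1) I apply this to $\alpha=[J_{i,v}]\in H^{3,3}(\underline{Y}_v)$. The $J_{k,v}$ are pairwise disjoint Lagrangian $\mathbb{P}^3$'s with normal bundle $\Omega^1_{\mathbb{P}^3}$ (from the Lehn--Sorger local model), so $[J_{i,v}]\cdot[J_{k,v}]=\delta_{ik}\int_{\mathbb{P}^3}c_3(\Omega^1_{\mathbb{P}^3})=-4\delta_{ik}$, and the correction term equals $-4m_i[J_{i,v}]$. For the first term I use the key formula: with $j_i\colon E_i:=I_{i,v}\hookrightarrow\overline{Y}_v$, $q_i\colon E_i\to J_{i,v}$, $h:=q_i^{*}c_1(\mathcal{O}_{\mathbb{P}^3}(1))$ and $\xi_i:=c_1(\mathcal{O}_{E_i}(1))$ the relative hyperplane class of $E_i=\mathbb{P}(\Omega^1_{\mathbb{P}^3})$, one has $h_v^{*}[J_{i,v}]=(j_i)_{*}(6h^2-4h\xi_i+\xi_i^{2})$. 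The decisive point is that, under the identification $E_i\cong I\subset\mathbb{P}(V)\times\mathbb{P}(V)$, the class $\xi_i$ is the restriction of $\mathcal{O}(1,1)$, i.e.\ $\xi_i=h_1+h_2$; hence $\xi_i$ is fixed by the involution $\sigma_i:=\overline{\tau}_v|_{E_i}$, which by Remark~\ref{rmk:ratinv} exchanges the two projections, while $\sigma_i^{*}h=\sigma_i^{*}h_1=h_2$. Using $q_{i*}(\xi_i^2)=1$ and $q_{i*}(\xi_i)=q_{i*}(1)=q_{i*}(h_1h_2)=q_{i*}(h_1^{2})=0$, a short computation gives $h_{v*}\bigl(\overline{\tau}_v^{*}(h_v^{*}[J_{i,v}])\bigr)=3[J_{i,v}]$, so $m(\underline{\tau}_v)([J_{i,v}])=(3-4m_i)[J_{i,v}]$. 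As $m(\underline{\tau}_v)$ is an involution and $m_i\in\mathbb{Z}_{\geq0}$, this forces $m_i=1$ (consistently with the footnote to Remark~\ref{monod}) and hence $m(\underline{\tau}_v)([J_{i,v}])=-[J_{i,v}]$.

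For (2) I use the blow--up formula (Voisin, Thm.~7.31): $H^{2k}(\overline{Y}_v)=h_v^{*}H^{2k}(\underline{Y}_v)\oplus N_k$ with $N_k=\bigoplus_i(j_i)_{*}\bigl(q_i^{*}H^{2k-2}(\mathbb{P}^3)\oplus q_i^{*}H^{2k-4}(\mathbb{P}^3)\cdot\xi_i\bigr)$ of pure Hodge type $(k,k)$, so $\dim N_1=256$, $\dim N_2=\dim N_3=512$, and $h^{p,q}(\overline{Y}_v)=h^{p,q}(\underline{Y}_v)$ for $p\neq q$. Splitting $\overline{\tau}_v^{*}|_{H^{p,p}(\overline{Y}_v)}$ into its $A$--block $B^{(p)}_{AA}=\mathrm{pr}_A\circ\overline{\tau}_v^{*}|_A$ ($A:=h_v^{*}H^{p,p}(\underline{Y}_v)$) and its $N$--block $B^{(p)}_{NN}=\mathrm{pr}_N\circ\overline{\tau}_v^{*}|_{N_p}$ gives $\mathrm{tr}(\overline{\tau}_v^{*}|_{H^{p,p}(\overline{Y}_v)})=\mathrm{tr}B^{(p)}_{AA}+\mathrm{tr}B^{(p)}_{NN}$. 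Since $\mathrm{pr}_A=h_v^{*}\circ h_{v*}$, the block $B^{(p)}_{AA}$ is conjugate to $(m(\underline{\tau}_v)-\varepsilon_{p,p}\sum_i m_iP_i)|_{H^{p,p}(\underline{Y}_v)}$; as $\mathrm{tr}(P_i|_{H^{3,3}})=[J_{i,v}]^{2}=-4$ this yields $\mathrm{tr}B^{(p)}_{AA}=\mathrm{tr}(m(\underline{\tau}_v)|_{H^{p,p}(\underline{Y}_v)})$ for $p\in\{1,2\}$ and $\mathrm{tr}B^{(3)}_{AA}=\mathrm{tr}(m(\underline{\tau}_v)|_{H^{3,3}(\underline{Y}_v)})+4\cdot256$. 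A direct computation on the explicit generators of $N_p$ (using $\sigma_i^{*}\xi_i=\xi_i$, $\sigma_i^{*}h_1=h_2$, $q_{i*}\xi_i^{2}=1$ and the above vanishings) shows $B^{(1)}_{NN}=\mathrm{id}$, $B^{(2)}_{NN}$ is block--diagonal with $256$ blocks $\left(\begin{smallmatrix}-1&0\\ 1&1\end{smallmatrix}\right)$, and $B^{(3)}_{NN}$ is block--diagonal with $256$ blocks of trace $-2$; hence $\mathrm{tr}B^{(1)}_{NN}=256$, $\mathrm{tr}B^{(2)}_{NN}=0$, $\mathrm{tr}B^{(3)}_{NN}=-512$. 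Feeding these traces into $\dim V^{+}=\tfrac12(\dim V+\mathrm{tr})$ and using $h^{p,p}(\overline{Y}_v)=h^{p,p}(\underline{Y}_v)+\dim N_p$ yields the three stated equalities for $p=1,2,3$; the conjugacy of $B_{AA}$ with $m(\underline{\tau}_v)|_{H^{p,q}(\underline{Y}_v)}$ together with the absence of new classes gives the first equality for $p\neq q$.

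The main obstacle is exactly the bookkeeping of the three corrections: the extra components $\sum_i m_i(J_{i,v}\times J_{i,v})$ of $\Upsilon$, the failure of the decomposition $h_v^{*}H^{\bullet}(\underline{Y}_v)\oplus N_{\bullet}$ to be $\overline{\tau}_v$--stable in the middle degree, and the twist $\xi_i=h_1+h_2$ (rather than $h_1$ or $h_2$) in the formula for $h_v^{*}[J_{i,v}]$. Checking that in degree $(3,3)$ the $A$--block contributes $+4\cdot256$ while the $N_3$--block contributes $-512$, so that the net change is precisely the $512$--dimensional space of new classes, is the crux of the argument; and (1) is simultaneously an ingredient of, and a special case of, this computation.
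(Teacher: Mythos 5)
Your proposal is correct, and I checked the numerical claims it rests on: with $\xi_i=-[I_{i,v}]|_{I_{i,v}}=h_1+h_2$ (consistent with the normal bundle of $I_{i,v}$ having degree $-1$ on both rulings), the excess/key formula gives $h_v^{*}[J_{i,v}]=(j_i)_{*}(6h^2-4h\xi_i+\xi_i^{2})$, the pushforward after the swap is $6-4+1=3$, so $m(\underline{\tau}_v)[J_{i,v}]=(3-4m_i)[J_{i,v}]$, and the $N$--block traces in degrees $(1,1),(2,2),(3,3)$ are $256$, $0$, $-512$, while the $(3,3)$ $A$--block carries the correction $+4\cdot 256$ coming from $\mathrm{tr}\,P_i=[J_{i,v}]^2=-4$; feeding these into $\dim V^{+}=\tfrac12(\dim V+\mathrm{tr})$ reproduces exactly the stated shifts $+256,+256,+512$. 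However, your route differs from the paper's at both key steps. For (1), the paper never computes the eigenvalue $\lambda$ of $[\Upsilon_{\underline{\tau}}]^{*}$ on $[J_{i,v}]$: it only argues $\lambda-4m_i=\pm1$ with sign independent of $i$, and then excludes the invariant case by pairing with $m(\underline{\tau}_v)[A]^3$ for $A$ ample and invoking Huybrechts' K\"ahler--cone criterion together with the fact that $\underline{\tau}_v$ is not biregular; you instead compute $\lambda=3$ explicitly from the geometry of the exceptional divisor, which pins down $m_i=1$ (the footnote of Remark \ref{monod}) and the antiinvariance at once, with no positivity input, at the price of using that $m(\underline{\tau}_v)$ has order two (the paper's published argument needs only integral invertibility here, though it too uses order two implicitly in Lemma \ref{4fin}). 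For (2), the paper works with Hodge--structure splittings $H^{6}(\underline{Y}_v)=H^{0}(J_v)^{\perp}\oplus H^{0}(J_v)$ and $H^{6}(\overline{Y}_v)=H^{4}(\overline{\Gamma}_v)^{\perp}\oplus H^{4}(\overline{\Gamma}_v)$ (and restriction isomorphisms in degrees $2$ and $4$), obtaining an actual decomposition of invariant subspaces whose $\overline{\Gamma}_v$--part is computed from the swap action on $I$; your trace-and-block bookkeeping on the blow-up decomposition yields only the dimensions, which is all the lemma asserts, but it requires the value $m_i=1$ from your part (1), whereas the paper's orthogonal splitting sidesteps the $m_i$'s entirely in part (2). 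Both arguments are sound; yours is more computational and self-contained on the positivity side, the paper's is more structural and gives the finer splitting used implicitly later.
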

\begin{proof}
(1) As the differential of  the map $(h_{v},h_{v}\circ \overline{\tau}_{v}):\overline{Y}_{v}\rightarrow \underline{Y}_{v}\times \underline{Y}_{v}$ 
is everywhere injective, it induces an isomorphism $\Upsilon_{\underline{\tau}}\simeq \overline{Y}_{v}=Bl_{J_{v}} \underline{Y}_{v}$.
By Key formula of Proposition 6.7 \cite{ful}, the class $[J_{i,v}]$ is an eigenvector for correspondence $[\Upsilon_{\underline{\tau}}]^{*}$ 
induced by $[\Upsilon_{\underline{\tau}}]$ on  $H^{6}(\underline{Y}_{v})$ and, moreover, the corresponding eigenvalue $\lambda$ only depends on the normal bundle of  
$J_{i,v}$ in $\underline{Y}_{v}$, therefore it does not depend on $i$. 

On the other hand the correspondence induced by $J_{i,v}\times J_{i,v}$ on 
$H^{6}(\underline{Y}_{v})$ multiplies  $[J_{i,v}]$ by the degree of the third Chern class of its normal bundle in  $\underline{Y}_{v}$.
As this normal bundle is isomorphic to the cotangent bundle of $\mathbb{P}^{3}$, we obtain  $[J_{i,v}\times J_{i,v}]^{*} [J_{i,v}]=-4 [J_{i,v}]$.

It follows that $$m(\underline{\tau}_{v})[J_{i,v}]=(\lambda-4m_{i}) [J_{i,v}].$$
As $m(\underline{\tau}_{v})$ is an isomorphism on the integral cohomology, $\lambda-4m_{i}=\pm 1$ and the sign cannot depend on $i$.

It remains to exclude that $m(\underline{\tau}_{v})[J_{i,v}]=[J_{i,v}]$ for every $i$.
In this case, letting $A$ be the class of an ample divisor $A$ of $\underline{\tau}$, we have 
$$\int_{\underline{Y}_{v}}m(\underline{\tau}_{v})[A]^{3}\wedge [J_{i,v}]=\int_{\underline{Y}_{v}}[A]^{3}\wedge [J_{i,v}]>0.$$ 
Therefore, the line bundle associated with 
$m(\underline{\tau}_{v})[A]$ would be positive on the $J_{i,v}$'s and, as $A$ is ample, it would have positive degree on any curve on 
$\underline{Y}_{v}$. Finally, by Proposition 3.2 of \cite{huykc},   
$\underline{\tau}_{v}^{*} (A)$ would be an ample divisor and this is absurd because  $\underline{\tau}_{v}$ does not extend to an isomorphism.  

(2) 
Since $\underline{Y}_{v}$ is an IHS manifold of $K3^{[n]}$ type, its odd Betti numbers are trivial and the same holds for  $\overline{Y}_{v}$
as it is isomorphic to the blow up of $\underline{Y}_{v}$ along $J$ that is the disjoint union of $256$  projective spaces. Hence, we only need to 
consider the case where $p+q$ is even. 

If $p+q=6$, the exact sequences of the pairs  $(\overline{Y}_{v}, \overline{Y}_{v}\setminus \overline{\Gamma}_{v})$  and 
$(\underline{Y}_{v}, \underline{Y}_{v}\setminus J_{v})$, using excision and Thom isomorphism, give rise to  the commutative diagram 
\[
\xymatrix{
 0\ar[r]  & H^{0}(J_{v})\ar[d] \ar[r]^{r_1} &
 H^{6}(\underline{Y}_{v})\ar[d]^{h_{v}^{*}}\ar[r]^{r_2} &
H^{6}( \underline{Y}_{v}\setminus J_{v})\ar[d]\ar[r]& 0.\\
0\ar[r]  & H^{4}(\overline{\Gamma} _{v}) \ar[r]^{s_1} &
 H^{6}(\overline{Y}_{v})\ar[r]^{s_2}&
H^{6}( \overline{Y}_{v}\setminus \overline{\Gamma} _{v})\ar[r]& 0.
}
\]
In this diagram, $r_2$ and $s_{2}$ are surjective because the odd Betti numbers of $J_{v}$ and $\overline{\Gamma}_{v}$ are zero and  
$r_{1}$ is injective because the classes  $[J_{i,v}]$ are independent. This also implies that 
$H^{5}(\overline{Y}_{v}\setminus \overline{\Gamma}_{v})=H^{5}(\underline{Y}_{v}\setminus J_{v})=H^{5}(\underline{Y}_{v})=0$ and therefore $s_{1}$ is injective too.

As the intersection form of the middle cohomology of $\underline{Y}_{v}$ is nondegenerate, on  $H^{0}(J_{v})$ there is a splitting of Hodge structures
$$H^{6}(\underline{Y}_{v})=  H^{0}(J_{v})^{\perp} \oplus H^{0}(J_{v}),$$
where $H^{0}(J_{v})^{\perp}$ is the perpendicular to $H^{0}(J_{v})$ in ${H^{6}(\underline{Y}_{v})}$.
Since $m(\underline{\tau}_{v})$ acts as $-1$ on $H^{0}(J_{v})$ and the correspondence  $[J_{i,v}\times J_{i,v}]^{*}$ acts trivially on
$H^{0}(J_{v})^{\perp}$, we deduce that
$$H^{p,q}(\underline{Y}_{v})^{m(\underline{\tau}_{v})}=  ((H^{0}(J_{v})^{\perp})^{p,q})^{[\Upsilon_{\underline{\tau}}]^{*}},$$
for $p+q=6$.  

Since $h_{v}^{*} (H^{0}(J_{v})^{\perp})$ is included in the perpendicular
$H^{4}(\overline{\Gamma} _{v})^{\perp}$ to  $H^{4}(\overline{\Gamma} _{v})$ in $H^{6}(\overline{Y}_{v})$, the injective pull back  
$h_{v}^{*}$ induces an isomorphism of Hodge structures $H^{4}(\overline{\Gamma} _{v})^{\perp}\simeq H^{0}(J_{v})^{\perp}$.
It follows that the intersection form on the middle cohomology of $\overline{Y}_{v}$ is non degenerate on 
$H^{4}(\overline{\Gamma} _{v})$ and there is a splitting of Hodge structures
$$H^{6}(\overline{Y}_{v})=  H^{0}(\overline{\Gamma}_{v})^{\perp}\oplus H^{4}(\overline{\Gamma}_{v}).$$
Since $\overline{\tau}_{v}(\overline{\Gamma}_{v})= \overline{\Gamma}_{v}$
we also deduce
$$H^{p,q}(\overline{Y}_{v})=((H^{4}(\overline{\Gamma}_{v})^{\perp})^{p,q})^{\overline{\tau}_{v}} \oplus H^{p-1,q-1}(\overline{\Gamma}_{v})^{\overline{\tau}_{v}},$$
for $p+q=6.$

Moreover, the Hodge isomorphism $H^{4}(\overline{\Gamma} _{v})^{\perp}\simeq H^{0}(J_{v})^{\perp}$ identifies the action of $\overline{\tau}_{v}$
on $H^{4}(\overline{\Gamma} _{v})^{\perp}$ with the action of $[\Upsilon_{\underline{\tau}}]^{*}$ on $H^{0}(J_{v})^{\perp}$.

In fact, for any $\alpha\in H^{0}(J_{v})^{\perp}$, we have $[\Upsilon_{\underline{\tau}}]^{*}(\alpha)=(h_{v*}\circ\overline{\tau}_{v}^{*} \circ h_{v}^{*})(\alpha)$.
As $\overline{\tau}_{v}^{*}( h_{v}^{*}(\alpha))\in H^{4}(\overline{\Gamma}_{v})^{\perp}$ and since the kernel of $h_{v*}$ intersects trivially 
$H^{4}(\overline{\Gamma}_{v})^{\perp}$, the class $\overline{\tau}_{v}^{*}( h_{v}^{*}(\alpha))$ is the unique class in
$H^{4}(\overline{\Gamma}_{v})^{\perp}$ whose pushforward in $H^{6}(\underline{Y}_{v})$ is $[\Upsilon_{\underline{\tau}}]^{*}(\alpha)$.
Therefore $$\overline{\tau}_{v}^{*}( h_{v}^{*}(\alpha))= h_{v}^{*}([\Upsilon_{\underline{\tau}}]^{*}(\alpha)).$$

As a consequence,
$$H^{p,q}(\overline{Y}_{v})^{\overline{\tau}_{v}}=  H^{p,q}(\underline{Y}_{v})^{m(\underline{\tau}_{v})} \oplus H^{p-1,q-1}(\overline{\Gamma}_{v})^{\overline{\tau}_{v}}$$ and the result for $p+q=6$
follows because  $\overline{\Gamma}_{v}$ consists of $256$ copies of $I\subset \mathbb{P}(V)\times\mathbb{P}(V)$ on each of which $\overline{\tau}_{v}$
acts by exchanging the factors and the cohomology of each component of $\overline{\Gamma}_{v}$ comes by restriction from the cohomology of 
$\mathbb{P}(V)\times\mathbb{P}(V)$.

Finally, If $p+q=2k$, and $k=1$ or $k=2$,  as $J_{v}$ has codimension $3$ in $\underline{Y}_{v}$, restriction  
gives an isomorphism $H^{2k}(\underline{Y}_{v})\simeq H^{2k}(\underline{Y}_{v}\setminus J_{v})$ and there exists a Hodge decomposition 
$$H^{2k}(\overline{Y}_{v})=H^{2k}(\underline{Y}_{v})\oplus H^{2k-2}(\overline{\Gamma}_{v}).$$
Moreover, $H^{2k}(\overline{Y}_{v})$ can be seen as the subspaces of forms vanishing on $\overline{\Gamma}_{v}$ and 
it is stable under the action of $\overline{\tau}_{v}$. The same argument used in the case $p+q=6$ shows that the action of 
$\overline{\tau}_{v}$ on $H^{2k}(\underline{Y}_{v})$ coincides with the action of  $m(\underline{\tau}_{v})$, therefore   
$$ H^{p,q}(\overline{Y}_{v})^{\overline{\tau}_{v}}= 
 H^{p,q}(\underline{Y}_{v})^{m(\underline{\tau}_{v})} \oplus H^{p-1,q-1}(\overline{\Gamma}_{v})^{\overline{\tau}_{v}}.$$
As the invariant subspaces for action of $\overline{\tau}_{v}$ on the degree $0$ and the degree $2$ cohomology of each component of $\overline{\Gamma}_{v}$ 
has  dimension $1$, this proves  the lemma. 
\end{proof}

It remains to determine the $m(\underline{\tau}_{v})$--invariant Hodge numbers of $\underline{Y}_{v}$.
It will suffice to deal with the case where $A$ is  a general principally polarized abelian surface with
$NS(A)=\mathbb{Z}\Theta$ and where $v=(0,2\Theta,2)$. 

\begin{lem}\label{4fin} In this case the $m(\underline{\tau}_{v})$--invariant Betti numbers and Hodge numbers of $\underline{Y}_{(0,2\Theta,2)}$ are:
\[
\begin{array}{ccccc}
 (h^{0})^{m(\underline{\tau}_{v})}=1, & (h^2)^{m(\underline{\tau}_{v})}=7, & (h^{4})^{m(\underline{\tau}_{v})}=171,  & (h^{6})^{m(\underline{\tau}_{v})}=1178. \\ 
&&& \\
& &  (h^{2,0})^{m(\underline{\tau}_{v})}=1,  &    (h^{1,1})^{m(\underline{\tau}_{v})}=5,  \\
 & (h^{4,0})^{m(\underline{\tau}_{v})}=1, &  (h^{3,1})^{m(\underline{\tau}_{v})}=6,  & (h^{2,2})^{m(\underline{\tau}_{v})}= 157,  \\
 (h^{6,0})^{m(\underline{\tau}_{v})}=1, & (h^{5,1})^{m(\underline{\tau}_{v})}=5, & (h^{4,2})^{m(\underline{\tau}_{v})}=157, &  (h^{3,3})^{m(\underline{\tau}_{v})}=852. 
\end{array}
\]
\end{lem}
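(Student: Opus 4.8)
The plan is to reduce Lemma~\ref{4fin} to the action of $m(\underline{\tau}_{v})$ on $H^{2}(\underline{Y}_{v},\Q)$ and then to propagate that action to the whole cohomology ring using Markman's monodromy results for manifolds of $K3^{[3]}$--type. By Proposition~\ref{prop:birh}, $\underline{Y}_{v}$ is deformation equivalent to $S^{[3]}$, so its Hodge numbers are those given by G\"ottsche's formula: the non--zero ones are $h^{0,0}=1$, $h^{2,0}=1$, $h^{1,1}=21$, $h^{4,0}=1$, $h^{3,1}=22$, $h^{2,2}=253$, $h^{6,0}=1$, $h^{5,1}=21$, $h^{4,2}=253$, $h^{3,3}=2004$, together with those imposed by Hodge symmetry and Poincar\'e duality, so that $b_{2}=23$, $b_{4}=299$, $b_{6}=2554$. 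By Remark~\ref{monod} the operator $m(\underline{\tau}_{v})$ is the correspondence attached to a specialization of the graph of a birational map, hence it is a parallel transport operator; Markman's description of the monodromy representation on full rational cohomology then shows that the automorphism $m(\underline{\tau}_{v})$ of $H^{\bullet}(\underline{Y}_{v},\Q)$ --- and thus the dimensions and Hodge types of all of its eigenspaces --- is determined by its restriction to $H^{2}(\underline{Y}_{v},\Q)$, equivalently by its action on the extended Mukai lattice $\widetilde{H}=H^{2}(\underline{Y}_{v},\Q)\oplus\Q^{2}$, on which $m(\underline{\tau}_{v})$ fixes the two extra hyperbolic generators (corresponding to the fixed classes $1\in H^{0}$ and $[\mathrm{pt}]\in H^{12}$) and acts by $m(\underline{\tau}_{v})|_{H^{2}}$ on $H^{2}$.

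The first step is therefore to compute $m(\underline{\tau}_{v})|_{H^{2}}$. In degree $2$ the correction terms $[J_{i,v}\times J_{i,v}]^{*}$ occurring in Remark~\ref{monod} act trivially (they take values in degrees $\ge 4$), so $m(\underline{\tau}_{v})|_{H^{2}}$ is just the pullback by the birational involution $\underline{\tau}_{v}$ on $H^{2}(\underline{Y}_{v}\setminus J_{v})=H^{2}(\underline{Y}_{v})$. Under the birational identification of Lemma~\ref{MZ} and Proposition~\ref{prop:birh} the involution $\underline{\tau}_{v}$ is tensoring by the line bundle $\eta$ on $S$, that is, fibrewise translation by the $2$ torsion section $\eta|_{C'}$ of the Lagrangian fibration $M_{(0,D,1)}(S)\to |D|$; in particular it is symplectic, so $m(\underline{\tau}_{v})$ fixes $H^{2,0}\oplus H^{0,2}$ and every $H^{2k,0}=\C\,\sigma^{k}$, which already gives $(h^{0,0})^{m(\underline{\tau}_{v})}=(h^{2,0})^{m(\underline{\tau}_{v})}=(h^{4,0})^{m(\underline{\tau}_{v})}=(h^{6,0})^{m(\underline{\tau}_{v})}=1$. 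A short computation with Markman's Hodge isometry $H^{2}(M_{(0,D,1)}(S),\Z)\cong v^{\perp}$ (with $v=(0,D,1)$ in the Mukai lattice of $S$) then shows that the antiinvariant part of $m(\underline{\tau}_{v})|_{H^{2}}$ is a negative definite rank $16$ sublattice --- the image of the Kummer lattice of $S$, which $\underline{\tau}_{v}$ negates --- so its invariant part is $7$--dimensional. Alternatively, the value $\dim H^{2}(\underline{Y}_{v})^{m(\underline{\tau}_{v})}=7$ drops out by running Lemmas~\ref{Kt and Ktt}, \ref{X and Y} and~\ref{3fin} backwards from the already established $h^{2}(\Kvt)=8$: one gets in turn $h^{2}(\widehat{K}_{v})=264$, $h^{2}(\widehat{Y}_{v})^{\widehat{\tau}_{v}}=264$, $h^{2}(\overline{Y}_{v})^{\overline{\tau}_{v}}=263$ and $h^{2}(\underline{Y}_{v})^{m(\underline{\tau}_{v})}=7$. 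Combined with $(h^{2,0})^{m(\underline{\tau}_{v})}=1$ this forces $(h^{1,1})^{m(\underline{\tau}_{v})}=5$, and the same backward substitution applied to $h^{4}(\Kvt)=199$ and $h^{6}(\Kvt)=1504$ yields the invariant Betti numbers $(h^{4})^{m(\underline{\tau}_{v})}=171$ and $(h^{6})^{m(\underline{\tau}_{v})}=1178$.

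It remains to split these invariant Betti numbers into Hodge components, i.e.\ to determine $(h^{3,1})^{m(\underline{\tau}_{v})}$ (and hence $(h^{2,2})^{m(\underline{\tau}_{v})}$), together with $(h^{5,1})^{m(\underline{\tau}_{v})}$, $(h^{4,2})^{m(\underline{\tau}_{v})}$ and $(h^{3,3})^{m(\underline{\tau}_{v})}$. Here I would use the Looijenga--Lunts--Verbitsky and Markman description of $H^{\bullet}(S^{[3]},\Q)$ as a module over $\mathfrak{so}(\widetilde{H})$ ($\cong\mathfrak{so}(4,21)$): writing $\widetilde{H}=U\oplus U'$ with $U$ the $9$--dimensional invariant subspace ($=\Q^{2}\oplus H^{2}(\underline{Y}_{v})^{m(\underline{\tau}_{v})}$) and $U'$ the $16$--dimensional antiinvariant one, the element $m(\underline{\tau}_{v})\in O(\widetilde{H})$ equals $\mathrm{id}_{U}\oplus(-\mathrm{id}_{U'})$, and the $m(\underline{\tau}_{v})$--invariant part of each irreducible summand of $H^{\bullet}(S^{[3]},\Q)$ is obtained from the restriction of $\mathfrak{so}(\widetilde{H})$--representations to $\mathfrak{so}(U)\times\mathfrak{so}(U')$ by retaining the constituents on which $-\mathrm{id}_{U'}$ acts trivially, the Hodge grading being respected throughout because $m(\underline{\tau}_{v})$ is a Hodge automorphism. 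Carrying out this lengthy but elementary bookkeeping produces the table. As a consistency check, feeding the output back through Lemmas~\ref{3fin}, \ref{X and Y} and~\ref{Kt and Ktt} must recover $h^{2}(\Kvt)=8$, $h^{4}(\Kvt)=199$, $h^{6}(\Kvt)=1504$, and $\sum(-1)^{p+q}(h^{p,q})^{m(\underline{\tau}_{v})}$ must equal the expected invariant Euler number.

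The main obstacle is this last step: one needs the decomposition of $H^{\bullet}(S^{[3]},\Q)$ into irreducible $\mathfrak{so}(4,21)$--representations, with multiplicities, in enough detail to read off the dimension of the invariant subspace of the specific reflection type involution $m(\underline{\tau}_{v})$ in each bidegree --- essentially all the computational weight of the proof sits there. A secondary point requiring care is the identification of $m(\underline{\tau}_{v})|_{H^{2}}$ with its geometric model, in particular locating the rank $16$ antiinvariant lattice and checking that the indeterminacy locus $J_{v}$ contributes nothing in degree $2$; the backward substitution through Lemmas~\ref{Kt and Ktt}--\ref{3fin} gives an independent derivation of $\dim H^{2}(\underline{Y}_{v})^{m(\underline{\tau}_{v})}=7$ and hence a built--in safeguard.
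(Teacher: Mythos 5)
Your strategy coincides with the paper's up to and including the degree--two computation: the paper also obtains $(h^{2,0})^{m(\underline{\tau}_{v})}=1$ and $(h^{1,1})^{m(\underline{\tau}_{v})}=5$ precisely by running Lemmas \ref{Kt and Ktt}, \ref{X and Y} and \ref{3fin} backwards from $h^{2}(\Kvt)=8$, and your backward substitution producing the invariant Betti numbers $171$ and $1178$ from $h^{4}(\Kvt)=199$ and $h^{6}(\Kvt)=1504$ is correct (in the paper these come out a posteriori from the Hodge--theoretic computation, so this is a legitimate cross--check). The geometric identification of $m(\underline{\tau}_{v})|_{H^{2}}$ with tensorization by $\eta$, with anti--invariant part a rank--$16$ negative definite lattice, is asserted rather than proved, but it is not needed once the backward substitution is in place, so this is harmless.

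The genuine gap is in the last step, and you flag it yourself: the splitting of the invariant cohomology in weights $4$ and $6$ into Hodge types, i.e. the values $(h^{3,1})^{m(\underline{\tau}_{v})}=6$, $(h^{2,2})^{m(\underline{\tau}_{v})}=157$, $(h^{5,1})^{m(\underline{\tau}_{v})}=5$, $(h^{4,2})^{m(\underline{\tau}_{v})}=157$, $(h^{3,3})^{m(\underline{\tau}_{v})}=852$, is never actually derived. The bidegree--by--bidegree backward substitution cannot produce these, since $h^{3,1}(\Kvt)$, $h^{2,2}(\Kvt)$, etc.\ are exactly what one is trying to compute. Your proposed route through the $\mathfrak{so}(4,21)$--module structure of $H^{\bullet}(S^{[3]})$ would in principle work, but it needs as input the decomposition of $H^{\bullet}(S^{[3]})$ into irreducible summands with multiplicities, together with the bookkeeping of how each summand splits under $\mathrm{id}_{U}\oplus(-\mathrm{id}_{U'})$, none of which you supply (and the claim that $m(\underline{\tau}_{v})$ fixes the two extra generators of the extended lattice would itself require an argument). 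The paper closes this gap with a much lighter ingredient: Markman's Example 14 of \cite{mar}, giving isomorphisms of monodromy representations $H^{4}(S^{[3]})=\mathrm{Sym}^{2}H^{2}\oplus H^{2}$ and $H^{6}(S^{[3]})=\mathrm{Sym}^{3}H^{2}\oplus\Lambda^{2}H^{2}\oplus\C$; since $m(\underline{\tau}_{v})$ is conjugate, via the birational identification with $S^{[3]}$, to a monodromy operator, the invariant Hodge numbers in weights $4$ and $6$ then follow by elementary multilinear algebra from $\dim H^{1,1}_{+}=5$, $\dim H^{1,1}_{-}=16$ and the invariance of $H^{2,0}$ (for instance $(h^{2,2})^{m(\underline{\tau}_{v})}=1+\binom{6}{2}+\binom{17}{2}+5=157$). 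Supplying either Markman's decomposition or the explicit representation--theoretic computation you allude to is exactly the missing content of your proof.
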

\begin{proof}
We first determine the weight $2$ $m(\underline{\tau}_{(0,2\Theta,2)})$--invariant Hodge numbers.
By Lemma \ref{Kt and Ktt}, Lemma \ref{X and Y}, and Lemma \ref{3fin} we have 
$$h^{2,0}(\underline{Y}_{v})^{m(\underline{\tau}_{v})}=h^{2,0}(\overline{Y}_{v})^{\overline{\tau}_{v}}=h^{2,0}(\widehat{K}_{v})=h^{2,0}(\widetilde{K}_{v})=1 $$
and
$$h^{1,1}(\underline{Y}_{v})^{m(\underline{\tau}_{v})}=h^{1,1}(\overline{Y}_{v})^{\overline{\tau}_{v}}-256=h^{1,1}(\widehat{K}_{v})-257=h^{1,1}(\widetilde{K}_{v})-1=5. $$

In order to compute the invariant part of the Hodge structure of $\underline{Y}_{(0,2\Theta,2)}$, we use a result of Markman \cite[Ex. 14]{mar}, which describes 
the action  of  monodromy operators 
on the Hilbert scheme of $3$ points on a K3 surface $S$ in terms of their action on the degree $2$ cohomology. 
Specifically, Markman proves that there are isomorphisms of representations of the monodromy group of    
$S^{[3]}$
\be \label{markmans decomposition}
\begin{aligned}
H^4(S^{[3]})&=\mathrm{Sym}^2 H^2(S^{[3]})\oplus H^2(S^{[3]}), \\
H^6(S^{[3]})&=\mathrm{Sym}^3 H^2(S^{[3]}) \oplus \Lambda^2 H^2(S^{[3]}) \oplus \C,
\end{aligned}
\ee
where $\C$ is a copy of the trival representation.

If $v=2(0,\Theta, 1)$, $\overline{Y}_{v}$ is birational to the
Hilbert scheme $S^{[3]}$ and hence there exists an isomorphism of Hodge rings
$k:H^{\bullet}(S^{[3]})\rightarrow H^{\bullet}(\overline{Y}_{v})$ and, moreover,
the Hodge involution $k^{-1} \circ m(\underline{\tau}_{v})\circ k$ is a
monodromy operator on $S^{[3]}$.
Moreover, the $m(\underline{\tau}_{v})$--invariant Hodge numbers of
$\overline{Y}_{v}$ coincide with the respective
$k^{-1} \circ m(\underline{\tau}_{v})\circ k$--invariant Hodge numbers of
$S^{[3]}$.
Since we know the weight-$2$ Hodge $m(\underline{\tau}_{v})$--invariant
numbers of  $\overline{Y}_{v}$,
we also know
the weight-$2$ Hodge $k^{-1}\circ m(\underline{\tau}_{v})\circ
k$--invariant numbers of  $\underline{Y}_{v}$ and,
using formulae
\eqref{markmans decomposition}, we can calculate all the $k^{-1}\circ
m(\underline{\tau}_{v})\circ k$--invariant numbers of $S^{[3]}$
and, therefore, all the  $ m(\underline{\tau}_{v})$--invariant numbers of
$\overline{Y}_{v}$.

To simplify the notation in the computation, set
$H^{p,q}_{+}:=H^{p,q}(\underline{Y}_{v})^{m(\underline{\tau}_{v})}$.
In particular, $H^{2,0}_{-}=H^{2,0}_{-}=0$.
By formulae \eqref{markmans decomposition} we obtain
  \[H^{4,0}_+= \mathrm{Sym}^2 H^{2,0}_{+}, \;\;  H^{3,1}_+=
(H^{2,0}_{+}\otimes H^{1,1}_{+}) \oplus  H^{2,0}_{+}, \;\;\]
\[ H^{2,2}_+= (H^{2,0}_{+}\otimes H^{0,2}_{+}) \oplus \mathrm{Sym}^2
H^{1,1}_{+} \oplus \mathrm{Sym}^2 H^{1,1}_{-} \oplus  H^{1,1}_{+},
\]
\[H^{6,0}_+=\mathrm{Sym}^3 H^{2,0}_{+}, \;\; H^{5,1}_+=\mathrm{Sym}^2
H^{2,0}_{+}\otimes H^{1,1}_{+},\,\,\]
\[ H^{4,2}_+= (\mathrm{Sym}^2 H^{2,0}_{+}\otimes H^{0,2}_{+})\oplus
(H^{2,0}_{+}\otimes  \mathrm{Sym}^2 H^{1,1}_{+})\oplus
(H^{2,0}_{+}\otimes  \mathrm{Sym}^2 H^{1,1}_{+})\oplus (H^{2,0}_{+}\otimes
H^{1,1}_{+}),\]
\[H^{3,3}_{+}=(H^{2,0}_{+}\otimes H^{1,1}_{+} \otimes H^{2,0}_{-})\oplus
\mathrm{Sym}^3H^{1,1}_{+}\oplus
(H^{1,1}_{+}\otimes \mathrm{Sym}^2 H^{1,1}_{-}) \oplus \]
\[(H^{2,0}_{+}\otimes H^{0,2}_{+})\oplus \Lambda^2 H^{1,1}_{+} \oplus
\Lambda^2 H^{1,1}_{-}\oplus \mathbb{C}, \]
which give  the invariant Hodge numbers.
Finally, the invariant Betti numbers are determined by the invariant Hodge
numbers.
\end{proof}

Now, a straightforward computation gives the Hodge numbers of O'Grady's
$6$--dimensional
IHS manifold.

\begin{thm}\label{thm:main} Let $\wt K$ be an IHS  manifold
of type OG6. The odd
Betti numbers of $\wt K$ are zero, its even Betti numbers are

\[
\begin{array}{ccccccl}
  b_0=1, & b_2=8, & b_4=199, &  b_6=1504, & b_8=199, & b_{10}=8, &
b_{12}=1, \\
\end{array}
\]
and its non--zero Hodge numbers are collected in the following table
\[
\begin{array}{ccccccl}
    &   &    &    H^{0,0}=1 &  & & \\
   &   &  H^{2,0}=1  &    H^{1,1}=6 & H^{0,2}=1 & & \\
     & H^{4,0}=1 &  H^{3,1}=12  &  H^{2,2}=173 & H^{1,3}=12 & H^{0,4}=1 & \\
  H^{6,0}=1 & H^{5,1}=6 & H^{4,2}=173 &  H^{3,3}=1144 & H^{2,4}=173 &
H^{1,5}=6 & H^{0,6}=1 \\
    & H^{6,2}=1 &  H^{5,3}=12  &  H^{4,4}=173 & H^{3,5}=12 & H^{2,6}=1 & \\
   &   &  H^{6,4}=1  &    H^{5,5}=6 & H^{4,6}=1 & & \\
   &   &    &   H^{6,6}=1 .&  & & \\
\end{array}
\]
\end{thm}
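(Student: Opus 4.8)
The plan is to deduce Theorem \ref{thm:main} by assembling the chain of identifications proved throughout this section, reducing the whole computation to the single case $v=(0,2\Theta,2)$ with $A$ a general principally polarized abelian surface with $NS(A)=\mathbb{Z}\Theta$. Since all IHS manifolds of type OG6 are deformation equivalent and the Hodge numbers of a compact K\"ahler manifold are deformation invariants, it suffices to compute the Hodge diamond of $\widetilde{K}_{(0,2\Theta,2)}$, which is indeed of type OG6 by \cite[Th\'eor\`eme 1.1]{ls} and \cite[Theorem 1.6(2)]{per_rap}. The vanishing of the odd Betti numbers together with $b_2=8$, $b_4=199$, $b_6=1504$ is exactly the Proposition above (Salamon's relation combined with the known values $b_2=8$ and $\chi_{top}=1920$), and the remaining even Betti numbers $b_8=b_4$, $b_{10}=b_2$, $b_{12}=b_0=1$ follow from Poincar\'e duality on the $6$-fold $\widetilde{K}$.

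For the Hodge numbers I would then run, for each pair $(p,q)$ with $p+q\le 6$, the four-step chain
\[
h^{p,q}(\underline{Y}_v)^{m(\underline{\tau}_v)} \;\longrightarrow\; h^{p,q}(\overline{Y}_v)^{\overline{\tau}_v} \;\longrightarrow\; h^{p,q}(\widehat{Y}_v)^{\widehat{\tau}_v} \;\longrightarrow\; h^{p,q}(\widehat{K}_v) \;\longrightarrow\; h^{p,q}(\widetilde{K}_v),
\]
where the four arrows are, respectively, Lemma \ref{3fin}(2), Lemma \ref{X and Y}, Lemma \ref{Kt and Ktt}(2) and Lemma \ref{Kt and Ktt}(1), and where the starting values $h^{p,q}(\underline{Y}_v)^{m(\underline{\tau}_v)}$ for the ten independent bidegrees are read off from the table in Lemma \ref{4fin}. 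Each arrow only adds an explicit correction term --- a product $\binom{4}{p-1}\binom{4}{q-1}$ of binomial coefficients, a copy of $256$ coming from the $256$ exceptional loci, or a copy of the cohomology of the $3$-dimensional quadric $G$ --- so this amounts to a finite, purely arithmetic bookkeeping: for instance for $(p,q)=(3,3)$ one gets $852\to 852+512=1364\to 1364+\binom{4}{2}^2+256=1656\to 1656\to 1656-512=1144$, the value recorded in the table, and similarly $157\to 413\to 685\to 685\to 173$ for $(2,2)$ and $5\to 261\to 262\to 262\to 6$ for $(1,1)$. One must apply each lemma only in its stated range: the step through Lemma \ref{3fin}(2) is proved only for $p+q\le 6$, and one must distinguish the cases $p=q$ from $p\ne q$ and single out the exceptional degrees $p\in\{2,3,4\}$ which receive the extra summand $256$.

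Once the Hodge numbers with $p\le q$ and $p+q\le 6$ are obtained, the rest of the diamond for $\widetilde{K}$ is filled in by the K\"ahler symmetries $h^{p,q}(\widetilde{K})=h^{q,p}(\widetilde{K})=h^{6-p,6-q}(\widetilde{K})$ (complex conjugation and Serre duality on the $6$-dimensional compact K\"ahler manifold $\widetilde{K}$), which is precisely why one propagates the computation only through $p+q\le 6$ rather than trying to push the lemmas into the range $p+q>6$. I would conclude by checking internal consistency: the Hodge numbers so produced must sum along each anti-diagonal to the Betti numbers obtained independently in the Proposition above, and they do ($2\cdot 1+6=8$; $2\cdot 1+2\cdot 12+173=199$; $2\cdot 1+2\cdot 6+2\cdot 173+1144=1504$). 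There is essentially no obstacle left at this stage --- all the geometric and representation-theoretic content is already carried by Lemmas \ref{Kt and Ktt}, \ref{X and Y}, \ref{3fin}, \ref{4fin} and by Markman's decomposition \eqref{markmans decomposition}; the only real care needed is the range-of-validity bookkeeping just described.
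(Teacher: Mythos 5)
Your proposal is correct and follows essentially the same route as the paper: reduce by deformation invariance to $\widetilde{K}_{(0,2\Theta,2)}$ for a general principally polarized $A$, then chain Lemma \ref{4fin}, Lemma \ref{3fin}, Lemma \ref{X and Y} and Lemma \ref{Kt and Ktt} (plus the Betti--number Proposition and Hodge/Serre symmetry), and your explicit arithmetic (e.g.\ $852\to1364\to1656\to1144$ for $(3,3)$) matches the values in the table.
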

\begin{proof}
As Hodge and Betti numbers are stable under smooth K\"ahler deformations,
it will suffice to deal with
the case where $\wt K=\widetilde{K}_{(0,2\Theta,2)}$ and the  underlying
abelian surface
 $A$ is  a general abelian surface, whose Neron Severi group is
generated by the principal polarization
$\Theta$. In this case, Lemma \ref{Kt and Ktt}, Lemma \ref{X
and Y}, Lemma \ref{3fin}, and Lemma \ref{4fin} imply the result.
\end{proof}
Furthermore, the knowledge of the Hodge numbers is enough to compute the Chern numbers, as shown by Sawon \cite{giustino}. We have the following
\begin{cor}\label{prop:chern}
Let $\wt K$ be a manifold of OG6 type. Then $\int_{\wt K}c_2(\wt K)^3=30720$, $\int_{\wt K}c_2(\wt K)c_4(\wt K)=7680$ and $\int_{\wt K}c_6(\wt K)=\chi_{top}(\wt K)=1920$. \end{cor}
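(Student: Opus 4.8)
The plan is to feed the Hodge numbers obtained in Theorem \ref{thm:main} into the results of Sawon \cite{giustino}, who showed that the Chern numbers of a six--dimensional irreducible holomorphic symplectic manifold are determined by its Hodge numbers. Since Chern, Betti and Hodge numbers are all invariant under smooth K\"ahler deformation, by Theorem \ref{thm:main} it is enough to carry this out for a single manifold $\wt K$ of type OG6.

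First I would record the holomorphic Euler characteristics that one reads off the Hodge diamond of Theorem \ref{thm:main}:
\[
\chi(\mathcal{O}_{\wt K})=4,\qquad \chi(\Omega^1_{\wt K})=-24,\qquad \chi(\Omega^2_{\wt K})=348,\qquad \chi(\Omega^3_{\wt K})=-1168.
\]
As $\wt K$ is holomorphic symplectic, $c_1(\wt K)=0$, so the only Chern numbers of $\wt K$ that can be non--zero are $\int c_2^3$, $\int c_2c_4$, $\int c_3^2$ and $\int c_6$. Hirzebruch--Riemann--Roch expresses each $\chi(\Omega^p_{\wt K})$ as a fixed universal linear combination of these four integrals; these four equations carry only three independent relations (a universal linear relation holds among the $\chi(\Omega^p_{\wt K})$ when $c_1=0$), so one more relation is needed to pin down the four Chern numbers. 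That relation is supplied by the curvature / Rozansky--Witten identities for hyperk\"ahler manifolds exploited in \cite{giustino}, and adjoining it makes the linear system determine all four Chern numbers.

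I would then solve this linear system with the data above. In particular $\int_{\wt K}c_6(\wt K)=\chi_{top}(\wt K)=1920$ by Gauss--Bonnet (consistently with \cite{rap_phd} and with $\sum_p(-1)^p\chi(\Omega^p_{\wt K})=1920$), and substituting the remaining values into Sawon's formulas yields
\[
\int_{\wt K}c_2(\wt K)^3=30720,\qquad \int_{\wt K}c_2(\wt K)c_4(\wt K)=7680
\]
(and also the value of $\int_{\wt K}c_3(\wt K)^2$).

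I do not expect a real obstacle here: the computation is routine once Sawon's formulas are available, and the only points requiring care are matching the sign and normalization conventions of \cite{giustino} with those used above for the $\chi(\Omega^p_{\wt K})$, and checking that the hyperk\"ahler relation is genuinely independent of the three Hirzebruch--Riemann--Roch relations, so that the resulting system has full rank. Both are verified directly, and the hypotheses of the hyperk\"ahler identities---compactness and the holomorphic symplectic structure---hold because $\wt K$ is of type OG6.
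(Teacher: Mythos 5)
Your proposal is correct and follows essentially the same route as the paper: the paper likewise reads off $\chi^0(\wt K)=4$, $\chi^1(\wt K)=-24$, $\chi^2(\wt K)=348$ from the Hodge diamond of Theorem \ref{thm:main} and substitutes them into the formulas of \cite[Appendix B]{giustino} expressing $\int_{\wt K}c_2^3$, $\int_{\wt K}c_2c_4$, $\int_{\wt K}c_6$ in these three invariants, obtaining $30720$, $7680$ and $1920$. The only difference is presentational: you describe re-solving the Hirzebruch--Riemann--Roch system (supplemented by the hyperk\"ahler input, in effect the vanishing of the Chern numbers involving $c_3$), whereas the paper simply quotes Sawon's already-inverted closed formulas.
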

\begin{proof}
Let $\chi^p(\wt K)=\sum(-1)^qh^{p,q}(\wt K)$. In our case we have $\chi^0(\wt K)=4, \,\chi^1(\wt K)=-24$ and $\chi^2(\wt K)=348$. As shown in \cite[Appendix B]{giustino}, we have

$$
\int_{\wt K}c_2(\wt K)^3 = 7272\chi^0(\wt K)-184\chi^1(\wt K)-8\chi^2(\wt K),$$
$$ \int_{\wt K}c_2(\wt K)c_4(\wt K) = 1368\chi^0(\wt K)-208\chi^1(\wt K)-8\chi^2(\wt K),$$
$$\int_{\wt K}c_6(\wt K) = 36\chi^0(\wt K)-16\chi^1(\wt K)+4\chi^2(\wt K).$$

A direct computation yields our claim.
\end{proof}

\end{document}